\newtheorem{thm}{Theorem}[section]
\newtheorem{cor}[thm]{Corollary}
\newtheorem{lem}[thm]{Lemma}
\newtheorem{prop}[thm]{Proposition}
\theoremstyle{definition}
\newtheorem{defn}[thm]{Definition}
\newtheorem{rem}[thm]{Remark}
\DeclareMathOperator{\GL}{\mathbf{GL}}
\DeclareMathOperator{\PGL}{\mathbf{PGL}}
\DeclareMathOperator{\Pic}{\mathbf{Pic}}
\newcommand{\Q}{\mathbb Q}
\newcommand{\R}{\mathbb R}
\newcommand{\Z}{\mathbb Z}
\newcommand{\C}{\mathbb C}
\providecommand{\vv}{\vspace{1mm}}
\newif\ifpdf \pdftrue
\begin{document}

\title{Canonical key formula for projective abelian schemes}

\author{Shun Tang}

\date{}

\maketitle

\vspace{-10mm}

\hspace{5cm}\hrulefill\hspace{5.5cm} \vspace{5mm}

\textbf{Abstract.} In this paper we prove a refined version of the canonical key formula for projective abelian schemes in the sense of Moret-Bailly (cf. \cite{MB}), we also extend this discussion to the context of Arakelov geometry. Precisely, let $\pi: A\to S$ be a projective abelian
scheme over a locally noetherian scheme $S$ with unit section $e:
S\to A$ and let $L$ be a symmetric, rigidified, relatively ample
line bundle on $A$. Denote by $\omega_A$ the determinant of the
sheaf of differentials of $\pi$ and by $d$ the rank of the locally
free sheaf $\pi_*L$. In this paper, we shall prove the following
results: (i). there is an isomorphism
\begin{displaymath}
{\rm det}(\pi_*L)^{\otimes 24}\cong (e^*\omega_A^\vee)^{\otimes 12d}
\end{displaymath}
which is canonical in the sense that it is compatible with arbitrary base-change; (ii). if the generic fibre of $S$ is separated and smooth, then there exist positive integer $m$, canonical metrics on $L$ and on
$\omega_A$ such that there exists an isometry
\begin{displaymath}
{\rm det}(\pi_*\overline{L})^{\otimes 2m}\cong
(e^*\overline{\omega}_A^\vee)^{\otimes md}
\end{displaymath}
which is canonical in the sense of (i). Here the constant $m$ only depends on $g,d$ and is independent of $L$.

\textbf{2010 Mathematics Subject Classification:} 14K10, 14K15,
14C40, 14G40


\section{Introduction}
Let $\pi: A\to S$ be a projective abelian scheme with unit section
$e: S\to A$, where $S$ is a normal excellent scheme. Let $L$ be a symmetric, rigidified,
relatively ample line bundle on $A$. It is well known that $L$ is
$\pi$-acyclic and $\pi_*L$ is a locally free coherent sheaf on $S$
(cf. \cite[Proposition 6.13]{MFK}). We denote by $d$ the rank of
$\pi_*L$. Moreover, denote by $\omega_A$ the determinant of the
sheaf of differentials of $\pi$. In this situation, Moret-Bailly proves that there exist a positive integer $m$ and an isomorphism
\begin{displaymath}
{\rm det}(\pi_*L)^{\otimes 2m}\cong (e^*\omega_A^\vee)^{\otimes md}
\end{displaymath}
of line bundles on $S$ (cf. \cite[Appendice 2, 1.1]{MB}). If we
write
\begin{displaymath}
\Delta(L):={\rm det}(\pi_*L)^{\otimes 2}\otimes (e^*\omega_A)^{\otimes d},
\end{displaymath}
then Moret-Bailly's result states that $\Delta(L)$ has a torsion class in the Picard group ${\rm Pic}(S)$. This is so called the key formula for projective abelian schemes, and it is denoted by ${\rm FC^{ab}}(S,g,d)$. When $S$ is a scheme which is quasi-projective over an affine noetherian scheme and $d$ is invertible on $S$, the fact that $\Delta(L)$ is a torsion line bundle is a consequence of the Grothendieck-Riemann-Roch theorem. This was
shown by Moret-Bailly and Szpiro in \cite[Appendice 2, 1.3, 1.4]{MB} and also
by Chai in his thesis \cite[V, Theorem 3.1, p. 209]{Chai1}.

Now, fixing $g,d$, we consider all such data $(A/S,L)$ in the category of locally northerian schemes, it is natural to ask if there exists canonical choice of the isomorphism
\begin{displaymath}
\alpha_L: {\rm det}(\pi_*L)^{\otimes 2m}\cong (e^*\omega_A^\vee)^{\otimes md}
\end{displaymath}
such that it is compatible with arbitrary base-change. That means if we are given a Cartesian diagram
\begin{displaymath}
\xymatrix{ A\times_S S' \ar[r]^-{p_A} \ar[d]^{\pi\times_S {\rm
id}_{S'}} & A \ar[d]^\pi \\
S' \ar[r]^f & S,}
\end{displaymath} then we always have $f^*\alpha_L=\alpha_{(p_A^*L)}$. Moret-Bailly shows in \cite[Chapitre VIII, Th\'{e}or\`{e}me 3.2]{MB} that this is true when $d$ is invertible on $S$. This is so called the canonical key formula for projective abelian schemes, and it is denoted by ${\rm FCC^{ab}}({\rm Spec}\Z[1/d],g,d)$.

\textbf{Question A:} Is there a canonical key formula for projective abelian schemes without restriction on $d$, namely ${\rm FCC^{ab}}({\rm Spec}\Z,g,d)$?

Another direction is to look for the order of $\Delta(L)$ in the Picard group ${\rm Pic}(S)$. When $S$ is a scheme which is quasi-projective over an affine noetherian scheme, Chai and
Faltings prove the following result (cf. \cite[Theorem 5.1, p. 25]{FC}).

\begin{thm}\label{101}(Chai-Faltings)
There is an isomorphism ${\rm det}(\pi_*L)^{\otimes 8d^3}\cong
(e^*\omega_A^\vee)^{\otimes 4d^4}$ of line bundles on $S$.
\end{thm}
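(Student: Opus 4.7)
The starting point will be Moret-Bailly's theorem recalled above: the line bundle $\Delta(L):={\rm det}(\pi_*L)^{\otimes 2}\otimes (e^*\omega_A)^{\otimes d}$ is torsion in ${\rm Pic}(S)$, so some $\Delta(L)^{\otimes m}$ is trivial; the goal is to exhibit a specific $m$ dividing $4d^3$. This is equivalent to producing the isomorphism ${\rm det}(\pi_*L)^{\otimes 8d^3}\cong (e^*\omega_A^\vee)^{\otimes 4d^4}$ directly.

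First I would reduce to a universal setting. Because both sides of the desired isomorphism are defined functorially in the data $(A/S,L)$, and the hypothesis that $S$ is quasi-projective over an affine noetherian base permits the usual limit arguments, it suffices to verify the identity over a regular, finite-type $\Z$-scheme $S^{\rm univ}$ carrying the universal polarized abelian scheme of type $(g,d)$ with enough rigidifying level structure. On such a base ${\rm Pic}(S^{\rm univ})\otimes\Q$ is a tractable target for Chern-class calculations, and the statement descends from $S^{\rm univ}$ to $S$ through the base-change functoriality built into both sides of the formula.

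The heart of the argument is a Grothendieck-Riemann-Roch computation on $\pi:A\to S^{\rm univ}$. Since the relative tangent bundle is trivial in the form $T_\pi\cong \pi^*e^*T_\pi$, the relative Todd class factors through $\pi^*$ and the projection formula collapses GRR to
$${\rm ch}(\pi_*L)\;=\;\pi_*\bigl(\exp c_1(L)\bigr)\cdot{\rm Td}(e^*T_\pi).$$
Extracting the degree-$1$ component and using the fibrewise identity $\pi_*(c_1(L)^g/g!)=d$ yields a rational equality relating $2c_1({\rm det}\,\pi_*L)+d\,c_1(e^*\omega_A)$ to a push-forward of the form $\pi_*(c_1(L)^{g+1}/(g+1)!)$. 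To control this residual term I would invoke the polarization isogeny $\phi_L:A\to A^\vee$ of degree $d^2$ together with Mumford's identity $(1,\phi_L)^*P\cong m^*L\otimes p_1^*L^{-1}\otimes p_2^*L^{-1}$; after pushing forward along $\pi\times\phi_L$ and using symmetry of $L$ to rewrite the top class $c_1(L)^{g+1}$ in terms of the pull-back of the Poincar\'e bundle, the residual term becomes $d^2$-divisible in ${\rm Pic}(S^{\rm univ})$, yielding that $4d^3$-th powers of $\Delta(L)$ are trivial.

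The main obstacle I expect is integrality bookkeeping: GRR a priori lives in ${\rm Pic}\otimes\Q$, so promoting the computation to an honest isomorphism with the sharp exponent $4d^3$ requires carefully tracking the denominators coming from the Todd class, the degree $d^2$ of $\phi_L$, and the initial squaring in $\Delta(L)$. A secondary subtlety is the absence of a regularity hypothesis on the final $S$ in the theorem statement; as in Moret-Bailly's own treatment, the resolution is to prove that the universal isomorphism is canonical (compatible with arbitrary base change) and then deduce the general case by pulling back along $S\to S^{\rm univ}$.
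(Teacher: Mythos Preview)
This theorem is not proved in the paper. It is stated with attribution to Chai and Faltings, citing \cite[Theorem~5.1, p.~25]{FC}, and serves only as background for the paper's own sharper results (Theorems~\ref{102} and~\ref{103}). There is therefore no proof in the paper against which to compare your proposal.

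As a side remark on the proposal itself: the GRR route you sketch is essentially what the paper, in the paragraph following Theorem~\ref{101}, attributes to Moret-Bailly--Szpiro and to Chai for establishing that $\Delta(L)$ is torsion when $d$ is invertible on $S$. Your mechanism for extracting the specific exponent $4d^3$, however, is not fully worked out. The claim that the residual class $\pi_*\bigl(c_1(L)^{g+1}/(g+1)!\bigr)$ becomes ``$d^2$-divisible'' after passing through the polarization isogeny is asserted rather than argued, and it is not clear from your outline how the factors $4$, $d$, and $d^2$ assemble to give exactly $4d^3$ rather than some other power of $d$. If you want to reconstruct the Chai--Faltings bound you should consult \cite{FC} directly; their argument is organized around the isogeny $\lambda(L)$ and theta-group considerations rather than a bare GRR computation.
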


This is to say that $4d^3$ is an upper bound of the order $\Delta(L)$ in ${\rm Pic}(S)$. Later, Chai and Faltings' result was refined by Polishchuk in \cite{Pol}. He has proved that there exists a constant $N(g)$,
which depends only on the relative dimension $g$ of $A$ over $S$,
killing $\Delta(L)$ in ${\rm Pic}(S)$. And he has also given
various bounds for $N(g)$, which depend on $d$, on $g$ and on the
residue characteristics of $S$.

In a recent work \cite{MR}, Maillot and R\"{o}ssler made a great
progress in looking for the order of $\Delta(L)$. They prove the
following results.

\begin{thm}\label{102}(Maillot-R\"{o}ssler)
(i). There is an isomorphism $\Delta(L)^{\otimes
12}\cong \mathcal{O}_S$.

(ii). For every $g\geq1$, there exist data $\pi: A\to S$ and $L$
as above such that ${\rm dim}(A/S)=g$ and such that $\Delta(L)$ is
of order $12$ in the Picard group of $S$.
\end{thm}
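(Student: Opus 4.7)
The plan is to universalize, reduce part (i) to a Picard-group computation on a moduli stack of polarized abelian schemes, and invoke the classical order-$12$ relation on the moduli stack of elliptic curves to produce the factor of $12$.

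First, I would set up the universal situation. Let $\mathcal{M}_{g,d}^{\rm sym}$ be the Deligne-Mumford stack parametrizing data $(\pi: A \to S,\, L)$ with $L$ symmetric, rigidified, relatively ample of degree $d$ on an abelian scheme of relative dimension $g$. Since the assignment $(A/S, L) \mapsto \Delta(L)$ is compatible with arbitrary base change, it defines a universal line bundle $\Delta^{\rm univ}$ on $\mathcal{M}_{g,d}^{\rm sym}$ whose pullback along a classifying morphism $S \to \mathcal{M}_{g,d}^{\rm sym}$ recovers $\Delta(L)$. Part (i) is thus equivalent to $(\Delta^{\rm univ})^{\otimes 12} \cong \mathcal{O}$ in ${\rm Pic}(\mathcal{M}_{g,d}^{\rm sym})$, and by Moret-Bailly's theorem $\Delta^{\rm univ}$ is already known to be torsion; the task is to pin the order down to a divisor of $12$.

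Second, I would relate $\Delta^{\rm univ}$ to the Hodge line bundle $\lambda := \det e^*\Omega^1_{A/S}$ pulled back along the forgetful morphism $\mathcal{M}_{g,d}^{\rm sym} \to \mathcal{A}_g$. The central input is the classical identity on $\mathcal{M}_{1,1}$: the modular discriminant gives a nowhere-vanishing section of $\lambda^{\otimes 12}$, so $\lambda$ has order exactly $12$ in ${\rm Pic}(\mathcal{M}_{1,1})$. Bootstrapping from $g=1$ to arbitrary $g$ can be done by restricting to one-parameter families of split abelian schemes $E \times A'$ with $E$ a varying elliptic curve and $A'$ fixed, using the additivity $\lambda_{E \times A'} = \lambda_E \otimes \lambda_{A'}$ and the multiplicativity of the key formula for $\Delta$ under products. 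The main obstacle is to carry this out integrally over ${\rm Spec}\,\Z$ without inverting $d$, since existing Picard-group computations on $\mathcal{A}_g$ are typically performed over $\C$ or after inverting small primes; controlling the torsion at residue characteristics dividing $d$ likely requires a toroidal-boundary or degeneration argument to reduce to the smooth generic-fibre case where the discriminant identity applies.

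For part (ii), the example is dictated by the $g=1$ computation: take $S = \mathcal{M}_{1,1}$, let $A \to S$ be the universal elliptic curve, and $L = \mathcal{O}(d \cdot e(S))$. A direct computation using Moret-Bailly's identification of $\Delta(L)$ with a specific power of $\lambda$ shows that $\Delta(L)$ has order exactly $12$ in ${\rm Pic}(S)$. For $g > 1$ one simply takes the product of this family with any fixed polarized abelian scheme of dimension $g - 1$ and uses the multiplicativity of $\Delta$ under products to transport the order-$12$ obstruction.
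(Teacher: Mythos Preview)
The paper does not actually prove Theorem~\ref{102}: it is quoted from \cite{MR} as an external input and then \emph{used} in Section~3 on the universal family over $\widetilde{H}_{g,d,1}$. So there is no ``paper's own proof'' to compare against. For the record, the method of Maillot--R\"ossler (and its arithmetic analogue carried out in Section~4 here) is quite different from your outline: it never passes through ${\rm Pic}(\mathcal{M}_{1,1})$, but instead applies the Adams--Riemann--Roch theorem to $\pi_*L$, using $\psi^{k^2}(L)=L^{k^2}\cong[k]^*L$ for symmetric rigidified $L$ to obtain $(k^2-1)\Delta(L)=0$ in ${\rm Pic}(S)[1/k]$; specializing to $k=2,3$ then kills $\Delta(L)$ by a divisor of $24$, and a refinement brings this down to $12$.

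Your proposal for (i) has a genuine gap. Restricting $\Delta^{\rm univ}$ to one-parameter families of split abelian schemes $E\times A'$, with $E$ varying over $\mathcal{M}_{1,1}$ and $A'$ fixed, tests $\Delta^{\rm univ}$ only along a very thin locus of $\mathcal{M}^{\rm sym}_{g,d}$; it says nothing about the order of $\Delta^{\rm univ}$ on the full stack unless you also show that restriction to these loci is injective on the relevant torsion in the Picard group, which you have not argued and which is precisely the hard direction. The order-$12$ fact on $\mathcal{M}_{1,1}$ supplies the \emph{lower} bound---and indeed your sketch for (ii) is essentially how one produces the example---but it cannot by itself yield the upper bound $12\cdot\Delta^{\rm univ}=0$ for non-split families. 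The issue you flag as ``the main obstacle'' (integrality at primes dividing $d$) is secondary; the real obstacle is that product families do not constrain $\Delta$ away from the split locus. For (ii), note also that $\mathcal{M}_{1,1}$ is a stack, not a scheme, so to meet the hypotheses you must pass to a scheme presentation and check the order does not drop there.
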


Now, suppose that the generic fibre of $S$
is separated and smooth, then $S$ can be viewed as an ``arithmetic scheme" in the sense of Gillet-Soul\'{e} and $A(\C)$ is a family of abelian varieties over $S(\C)$. So it is an interesting problem that studying the trivialization of some power of $\Delta(L)$ in an arithmetic sense according to the theory of Arakelov geometry. To be more precise, notice that given a K\"{a}hler fibration structure on $\pi_\C: A(\C)\to S(\C)$, any hermitian metric on $L_\C$ induces a canonical metric on the determinant bundle ${\rm det}(\pi_*L)$ i.e. the Quillen metric on the determinant (cf. Section 4.1, below). Moreover, this K\"{a}hler fibration structure implies a hermitian metric on $\Omega_\pi$, then we will get a hermitian metric on $\Delta(L)$. It will be denoted by $\Delta(\overline{L})$ the hermitian line bundle obtained in such a way.

Let us fix $g,d$ and consider all data $(A/S,L)$ such that the generic fibre of $S$ is separated and smooth, it is natural to ask the following.

\textbf{Question B:} Are there canonical metric
on $L$ and canonical K\"{a}hler fibration structure on $\pi_\C: A(\C)\to S(\C)$ such that $\Delta(\overline{L})$ has a torsion class in the arithmetic Picard group $\widehat{{\rm Pic}}(S)$?

If the answer is YES, then we get an arithmetic canonical key formula for projective abelian schemes in the context of Arakelov geometry, which can be denoted by ${\rm \widehat{FCC}^{ab}}({\rm Spec}\Z,g,d)$.

The aim of this paper is to give positive answers to \textbf{Question A} and \textbf{Question B}, actually we provide a refinement of the canonical key formula for projective abelian schemes by indicating an explicit upper bound of the order of $\Delta(L)$. Our main theorem is the following.

\begin{thm}\label{103}
Let $A/S$ be a projective abelian scheme over a locally noetherian
scheme $S$ and let $L$ be a symmetric, rigidified, relatively
ample line bundle on $A$. Then

(i). there is a trivialization of $\Delta(L)^{\otimes
12}$ which is canonical in the sense that it is compatible with
arbitrary base-change;

(ii). if the generic fibre of $S$ is separated and smooth, then there exist a positive integer $m$,
a canonical metric on $L$ and a canonical K\"{a}hler fibration structure on
$\pi_\C: A(\C)\to S(\C)$ such that there exists an isometry $\Delta(\overline{L})^{\otimes
m}\cong\overline{\mathcal{O}}_S$ which is canonical in the sense
of (i). Here the constant $m$ only depends on $g,d$ and is independent of $L$.
\end{thm}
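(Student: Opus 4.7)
My plan is to deduce both parts from universal constructions on the moduli stack $\mathcal{M}_{g,d}$ of pairs $(\pi: A\to S, L)$ of the kind considered, combined with the trivialization of Maillot-R\"{o}ssler (Theorem 1.2(i)) and Moret-Bailly's canonicity statement valid when $d$ is invertible on $S$.

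For part (i), the universal line bundle $\Delta(L_u)^{\otimes 12}$ on $\mathcal{M}_{g,d}$ is trivial by Theorem 1.2(i), so any two trivializations differ by a global unit. Over the open substack $\mathcal{M}_{g,d}[1/d]$ where $d$ is invertible, Moret-Bailly's theorem singles out a canonical trivialization of some power $\Delta(L_u)^{\otimes m_0}$; combined with the Maillot-R\"{o}ssler trivialization of $\Delta(L_u)^{\otimes 12}$, it pins down a distinguished trivialization of $\Delta(L_u)^{\otimes 12}$ over that open. To extend it across the closed locus consisting of residue characteristics dividing $d$ I would invoke the normality of $\mathcal{M}_{g,d}$: a global unit on a normal Deligne-Mumford stack is determined by its restriction to any dense open, so the trivialization extends uniquely. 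Pulling back via the classifying morphism $S\to \mathcal{M}_{g,d}$ then yields the canonical trivialization for arbitrary $(A/S,L)$, and functoriality in base-change is automatic by construction.

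For part (ii), I would first install the canonical metric $h_L$ on $L$ (the cubical, translation-invariant metric, uniquely determined up to a root of unity by the symmetry, rigidification, and a positivity normalization) together with the canonical K\"{a}hler fibration on $\pi_\C$ whose fiberwise form is $c_1(L, h_L)$. These induce a hermitian metric on $\omega_A$ and, via Bismut-Gillet-Soul\'{e}, the Quillen metric on $\det(\pi_*L)$, producing $\Delta(\overline{L})$. Applying the canonical algebraic trivialization from (i) to $\Delta(L)^{\otimes 12}$, its norm $\phi$ with respect to $\Delta(\overline{L})^{\otimes 12}$ is a smooth positive function on $S(\C)$ that by canonicity depends only on the classifying map to $\mathcal{M}_{g,d}(\C)$. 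Using the symmetry and translation-invariance of the canonical choices together with the Bismut-Gillet-Soul\'{e} curvature formula, one expects $\partial\bar\partial\log\phi=0$, so $\phi$ is locally constant; taking an appropriate further power $m$ (depending only on $g, d$) then yields the desired isometry $\Delta(\overline{L})^{\otimes m}\cong \overline{\mathcal{O}}_S$.

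The main obstacle will be part (ii), specifically showing that the locally constant $\phi$ can actually be absorbed by taking a power, i.e.\ that its value on each connected component is a root of unity (or $1$). This requires precise control of the Quillen metric on $\det(\pi_*L)$, which in turn depends on the Ray-Singer analytic torsion of ample line bundles on complex abelian varieties. Here one expects to invoke the Bismut-Vasserot asymptotic formula or K\"{o}hler's explicit computations for complex tori, compared with the characteristic forms on $\mathcal{M}_{g,d}(\C)$. Part (i), by contrast, is a formal descent statement that reduces essentially to the known results of Moret-Bailly and Maillot-R\"{o}ssler.
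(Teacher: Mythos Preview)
Your approach to part (i) has a genuine gap. You write that ``a global unit on a normal Deligne--Mumford stack is determined by its restriction to any dense open, so the trivialization extends uniquely.'' Normality gives you injectivity of the restriction map on units, hence uniqueness of an extension \emph{if one exists}; it does not give existence. The locus where $d$ is not invertible has codimension one in the base, so Hartogs-type extension does not apply. In effect you are assuming that Moret-Bailly's canonical trivialization over $\mathcal{M}_{g,d}[1/d]$ extends across the primes dividing $d$, which is precisely the content of the theorem you are trying to prove. Also, invoking Maillot--R\"{o}ssler on a stack is delicate: their result is stated for schemes which are quasi-projective over an affine noetherian scheme, so you would need to pass to an atlas and then descend, which requires exactly the kind of equivariance the paper establishes.

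The paper's route for (i) is different and avoids Moret-Bailly's canonicity result entirely. Section~2 constructs an actual quasi-projective \emph{scheme} $\widetilde{H}_{g,d,1}$ over $\Z$, representing data $(A/S,L,\text{linear rigidification})$; this is substantial (representability of a $G$-torsor over Mumford's $H_{g,d,1}$, where $G$ is the $2$-torsion of the dual abelian scheme). One then chooses an \emph{arbitrary} trivialization of $\Delta(L_Z)^{\otimes 12}$ on $\widetilde{H}_{g,d,1}$ via Maillot--R\"{o}ssler, and the key point (Lemma~3.1) is that every line bundle on $\widetilde{H}_{g,d,1}$ carries at most one $\PGL_N$-structure, using flatness over $\Z$ and smoothness over $\Q$. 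Hence the chosen trivialization is automatically $\PGL_N$-equivariant, which is exactly what is needed to glue over $S$ and make it base-change compatible.

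Your part (ii) also has a gap, which you yourself identify. Even granting that $\phi$ is locally constant on the moduli space, its value is a priori an arbitrary positive real number $c$; passing to $\Delta(\overline{L})^{\otimes m}$ replaces $c$ by $c^{m/12}$, which is never $1$ unless $c=1$. Roots of unity play no role here since $\phi$ is a norm, not a section. So ``taking a further power'' cannot absorb the constant, and you are forced back onto an explicit analytic torsion computation which you do not carry out. The paper avoids this entirely: it works directly in $\widehat{K_0}$ and $\widehat{\mathrm{Pic}}$, applies the arithmetic Adams--Riemann--Roch theorem to $\psi^{k^2}(\pi_*\overline{L})$, and then proves the crucial comparison $\det_Q(\pi_*([k]^*\overline{L}))=k^{2g}\det_Q(\pi_*\overline{L})$ in $\widehat{\mathrm{Pic}}_\Q$ by analyzing $[k]_*\overline{\mathcal{O}}$ via the $\gamma$-filtration (Lemmas~4.10--4.12, Proposition~4.13). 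This yields $(k^2-1)\Delta(\overline{L})=0$ in $\widehat{\mathrm{Pic}}[1/k]$, and playing off $k=2$ against $k=3$ gives a torsion bound depending only on $g,d$.
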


As a byproduct of (i) of this main theorem, the condition of
quasi-projectivity on $S$ in Theorem~\ref{102} can be removed. We
also indicate that our main theorem can be viewed as a
generalization of Moret-Bailly's work \cite{MB1} where he
considered the case $d=1$.

The strategy we use to prove the first part of
Theorem~\ref{103} is the representability of a moduli functor
classifying projective abelian schemes with some additional
structures, see Section 2 below for details. And the key input in
the proof of (ii) of Theorem~\ref{103} is an arithmetic
Adams-Riemann-Roch theorem in the context of Arakelov geometry,
see \cite{Roe} or below.

\textbf{Acknowledgements.} The author wishes to thank Damian
R\"{o}ssler for providing such an interesting topic, also for his
constant encouragement and for many fruitful discussions between
them. The author is greatful to Jilong Tong for his suggestions on
the proof of a crucial lemma in this paper.

\section{Moduli functors classifying projective abelian schemes}
Until the end of this paper, all schemes will be locally
noetherian. Let $S$ be a scheme, a group scheme $\pi: A\to S$ is
called an abelian scheme if $\pi$ is smooth and proper, and the
geometric fibres of $\pi$ are connected. A basic fact (by the
rigidity lemma) is that every abelian scheme is commutative.

\subsection{Mumford's moduli functor $\mathcal{H}_{g,d,n}$}
\begin{defn}\label{201}
Let $A/S$ be an abelian scheme with unit section $e: S\to A$.\vv

(i). A line bundle $L$ on $A$ is said to be rigidified if $e^*L$
is isomorphic to $\mathcal{O}_S$;\vv

(ii). The relative Picard functor $\Pic(A/S)$ is the functor which
sends an $S$-scheme $T$ to the set of isomorphism classes of
rigidified line bundles on $A\times_S T$ with respect to the unit
section $e_T:=e\times_S {\rm id}_T$;\vv

(iii). The subfunctor $\Pic^0(A/S)$ of $\Pic(A/S)$ is the functor
which sends an $S$-scheme $T$ to the set of isomorphism classes of
rigidified line bundles $L$ on $A\times_S T$ with respect to $e_T$
such that for $\forall t\in T$, $L\otimes k(t)$ is algebraically
equivalent to zero on $A_t$.\vv
\end{defn}

We remark that if $A$ is projective over $S$, then $\Pic(A/S)$ is
represented by a separated $S$-scheme which is locally of finite
type over $S$ and $\Pic^0(A/S)$ is represented by a projective
abelian scheme over $S$. In this case, we shall call $\Pic^0(A/S)$
the dual abelian scheme of $A/S$ and denote it by $A^\vee/S$.\vv

Now let $L$ be a rigidified line bundle on a
projective abelian scheme $A/S$ with unit section $e$. Let $m:
A\times_S A\to A$ be the group law, $p_1,p_2: A\times_S A\to A$ be
the first and the second projection respectively. Consider the
line bundle
\begin{displaymath}
\widetilde{L}:=m^*(L)\otimes p_1^*(L)^{-1}\otimes p_2^*(L)^{-1}
\end{displaymath}
on $A\times_S A$. Regarding $A\times_S A$ as an abelian scheme
over $A$ with the unit section $e\times_S {\rm id}_A$, then
$\widetilde{L}$ is rigidified and for any $a\in A$,
$\widetilde{L}$ is algebraically equivalent to zero on the fibre
$A_a$ which is an abelian variety. So $\widetilde{L}$ induces an
$S$-morphism from $A$ to $A^\vee$, it is actually a group
homomorphism. We denote this homomorphism by $\lambda(L)$.

\begin{defn}\label{202}
Let $\pi: A\to S$ be a projective abelian scheme. A polarization
of $A$ is an $S$-homomorphism
\begin{displaymath}
\lambda: A\to A^\vee
\end{displaymath}
such that, for any geometric point ${\bar s}$ of $S$, the induced
morphism $\lambda_{\bar s}$ is of the form $\lambda(L_{\bar s})$
where $L_{\bar s}$ is an ample line bundle on $A_{\bar s}$.
\end{defn}

If $\lambda: A\to A^\vee$ is a polarization, $\lambda$ is finite
and faithfully flat. The pull-back of the Poincar\'{e} line bundle
along $({\rm id}_A,\lambda): A\to A\times_S A^\vee$ is a
symmetric, rigidified and relatively ample line bundle
$L^\Delta(\lambda)$ such that
$\lambda(L^\Delta(\lambda))=2\lambda$ (cf. \cite[Chapter I,
1.6]{FC} and \cite[Chapter 6, $\S$2]{MFK}).

\begin{lem}\label{203}
Let $L$ be a rigidified, relatively ample line bundle on a
projective abelian scheme $A/S$. Then the line bundle
$L^\Delta(\lambda(L))$ is canonically isomorphic to $L\otimes
[-1]^*L$.
\end{lem}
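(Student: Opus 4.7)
The plan is to unwind the definition of $L^\Delta(\lambda(L))$ in terms of the Poincar\'{e} bundle, reduce it to pulling back $\widetilde{L}$ along the diagonal, and then finish by a direct application of the theorem of the cube for the rigidified line bundle $L$.

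First I would recall that $\lambda(L): A\to A^\vee$ is by construction the morphism classifying the rigidified line bundle $\widetilde{L}=m^*L\otimes p_1^*L^{-1}\otimes p_2^*L^{-1}$ on $A\times_S A$, viewed as an abelian scheme over its first factor. The universal property of the Poincar\'{e} line bundle $\mathcal{P}$ therefore furnishes a canonical isomorphism of rigidified line bundles
\begin{displaymath}
({\rm id}_A\times\lambda(L))^*\mathcal{P}\cong\widetilde{L}
\end{displaymath}
on $A\times_S A$. Factoring the morphism $({\rm id}_A,\lambda(L)):A\to A\times_S A^\vee$ as the diagonal $\Delta: A\to A\times_S A$ followed by ${\rm id}_A\times\lambda(L)$, I obtain from the definition of $L^\Delta(\lambda(L))$ a canonical identification
\begin{displaymath}
L^\Delta(\lambda(L))=({\rm id}_A,\lambda(L))^*\mathcal{P}\cong\Delta^*\widetilde{L}.
\end{displaymath}

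Second, I would compute $\Delta^*\widetilde{L}$ explicitly. Since $m\circ\Delta=[2]$ and $p_i\circ\Delta={\rm id}_A$ for $i=1,2$, we have
\begin{displaymath}
\Delta^*\widetilde{L}\cong [2]^*L\otimes L^{-1}\otimes L^{-1}=[2]^*L\otimes L^{\otimes(-2)}.
\end{displaymath}
Third, I would invoke the theorem of the cube (or equivalently the theorem of the square) for the rigidified line bundle $L$ on the abelian scheme $A/S$, which gives the canonical isomorphism $[2]^*L\cong L^{\otimes 3}\otimes[-1]^*L$. Combining these two steps yields
\begin{displaymath}
L^\Delta(\lambda(L))\cong L^{\otimes 3}\otimes[-1]^*L\otimes L^{\otimes(-2)}\cong L\otimes[-1]^*L,
\end{displaymath}
as claimed.

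The only real subtlety is canonicity: the statement is not just that some isomorphism exists but that the one we construct is natural. This is handled by performing every step in the category of rigidified line bundles, where a morphism is determined by its restriction along the unit section. The rigidifications of $[n]^*L$, $[-1]^*L$, $m^*L$ and the $p_i^*L$ are all induced from the rigidification of $L$ via the obvious identifications with $e^*L$, so the Poincar\'{e} isomorphism, the computation of $\Delta^*\widetilde{L}$, and the theorem of the cube isomorphism are each canonically determined. I expect this bookkeeping — rather than any geometric content — to be the main point that has to be checked carefully, but no additional ingredient beyond the universal property of $\mathcal{P}$ and the cube theorem is needed.
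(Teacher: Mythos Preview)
Your proposal is correct and follows essentially the same route as the paper: identify $({\rm id}_A\times\lambda(L))^*\mathcal{P}$ with $\widetilde{L}$, pull back along the diagonal to get $[2]^*L\otimes L^{-2}$, and conclude via the theorem of the cube. Your explicit discussion of canonicity through rigidifications is a welcome elaboration of what the paper leaves implicit in the word ``canonically''.
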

\begin{proof}
By the construction of $\lambda(L)$, the pull-back of the
Poincar\'{e} line bundle along ${\rm id}_A\times_S\lambda:
A\times_S A\to A\times_S A^\vee$ is isomorphic to $m^*(L)\otimes
p_1^*(L)^{-1}\otimes p_2^*(L)^{-1}$. Denote by $\Delta: A\to
A\times_S A$ the diagonal morphism and by $[n]: A\to A$ the
homomorphism of multiplication by $n$, then we have
\begin{align*}
L^\Delta(\lambda(L))&\cong \Delta^*\big(m^*(L)\otimes
p_1^*(L)^{-1}\otimes p_2^*(L)^{-1}\big)\\
&=[2]^*L\otimes L^{-2}\\
&\cong L\otimes [-1]^*L.
\end{align*}
The last isomorphism follows from the theorem of the cube.
\end{proof}

\begin{defn}\label{204}
Let $\pi: A\to S$ be an abelian scheme with unit section $e$. Let
$n$ be a positive integer. Assume that $A/S$ has relative
dimension $g$ and that the characteristics of the residue fields
of all $s\in S$ do not divide $n$. Then if $n\geq 2$, a
level-$n$-structure on $A/S$ is a set of $2g$ sections
$\sigma_1,\sigma_2,\ldots,\sigma_{2g}$ of $\pi$ such that\vv

(i). for all geometric points ${\bar s}$ of $S$, the images
$\sigma_i({\bar s})$ form a basis for the group of points of order
$n$ on the fibre $A_{\bar s}$;\vv

(ii). $[n]\circ \sigma_i=e$ for $i=1,2,\ldots,2g$.\vv
\\
It is convenient to call $A/S$ by itself a level-$1$-structure.
\end{defn}

\begin{defn}\label{205}
Let $g,d,n$ be three positive integers. The moduli functor
$\mathcal{A}_{g,d,n}$ is the contravariant functor from the
category of schemes to the category of sets which sends any scheme
$S$ to the set of isomorphism classes of the following data:\vv

(i). a projective abelian scheme $A$ over $S$ of relative
dimension $g$;\vv

(ii). a polarization $\lambda: A\to A^\vee$ of degree $d^2$, i.e.
$\lambda_*(\mathcal{O}_A)$ is locally free of rank $d^2$;\vv

(iii). a level-$n$-structure of $A$ over $S$.\vv
\\
We say that $(A/S,\lambda)$ is isomorphic to $(A'/S,\lambda')$ if
there exists an $S$-isomorphism of abelian schemes $\gamma: A\to
A'$ which induces an $S$-isomorphism of abelian schemes
$\gamma^\vee: A'^\vee\to A^\vee$, such that
$\lambda=\gamma^\vee\circ \lambda'\circ \gamma$. If
$\mathcal{A}_{g,d,n}$ is represented by a scheme $A_{g,d,n}$, then
$A_{g,d,n}$ will be called a fine moduli scheme.
\end{defn}

Let $\pi: A\to S$ be a projective abelian scheme of relative
dimension $g$, and let $\lambda: A\to A^\vee$ be a polarization of
$A/S$ of degree $d^2$. Then $\pi_*(L^\Delta(\lambda)^3)$ is a
locally free sheaf on $S$ of rank $6^g\cdot d$ (cf. \cite[Prop.
6.13]{MFK}). In this case, a linear rigidification of $A/S$
associated to $\lambda$ is an $S$-isomorphism
$\mathbb{P}(\pi_*(L^\Delta(\lambda)^3))\cong
\mathbb{P}_S^{6^g\cdot d-1}$.

\begin{defn}\label{206}
Let $g,d,n$ be three positive integers. The moduli functor
$\mathcal{H}_{g,d,n}$ is the contravariant functor from the
category of schemes to the category of sets which sends any scheme
$S$ to the set of isomorphism classes of the following data:\vv

(i). a projective abelian scheme $A$ over $S$ of relative
dimension $g$;\vv

(ii). a polarization $\lambda: A\to A^\vee$ of degree $d^2$;\vv

(iii). a level-$n$-structure of $A$ over $S$;\vv

(iv). a linear rigidification
$\mathbb{P}(\pi_*(L^\Delta(\lambda)^3))\cong
\mathbb{P}_S^{6^g\cdot d-1}$.\vv
\end{defn}

\begin{lem}\label{207}
Suppose that we have an $S$-isomorphism of polarized projective
abelian schemes $\gamma: (A/S,\lambda)\cong (A'/S,\lambda')$, then
$\gamma^*L^\Delta(\lambda')$ is canonically isomorphic to
$L^\Delta(\lambda)$.
\end{lem}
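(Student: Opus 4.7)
The plan is to derive the lemma from the fundamental functoriality of the Poincaré bundle, together with the compatibility $\lambda=\gamma^\vee\circ\lambda'\circ\gamma$ satisfied by the two polarizations. The underlying input is the standard fact that for any $S$-homomorphism $f:B\to B'$ of projective abelian schemes, the dual $f^\vee:B'^\vee\to B^\vee$ is characterized by the existence of a canonical isomorphism
\begin{equation*}
(f\times \mathrm{id}_{B'^\vee})^*\mathcal{P}_{B'}\cong (\mathrm{id}_B\times f^\vee)^*\mathcal{P}_B
\end{equation*}
on $B\times_S B'^\vee$, where $\mathcal{P}_B,\mathcal{P}_{B'}$ denote the Poincaré bundles with their standard rigidifications along the unit sections.

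First, I would rewrite the two sides as pullbacks of Poincaré bundles. By definition $L^\Delta(\lambda')=(\mathrm{id}_{A'},\lambda')^*\mathcal{P}_{A'}$, so
\begin{equation*}
\gamma^*L^\Delta(\lambda')=(\gamma,\,\lambda'\circ\gamma)^*\mathcal{P}_{A'}.
\end{equation*}
Set $\delta:=(\gamma^{-1})^\vee:A^\vee\to A'^\vee$, which is an isomorphism. The polarization compatibility $\lambda=\gamma^\vee\circ\lambda'\circ\gamma$, combined with $\gamma^\vee\circ\delta=(\gamma^{-1}\circ\gamma)^\vee=\mathrm{id}_{A^\vee}$, shows that $\lambda'\circ\gamma=\delta\circ\lambda$. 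Therefore the map $(\gamma,\lambda'\circ\gamma):A\to A'\times_S A'^\vee$ factors as
\begin{equation*}
(\gamma,\lambda'\circ\gamma)=(\gamma\times\delta)\circ(\mathrm{id}_A,\lambda).
\end{equation*}

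Next, I would apply Poincaré functoriality to the isomorphism $\gamma:A\to A'$. Factoring $\gamma\times\delta=(\gamma\times\mathrm{id}_{A'^\vee})\circ(\mathrm{id}_A\times\delta)$ and using the functoriality isomorphism recalled above for the morphism $\gamma$, one obtains canonically
\begin{equation*}
(\gamma\times\delta)^*\mathcal{P}_{A'}\cong(\mathrm{id}_A\times\delta)^*(\mathrm{id}_A\times\gamma^\vee)^*\mathcal{P}_A=(\mathrm{id}_A\times(\gamma^\vee\circ\delta))^*\mathcal{P}_A=\mathcal{P}_A.
\end{equation*}
Pulling back by $(\mathrm{id}_A,\lambda)$ then yields the desired canonical isomorphism $\gamma^*L^\Delta(\lambda')\cong L^\Delta(\lambda)$.

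The only delicate point is to check that all identifications respect the rigidifications along the unit sections, so that the resulting isomorphism is really canonical and not just one defined up to a unit in $\Gamma(S,\mathcal{O}_S^\times)$. This is where I would be most careful: the Poincaré functoriality isomorphism in the first display is canonical precisely because of the rigidification of $\mathcal{P}_B$ along $\{e_B\}\times B^\vee$ and $B\times\{e_{B^\vee}\}$, and the factorization in the third paragraph preserves the unit section on the nose since $\gamma$ and $\delta$ are group homomorphisms. Once this bookkeeping is verified, the construction of the isomorphism is manifestly functorial in $(A/S,\lambda,\gamma)$, which is exactly the content of the lemma.
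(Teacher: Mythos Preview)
Your proof is correct and follows essentially the same approach as the paper: both arguments unwind the definition of $L^\Delta(\cdot)$ as a pullback of the Poincar\'e bundle and then use the canonical isomorphism $(\gamma\times\mathrm{id}_{A'^\vee})^*\mathcal{P}_{A'}\cong(\mathrm{id}_A\times\gamma^\vee)^*\mathcal{P}_A$ together with the compatibility $\lambda=\gamma^\vee\circ\lambda'\circ\gamma$. The only cosmetic difference is that the paper computes both $\gamma^*L^\Delta(\lambda')$ and $L^\Delta(\lambda)$ and shows they coincide with the common expression $\Delta_A^*(\gamma\times(\lambda'\circ\gamma))^*\mathcal{P}_{A'}$, whereas you introduce $\delta=(\gamma^{-1})^\vee$ and transform one side directly into the other; the substance is identical.
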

\begin{proof}
We only need to show that $\gamma^*L^\Delta(\lambda')\cong
L^\Delta(\gamma^\vee\circ \lambda'\circ \gamma)$ because
$\lambda=\gamma^\vee\circ \lambda'\circ \gamma$. Let
$\mathcal{P}'$ be the Poincar\'{e} line bundle on $A'\times_S
A'^\vee$, then by definition we have
\begin{align*}
\gamma^*L^\Delta(\lambda')=&\gamma^*\Delta_{A'}^*({\rm
id}_{A'}\times \lambda')^*(\mathcal{P}')\\
=&\Delta_A^*(\gamma\times\gamma)^*({\rm id}_{A'}\times
\lambda')^*(\mathcal{P}')\\
=&\Delta_A^*({\rm id}_{A'}\times
\lambda'\circ\gamma\times\gamma)^*(\mathcal{P}')\\
=&\Delta_A^*\big(\gamma\times
(\lambda'\circ\gamma)\big)^*(\mathcal{P}')
\end{align*}
On the other hand, we recall the definition of $\gamma^\vee$, it
is the scheme morphism corresponding to the natural transformation
$\alpha: {\rm Pic}^0(A'/S)\to {\rm Pic}^0(A/S)$. For any
$S$-scheme $T$, $\alpha(T)$ sends the rigidified line bundles on
$A'\times_S T$ to $A\times_S T$ by doing pull-back along
$\gamma\times {\rm id}_T$. Hence $\gamma^\vee$ corresponds to the
rigidified line bundle $(\gamma\times {\rm
id}_{A'^\vee})^*(\mathcal{P}')$ on $A\times_S A'^\vee$. Let
$\mathcal{P}$ be the Poincar\'{e} line bundle on $A\times_S
A^\vee$, then by its universal property we have $(\gamma\times
{\rm id}_{A'^\vee})^*(\mathcal{P}')\cong ({\rm id}_A\times
\gamma^\vee)^*(\mathcal{P})$. Therefore
\begin{align*}
L^\Delta(\gamma^\vee\circ \lambda'\circ
\gamma)=&\Delta_A^*\big({\rm id}_A\times (\gamma^\vee\circ
\lambda'\circ \gamma)\big)^*(\mathcal{P})\\
=&\Delta_A^*\big({\rm id}_A\times (\lambda'\circ
\gamma)\big)^*({\rm id}_A\times \gamma^\vee)^*(\mathcal{P})\\
\cong&\Delta_A^*\big({\rm id}_A\times (\lambda'\circ
\gamma)\big)^*(\gamma\times {\rm id}_{A'^\vee})^*(\mathcal{P}')\\
=&\Delta_A^*\big(\gamma\times
(\lambda'\circ\gamma)\big)^*(\mathcal{P}')
\end{align*}
So we are done.
\end{proof}

\begin{thm}\label{208}(Mumford)
For any positive integers $g,d,n$, the moduli functor
$\mathcal{H}_{g,d,n}$ is represented by a quasi-projective scheme
$H_{g,d,n}$ over $\Z$.
\end{thm}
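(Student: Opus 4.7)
The plan is to realize $\mathcal{H}_{g,d,n}$ as a locally closed subfunctor of a suitable Hilbert scheme over $\Z$ and so to obtain representability and quasi-projectivity in one stroke. The decisive observation is that the linear rigidification (iv) removes all remaining automorphisms of the parametrized objects. Indeed, $L^\Delta(\lambda)$ is symmetric, rigidified and relatively ample; $L^\Delta(\lambda)^3$ is then relatively very ample by Mumford's theorem on abelian schemes, and, once the rigidification (iv) is chosen, one obtains a canonical closed $S$-immersion $A\hookrightarrow\mathbb{P}_S^{6^g\cdot d - 1}$ with a fibrewise Hilbert polynomial $P\in\Q[T]$ depending only on $g$ and $d$.

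We thus work inside the Hilbert scheme $\mathcal{H}:=\mathrm{Hilb}^{P}\bigl(\mathbb{P}_{\Z}^{6^g\cdot d - 1}\bigr)$, which is projective over $\Z$ by Grothendieck's construction. Inside $\mathcal{H}$ we first cut out the locally closed locus over which the universal family $\mathcal{Z}\to \mathcal{H}$ is smooth and proper with geometrically connected fibres of dimension $g$; this uses openness of smoothness together with the locally constant behaviour of fibre dimension and connectedness. We then fix a rational point $o\in\mathbb{P}_{\Z}^{6^g\cdot d - 1}(\Z)$ and restrict to the sublocus where $o$ factors through the smooth locus of $\mathcal{Z}$. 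By the rigidity lemma such a pair carries at most one structure of abelian scheme with origin $o$, and the existence of such a structure is representable via the representability of the Hom functor for flat proper schemes. To encode (ii), we use the relative Picard functor $\Pic(\mathcal{Z}/\mathcal{H})$ and the dual abelian scheme $\mathcal{Z}^\vee$, imposing the locally closed condition that there exists an $\mathcal{H}$-homomorphism $\lambda:\mathcal{Z}\to \mathcal{Z}^\vee$ of degree $d^2$ together with a rigidified isomorphism $\mathcal{O}(1)|_{\mathcal{Z}}\cong L^\Delta(\lambda)^3$. Finally, a level-$n$-structure cuts out an open subscheme of $(\mathcal{Z}[n])^{2g}$ over the locus $\mathrm{Spec}\,\Z[1/n]$ on which $n$ is invertible.

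The main obstacle is the coupling between the projective embedding and the polarization: one has to verify that the clause ``$L^\Delta(\lambda)^3$ is isomorphic to $\mathcal{O}(1)|_{\mathcal{Z}}$ in a way compatible with the rigidifications'' is representable by a locally closed subscheme in a base-change compatible way. This will be handled by the fact that, for two rigidified line bundles $M_1,M_2$ on a projective abelian scheme, the functor of rigidified isomorphisms from $M_1$ to $M_2$ is representable by a scheme affine over the base, and the locus where a global section exists is locally closed. Combining all the cuts produces a locally closed subscheme of $\mathcal{H}$ (over $\mathrm{Spec}\,\Z[1/n]$ when $n\geq 2$) representing $\mathcal{H}_{g,d,n}$; as a locally closed subscheme of a projective $\Z$-scheme, it is quasi-projective over $\Z$, which proves the theorem.
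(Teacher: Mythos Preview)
The paper's own proof is simply the one-line citation ``This is \cite[Proposition 7.3]{MFK}'', so there is nothing to compare at the level of detail you provide; your Hilbert-scheme sketch is precisely the strategy Mumford carries out in that reference (embed via $L^\Delta(\lambda)^3$, work inside $\mathrm{Hilb}^P(\mathbb{P}^{N-1}_\Z)$, and carve out the abelian-scheme, polarization, and level conditions as locally closed loci). Your proposal is therefore correct and in essence identical to the argument behind the paper's citation.
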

\begin{proof}
This is \cite[Proposition 7.3]{MFK}.
\end{proof}

\subsection{The $\PGL_N$-structure on the universal abelian scheme of $\mathcal{H}_{g,d,n}$}
Define $N$ to be the integer $6^g\cdot d$, then the group scheme
$\PGL_N$ has an action on the moduli functor $\mathcal{H}_{g,d,n}$
by transforming the linear rigidification. Hence by Yoneda lemma,
$H_{g,d,n}$ admits a $\PGL_N$-action. Similarly, the universal
abelian scheme $Z_{g,d,n}$ over $H_{g,d,n}$ also admits a
$\PGL_N$-action because it represents the functor of linearly
rigidified polarized projective abelian schemes with
level-$n$-structure, and with one extra section. Although this is
a well-known fact, we don't know a reference for its proof, so we
include one.

\begin{prop}\label{209}
Let $\mathcal{Z}_{g,d,n}$ be the moduli functor from the category
of schemes to the category of sets which sends any scheme $S$ to
the set of isomorphism classes of the following data:\vv

(i). a projective abelian scheme $A$ over $S$ of relative
dimension $g$;\vv

(ii). a polarization $\lambda: A\to A^\vee$ of degree $d^2$;\vv

(iii). a level-$n$-structure of $A$ over $S$;\vv

(iv). a linear rigidification
$\mathbb{P}(\pi_*(L^\Delta(\lambda)^3))\cong
\mathbb{P}_S^{6^g\cdot d-1}$;\vv

(v). a section $\epsilon: S\to A$.\vv
\\
Then $\mathcal{Z}_{g,d,n}$ is represented by the universal abelian
scheme $Z_{g,d,n}$ of $\mathcal{H}_{g,d,n}$.
\end{prop}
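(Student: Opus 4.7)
The plan is to exhibit a natural isomorphism $\mathcal{Z}_{g,d,n}\simeq h_{Z_{g,d,n}}$ by combining Theorem~\ref{208} with the universal property of the fibre product, thereby reducing the representability of $\mathcal{Z}_{g,d,n}$ to the universal property of the family $Z_{g,d,n}\to H_{g,d,n}$.

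First I would describe the bijection on $S$-points. Given a tuple $\xi=(A/S,\lambda,\{\sigma_i\},\phi,\epsilon)$ in $\mathcal{Z}_{g,d,n}(S)$, I forget $\epsilon$ to obtain an element of $\mathcal{H}_{g,d,n}(S)$, and Theorem~\ref{208} produces a unique classifying morphism $f_\xi:S\to H_{g,d,n}$ under which $(A,\lambda,\{\sigma_i\},\phi)$ becomes the pullback of the universal data. In particular there is a canonical $S$-isomorphism $A\cong S\times_{H_{g,d,n}}Z_{g,d,n}$, and since $\epsilon:S\to A$ is a section of $\pi$, the universal property of the fibre product identifies $\epsilon$ with a unique morphism $\widetilde{\epsilon}:S\to Z_{g,d,n}$ satisfying $\pi_{Z/H}\circ\widetilde{\epsilon}=f_\xi$. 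Conversely, given any $\widetilde{\epsilon}:S\to Z_{g,d,n}$, composition with $\pi_{Z/H}$ yields an element of $\mathcal{H}_{g,d,n}(S)$, and the pair $(\mathrm{id}_S,\widetilde{\epsilon})$ supplies a section of the pulled-back family. The two constructions are inverse by the uniqueness clause in the universal property of fibre products, and naturality in $S$ is automatic because both Theorem~\ref{208} and the formation of $S\times_{H_{g,d,n}}Z_{g,d,n}$ are compatible with base change.

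The only real obstacle is verifying that the assignment $\xi\mapsto\widetilde{\epsilon}$ respects the equivalence relation on $\mathcal{Z}_{g,d,n}(S)$. If $\gamma:(A,\lambda,\{\sigma_i\},\phi,\epsilon)\cong(A',\lambda',\{\sigma_i'\},\phi',\epsilon')$ is an isomorphism of tuples, I must check that the two tuples give rise to the same classifying morphism to $H_{g,d,n}$ and hence to the same lift to $Z_{g,d,n}$. Lemma~\ref{207} provides a canonical isomorphism $\gamma^*L^\Delta(\lambda')\cong L^\Delta(\lambda)$, so the linear rigidifications $\phi$ and $\phi'$ are identified under $\gamma$, and the underlying elements of $\mathcal{H}_{g,d,n}(S)$ coincide; the compatibility $\gamma\circ\epsilon=\epsilon'$ then forces $\widetilde{\epsilon}=\widetilde{\epsilon'}$ by uniqueness in the universal property of the fibre product. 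Once this compatibility is confirmed, the natural transformation $\mathcal{Z}_{g,d,n}\to h_{Z_{g,d,n}}$ is a well-defined isomorphism of functors, and the proposition follows from Yoneda's lemma.
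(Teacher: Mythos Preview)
Your overall strategy matches the paper's: both identify morphisms $S\to Z_{g,d,n}$ with pairs consisting of a classifying map $S\to H_{g,d,n}$ together with a section of the pulled-back universal family. The paper runs the argument in the opposite direction, defining $h:\mathrm{Hom}(\cdot,Z_{g,d,n})\to\mathcal{Z}_{g,d,n}$ via the diagonal section and then verifying injectivity and surjectivity, but that is a cosmetic difference.

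There is, however, a genuine gap. You write that ``there is a canonical $S$-isomorphism $A\cong S\times_{H_{g,d,n}}Z_{g,d,n}$'', and your construction of $\widetilde{\epsilon}$ depends on this identification. But representability of $\mathcal{H}_{g,d,n}$ only tells you that \emph{some} such isomorphism exists; it says nothing about uniqueness. If the object $(A,\lambda,\{\sigma_i\},\phi)$ had a nontrivial automorphism, different choices of identification would send $\epsilon$ to different points of $Z_{g,d,n}$, and the assignment $\xi\mapsto\widetilde{\epsilon}$ would fail to be well-defined even on a single representative, let alone on isomorphism classes. The missing ingredient is Mumford's rigidity argument (the discussion preceding \cite[Prop.~7.5]{MFK}): a linearly rigidified polarized abelian scheme with level structure admits no nontrivial automorphisms. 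The paper invokes this explicitly in both its injectivity and surjectivity steps. You need the same fact to justify the word ``canonical'' and to make your final paragraph go through: the phrase ``by uniqueness in the universal property of the fibre product'' does not suffice, since that uniqueness only pins down $\widetilde{\epsilon}$ \emph{after} the identification $A\cong f_\xi^*Z_{g,d,n}$ has itself been fixed. Once you insert the rigidity statement at the point where you claim canonicity, your argument becomes complete and equivalent to the paper's.
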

\begin{proof}
We first construct a natural transformation $h$ from the functor
${\rm Hom}(\cdot,Z_{g,d,n})$ to the functor $\mathcal{Z}_{g,d,n}$,
and then we prove that $h$ is an isomorphism.

Consider the universal abelian scheme $\pi: Z_{g,d,n}\to
H_{g,d,n}$ of the moduli functor $\mathcal{H}_{g,d,n}$, then the
morphism $\pi$ corresponds to the isomorphism class of the
linearly rigidified polarized projective abelian scheme $p_2:
Z_{g,d,n}\times_{H_{g,d,n}} Z_{g,d,n}\to Z_{g,d,n}$ with
level-$n$-structure. Denote by $\Delta$ the diagonal section
$Z_{g,d,n}\to Z_{g,d,n}\times_{H_{g,d,n}} Z_{g,d,n}$.

For any scheme $U$ and any morphism $f\in {\rm Hom}(U,Z_{g,d,n})$,
we get a morphism $\pi\circ f\in {\rm Hom}(U,H_{g,d,n})$. We
define $h(f)$ to be the isomorphism class of the linearly
rigidified polarized projective abelian scheme $p_2:
Z_{g,d,n}\times_{H_{g,d,n}} U\to U$ which corresponds to the
morphism $\pi\circ f$, with the section $\Delta\times {\rm id}_U$.
This is reasonable because by construction $p_2:
Z_{g,d,n}\times_{H_{g,d,n}} U\to U$ is obtained from
$Z_{g,d,n}\times_{H_{g,d,n}} Z_{g,d,n}\to Z_{g,d,n}$ by
base-change along $f$. It is readily checked that $h$ is actually
a natural transformation from ${\rm Hom}(\cdot,Z_{g,d,n})$ to
$\mathcal{Z}_{g,d,n}$.

For the injectivity of $h$, let $f_1,f_2$ be two morphisms from
$U$ to $Z_{g,d,n}$ such that $h(f_1)=h(f_2)$. Then by the
definition of $h$, there exists an $U$-isomorphism from
$Z_{g,d,n}\times_{H_{g,d,n}} Z_{g,d,n}\times_{f_1} U$ to
$Z_{g,d,n}\times_{H_{g,d,n}} Z_{g,d,n}\times_{f_2} U$ compatible
with all of their structures. This $U$-isomorphism induces an
$U$-isomorphism $\delta$ from $Z_{g,d,n}\times_{\pi\circ f_1} U$
to $Z_{g,d,n}\times_{\pi\circ f_2} U$ compatible with all of their
structures. Hence $\pi\circ f_1=\pi\circ f_2$ and $\delta$ is
actually the identity map because such $U$-automorphism of
$Z_{g,d,n}\times_{H_{g,d,n}} U$ is already uniquely determined if
we forget about the structure of one extra section (cf. the
argument given before \cite[Prop. 7.5]{MFK}). This implies that
the two sections $\epsilon_1$ and $\epsilon_2$ from $U$ to
$Z_{g,d,n}\times_{H_{g,d,n}} U$ induced by $\Delta\times_{f_1}{\rm
id}_U$ and $\Delta\times_{f_2}{\rm id}_U$ respectively must be
equal. Now, consider the following Cartesian diagram
\begin{displaymath}
\xymatrix{Z_{g,d,n}\times_{H_{g,d,n}} Z_{g,d,n}\times_{f_1} U
\ar[r]^-{p_3} \ar[d]_{p_{12}} & U \ar[d]^{f_1} \\
Z_{g,d,n}\times_{H_{g,d,n}} Z_{g,d,n} \ar[r]^-{p_2} & Z_{g,d,n},}
\end{displaymath}
we have $p_2\circ p_{12}=f_1\circ p_3$ which implies that
$p_{12}=\Delta\circ f_1\circ p_3$ and hence $p_{12}\circ
\Delta\times_{f_1}{\rm id}_U=\Delta\circ f_1$. Since $\Delta$ is
also a section of $p_1: Z_{g,d,n}\times_{H_{g,d,n}} Z_{g,d,n}\to
Z_{g,d,n}$, so we get $p_1\circ p_{12}\circ \Delta\times_{f_1}{\rm
id}_U=f_1$. Notice that $p_1\circ p_{12}\circ
\Delta\times_{f_1}{\rm id}_U$ is indeed $p_1\circ \epsilon_1$,
hence $f_1=p_1\circ \epsilon_1$. Similarly we have $f_2=p_1\circ
\epsilon_2$, this finally implies that $f_1=f_2$ because we
already know that $\epsilon_1$ is equal to $\epsilon_2$.

For the surjectivity of $h$, let $X/U$ be a linearly rigidified
polarized projective abelian scheme with level-$n$-structure and
with one extra section $\epsilon: U\to X$. Forgetting about the
section $\epsilon: U\to X$, we get a morphism $l: U\to H_{g,d,n}$
since $H_{g,d,n}$ is a fine moduli scheme. Hence we may identify
$X/U$ with $p_2: Z_{g,d,n}\times_{H_{g,d,n}}U\to U$ with all of
their structures, the section $\epsilon: U\to X$ induces a section
$U\to Z_{g,d,n}\times_{H_{g,d,n}}U$ which is still denoted by
$\epsilon$. Define $f=p_1\circ \epsilon: U\to Z_{g,d,n}$, then it
is clear that $\pi\circ f=l$. We want to show that $h(f)$ is
exactly the isomorphism class of $p_2:
Z_{g,d,n}\times_{H_{g,d,n}}U\to U$ with the extra section
$\epsilon$. Denote by $\epsilon'$ the section of $p_2:
Z_{g,d,n}\times_{H_{g,d,n}}U\to U$ induced by the section
$\Delta\times_f{\rm id}_U$, we only need to show that
$\epsilon'=\epsilon$ since $\pi\circ f=l$ which implies the
compatibilities of other structures. In fact, in the proof of the
injectivity of $h$ we have already known that $f=p_1\circ
\epsilon'$. But $f=p_1\circ \epsilon$ follows from the definition,
so the equality $\pi\circ f=l$ immediately implies that
$\epsilon'$ and $\epsilon$ are both equal to the morphism $f\times
{\rm id}_U$. So we are done.
\end{proof}

Via Yoneda lemma, the morphism $\pi: Z_{g,d,n}\to H_{g,d,n}$
corresponds to a morphism of functors from ${\rm
Hom}(\cdot,Z_{g,d,n})$ to ${\rm Hom}(\cdot,H_{g,d,n})$. By the
construction of $h$, this functor morphism is exactly the one from
$\mathcal{Z}_{g,d,n}$ to $\mathcal{H}_{g,d,n}$ forgetting about
the structure of one extra section. Therefore, the morphism $\pi:
Z_{g,d,n}\to H_{g,d,n}$ is naturally $\PGL_N$-equivariant.\vv

Next, we investigate possible $\PGL_N$-structures on
quasi-coherent sheaves on $Z_{g,d,n}$ and $H_{g,d,n}$. Let $G$ be
a group scheme and let $X$ be a scheme, recall that an action of
$G$ on $X$ is a morphism $m_X: G\times X\to X$ which satisfies
certain properties of compatibility (cf. \cite{Ko}). Let $F$ be a
quasi-coherent sheaf on $X$, a $G$-action on $F$ is an isomorphism
of $\mathcal{O}_{G\times X}$-modules $\phi: m_X^*F\cong p_2^*F$
which satisfies the following cocycle condition on $G\times
G\times X$:
\begin{displaymath}
(p_{23}^*\phi)\circ\big(({\rm id}_G\times
m_X)^*\phi\big)=(m_G\times {\rm id}_X)^*\phi
\end{displaymath}
where $m_G: G\times G\to G$ is the multiplication of $G$. Since we
defined the $\PGL_N$-structures on $Z_{g,d,n}$ and on $H_{g,d,n}$
in a functorial way via Yoneda lemma, it is helpful to introduce
the following functorial description of the group structures of
quasi-coherent sheaves.

\begin{thm}\label{213}
Let $F$ be a quasi-coherent sheaf on $X$, then to give a $G$-structure on $F$
is equivalent to give a family of $\mathcal{O}_A$-module isomorphisms $\{\phi^A_{g,x}: (gx)^*F\cong x^*F\}$ for each affine scheme $A$ over $X$ and for all $A$-valued points $g\in G(A), x\in X(A)$, such that

(i). $\phi^A_{g,x}\circ \phi^A_{g',gx}=\phi^A_{g'g,x}$;\vv

(ii). for any $X$-morphism of affine schemes $f: B\to A$, $\phi^A_{f\circ g,f\circ x}$ is the pull-back of $\phi^A_{g,x}$ along the morphism $f$;\vv
\end{thm}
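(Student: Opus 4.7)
The plan is to unwind both sides of the equivalence via the Yoneda philosophy and exhibit them as mutually inverse constructions. A $G$-action on $F$ is a single isomorphism $\phi: m_X^*F \cong p_2^*F$ of sheaves on $G\times X$ satisfying a cocycle identity on $G\times G\times X$, while the family $\{\phi^A_{g,x}\}$ is its translation into the language of affine $A$-valued points; since isomorphisms of quasi-coherent sheaves are determined by their restrictions to any affine open cover, the two data should be interchangeable, and the cocycle condition should translate precisely into condition (i).

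Starting from $\phi$, I would define $\phi^A_{g,x}$, for an affine $A$ over $X$ with points $g\in G(A)$ and $x\in X(A)$, to be the pullback of $\phi$ along the morphism $(g,x): A\to G\times X$, using the identifications $m_X\circ(g,x) = gx$ and $p_2\circ(g,x) = x$. Condition (ii) is then immediate from the functoriality of pullback. For condition (i), one pulls back the cocycle identity along $(g',g,x): A\to G\times G\times X$; the three maps appearing in the cocycle compose with $(g',g,x)$ to give $({\rm id}_G\times m_X)\circ(g',g,x) = (g',gx)$, $p_{23}\circ(g',g,x) = (g,x)$, and $(m_G\times{\rm id}_X)\circ(g',g,x) = (g'g,x)$, which translates the cocycle identity directly into $\phi^A_{g,x}\circ\phi^A_{g',gx} = \phi^A_{g'g,x}$.

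Conversely, starting from the family $\{\phi^A_{g,x}\}$, I would cover $G\times X$ by affine opens $U_i$; the inclusion $U_i\hookrightarrow G\times X$ composed with the two projections provides canonical $U_i$-valued points $g_i\in G(U_i)$, $x_i\in X(U_i)$, so $\phi^{U_i}_{g_i,x_i}$ is an isomorphism $m_X^*F|_{U_i}\cong p_2^*F|_{U_i}$. Condition (ii), applied to inclusions of affine opens of $U_i\cap U_j$ into $U_i$ and $U_j$, forces these local isomorphisms to agree on overlaps and hence glue to a global $\phi: m_X^*F\cong p_2^*F$. The cocycle identity on $G\times G\times X$ is recovered by the same affine-covering trick combined with condition (i), and a routine check shows the two constructions are mutually inverse. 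The only mildly delicate point is the gluing, but it is essentially automatic given the Zariski-local nature of quasi-coherent sheaves, so I do not expect any serious obstacle.
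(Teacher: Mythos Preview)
Your proposal is correct and follows essentially the same route as the paper's proof: pull back $\phi$ along $(g,x):A\to G\times X$ to define $\phi^A_{g,x}$, and conversely glue the local isomorphisms $\phi^{U_i}_{g_i,x_i}$ over an affine open cover of $G\times X$ using condition (ii), with the cocycle condition recovered from condition (i). Your verification of condition (i) via pulling back along $(g',g,x)$ is in fact slightly more explicit than what the paper writes.
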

\begin{proof}
Suppose that we are given a $G$-structure on $F$, which is an
isomorphism $\phi: m_X^*F\cong p_2^*F$ satisfying certain property
of associativity (a cocycle condition). Let $A$ be an affine scheme over
$X$, for any $A$-valued points $g\in G(A)$ and $x\in X(A)$ we have
a morphism $u: A\to G\times X$. Then $gx\in X(A)$ is
the morphism $m_X\circ u$. We define $\phi^A_{g,x}:
(gx)^*F\cong x^*F$ to be the isomorphism which
is the pull-back of $\phi$ along the morphism $u$. It is readily
checked that this assignment satisfies the conditions (i) and (ii).

Conversely, suppose that we are given an assignment satisfying conditions (i) and (ii).
We choose an open affine covering $\{A_i\}_{i\in I}$ of $G\times X$, the natural embedding $u_i: A_i\to G\times X$
gives $A$-valued points $g_i\in G(A_i)$ and $x_i\in X(A)$. Then $\phi^{A_i}_{g_i,x_i}$ provides an isomorphism $u_i^*m_X^*F\cong u_i^*p_2^*F$. These isomorphisms can be glued to get a global isomorphism $\phi: m_X^*F\cong p_2^*F$ because of (ii), and $\phi$ satisfies the cocycle condition because of (i). So we get a
$G$-structures on $F$.
\end{proof}

Now, we can describe the $\PGL_N$-structure on the line bundle
$L^\Delta(\lambda)$ where $\lambda$ is the universal polarization
of the universal abelian scheme $Z_{g,d,n}$. Let $A$ be an affine
scheme over $Z_{g,d,n}$. Let $g\in \PGL_N(A)$ and $u\in
Z_{g,d,n}(A)$ be two $A$-valued points. We shall denote $gu$ by
$v$ for convenience. Then we need to give an isomorphism
$\phi^A_{v,u}: v^*L^\Delta(\lambda)\cong u^*L^\Delta(\lambda)$.
Consider the following Cartesian diagram:
\begin{displaymath}
\xymatrix{ Z_{g,d,n} \ar[d]^{\pi} &
Z_{g,d,n}\times_{H_{g,d,n}}Z_{g,d,n} \ar[d]^{p_2} \ar[l]_-{p_1} &
Z_{g,d,n}\times_{H_{g,d,n}}Z_{g,d,n}\times_v A \ar[d]^{p_3}
\ar[l]_{p_{12}} \\
H_{g,d,n} & Z_{g,d,n} \ar[l]_{\pi} & A \ar[l]_{v}.}
\end{displaymath}
In the proof of Proposition~\ref{209}, we showed that
$v^*L^\Delta(\lambda)$ is equal to $(p_1\circ p_{12}\circ
\Delta\times_{v}{\rm id}_A)^*L^\Delta(\lambda)$ where $\Delta$ is
the diagonal section from $Z_{g,d,n}$ to
$Z_{g,d,n}\times_{H_{g,d,n}}Z_{g,d,n}$. Hence
$v^*L^\Delta(\lambda)$ is canonically isomorphic to the pull-back
of the line bundle $L^\Delta(\lambda_v)$ on
$Z_{g,d,n}\times_{H_{g,d,n}}Z_{g,d,n}\times_v A$ along the section
$\Delta\times_{v}{\rm id}_A$. Similarly, $u^*L^\Delta(\lambda)$ is
canonically isomorphic to the pull-back of the line bundle
$L^\Delta(\lambda_u)$ on
$Z_{g,d,n}\times_{H_{g,d,n}}Z_{g,d,n}\times_u A$ along the section
$\Delta\times_{u}{\rm id}_A$. But notice that the action of
$\PGL_N(A)$ on $Z_{g,d,n}(A)$ is just the transformation of linear
rigidifications which doesn't affect the structures of projective
abelian scheme, of polarization, of level-$n$-structure and of the
extra one section. So there exists another projective abelian
scheme $X$ over $A$ with polarization $\lambda_X$,
level-$n$-structure and one extra section $\epsilon_X$ such that
there exist unique $A$-isomorphisms $\eta_v: X\to
Z_{g,d,n}\times_{H_{g,d,n}}Z_{g,d,n}\times_v A$ and $\eta_u: X\to
Z_{g,d,n}\times_{H_{g,d,n}}Z_{g,d,n}\times_u A$ which are
compatible with all of their structures (except the structure of
linear rigidifications!). By Lemma~\ref{207}, we have canonical
isomorphisms $\eta_v^*L^\Delta(\lambda_v)\cong
L^\Delta(\lambda_X)\cong \eta_u^*L^\Delta(\lambda_u)$. Pulling
back these isomorphisms along the section $\epsilon_X$, we finally
get an isomorphism $\phi^A_{v,u}: v^*L^\Delta(\lambda)\cong
u^*L^\Delta(\lambda)$. It is easily seen that this isomorphism
$\phi^A_{v,u}$ is independent of the choice of the representative
$X/A$. Let $g'\in \PGL_N(A)$ be another $A$-valued point. Denote
$g'v$ by $w$, we have to show that $\phi^A_{v,u}\circ
\phi^A_{w,v}=\phi^A_{w,u}$. But this simply follows from the
construction: $\eta_w^*L^\Delta(\lambda_w)\cong
L^\Delta(\lambda_X)\cong
\eta_v^*L^\Delta(\lambda_v)=\eta_v^*L^\Delta(\lambda_v)\cong
L^\Delta(\lambda_X)\cong \eta_u^*L^\Delta(\lambda_u)$ is equal to
$\eta_w^*L^\Delta(\lambda_w)\cong L^\Delta(\lambda_X)\cong
\eta_u^*L^\Delta(\lambda_u)$. At last, $\phi^A_{v,u}$ is clearly
functorial so that we get a $\PGL_N$-action on the geometric
functor $\theta(L^\Delta(\lambda))$ and hence a $\PGL_N$-structure
on $L^\Delta(\lambda)$. The $\PGL_N$-structure constructed like
this way is called the canonical $\PGL_N$-structure. If a
quasi-coherent sheaf on $Z_{g,d,n}$ comes from the data of
structures in the definition of the representable functor
$\mathcal{Z}_{g,d,n}$, then it is compatible with arbitrary
base-change and we call it a universal quasi-coherent sheaf. For
universal quasi-coherent sheaves on $Z_{g,d,n}$, it is always
possible to construct canonical $\PGL_N$-structures. For instance,
we may define the canonical $\PGL_N$-structure on the canonical
sheaf $\omega_{Z_{g,d,n}/{H_{g,d,n}}}$. \vv

Over $H_{g,d,n}$, the line bundle $\Delta(L^\Delta(\lambda))$
admits the canonical $\PGL_N$-structure because it is a universal
bundle. And the structure sheaf $\mathcal{O}_{H_{g,d,n}}$ admits a
natural $\PGL_N$-structure because $H_{g,d,n}$ is
$\PGL_N$-equivariant. Notice that the canonical
$\PGL_N$-structures on $L^\Delta(\lambda)$ and on
$\omega_{Z_{g,d,n}/{H_{g,d,n}}}$ induce a $\PGL_N$-structure on
$\Delta(L^\Delta(\lambda))$ since $\pi: Z_{g,d,n}\to H_{g,d,n}$ is
$\PGL_N$-equivariant. It can be checked that this is exactly the
canonical $\PGL_N$-structure on $\Delta(L^\Delta(\lambda))$. \vv

For any scheme $S$ and for every positive integer $k$, we shall
denote by $S[1/k]$ the scheme $S\times_\Z \{{\rm
Spec}(\Z)-\bigcup_{p\mid k}(p)\}$ obtained by removing the fibres
over $p$ which divides $k$. To end this subsection, we summarize
some facts about $H_{g,d,n}$.

\begin{thm}\label{214}
Let $g,d,n$ be any positive integers.\vv

(i). If $n>6^g\cdot d \cdot\sqrt{g!}$, then the fine moduli scheme
$A_{g,d,n}$ exists and $H_{g,d,n}$ is a $\PGL_N$-torsor over
$A_{g,d,n}$. In this case, $A_{g,d,n}$ is faithfully flat over
${\rm Spec}\Z[1/n]$ and is smooth over ${\rm Spec}\Z[1/{nd}]$.\vv

(ii). $H_{g,d,n}$ is faithfully flat over ${\rm Spec}\Z[1/n]$ and
is smooth over $\Q$.
\end{thm}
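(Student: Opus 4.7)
The plan is to invoke Mumford's geometric invariant theory for the bulk of part (i) and then bootstrap part (ii) from (i) via a finite étale covering by $H_{g,d,mn}$ for $mn$ in the range of (i).

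For part (i), the existence of $A_{g,d,n}$ as a fine moduli scheme when $n > 6^g d \sqrt{g!}$ is Mumford's classical theorem, cf.\ \cite[Theorem 7.9]{MFK}. The numerical bound is a Serre--Minkowski type rigidity estimate, guaranteeing that any nontrivial automorphism of a polarized abelian variety $(A,\lambda)$ of dimension $g$ with $\deg\lambda = d^2$ acts nontrivially on the $n$-torsion, so the automorphism groupoid of $\mathcal{A}_{g,d,n}$ collapses and the functor becomes representable. To see that $H_{g,d,n} \to A_{g,d,n}$ is a $\PGL_N$-torsor, I would argue directly from the definitions: the fibre of the forgetful map over an $S$-valued point of $A_{g,d,n}$ is the set of $S$-isomorphisms $\mathbb{P}(\pi_*(L^\Delta(\lambda)^3)) \cong \mathbb{P}^{N-1}_S$, on which $\PGL_N$ acts simply transitively; since $\pi_*(L^\Delta(\lambda)^3)$ is locally free of rank $N = 6^g d$, this torsor is Zariski-locally trivial on $A_{g,d,n}$. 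Faithful flatness of $A_{g,d,n}$ over ${\rm Spec}\,\Z[1/n]$ then follows from Serre--Tate theory: when $n$ is invertible on the base, level-$n$-structures deform uniquely along infinitesimal deformations of the polarized abelian scheme, and the moduli is known to be flat there; smoothness over ${\rm Spec}\,\Z[1/nd]$ additionally exploits that $\lambda$ is étale exactly when $d$ is invertible, rendering first-order deformations unobstructed.

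For part (ii), the case $n > 6^g d \sqrt{g!}$ is immediate: $H_{g,d,n}$ is a $\PGL_N$-torsor over $A_{g,d,n}$ and $\PGL_N$ is smooth over $\Z$, so the flatness (and smoothness) properties of the base transfer. For arbitrary $n$, the plan is to choose an auxiliary integer $m$ with $\gcd(m,n) = 1$ and $mn > 6^g d\sqrt{g!}$ and consider the change-of-level morphism
\begin{displaymath}
H_{g,d,mn}[1/m] \longrightarrow H_{g,d,n}[1/m]
\end{displaymath}
sending a level-$mn$-structure $\{\sigma_i\}$ to $\{[m]\sigma_i\}$. Over ${\rm Spec}\,\Z[1/m]$, multiplication by $m$ is étale on abelian schemes, so this morphism is finite étale and surjective: the fibre over a fixed level-$n$-structure is a torsor under the étale group scheme $A[m]$ of order $m^{2g}$. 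Since $H_{g,d,mn}$ is faithfully flat over ${\rm Spec}\,\Z[1/mn]$ by (i), faithfully flat descent yields that $H_{g,d,n}[1/m]$ is faithfully flat over ${\rm Spec}\,\Z[1/n]$; varying $m$ over integers coprime to $n$ with $mn$ in the required range covers all of ${\rm Spec}\,\Z[1/n]$. Smoothness over $\Q$ follows verbatim: every prime is invertible, so for any such $m$, part (i) gives that $A_{g,d,mn}$, and hence $H_{g,d,mn}$, is smooth over $\Q$, and étale descent propagates smoothness to $H_{g,d,n}$.

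The main obstacle I foresee is the careful verification that the change-of-level morphism displayed above is a well-defined, finite étale, surjective morphism of moduli functors on ${\rm Spec}\,\Z[1/m]$. This reduces to the étale-local statement that any level-$n$-structure $\{\tau_i\}$ lifts to a level-$mn$-structure $\{\sigma_i\}$ with $[m]\sigma_i = \tau_i$, which holds because $[m]$ is étale surjective when $m$ is invertible on the base, and the scheme of such lifts is a torsor under the finite étale group scheme $A[m]$. The remaining quasi-projectivity and representability assertions of (i) are then packaged in Mumford's GIT construction in \cite[Chapter 7]{MFK}.
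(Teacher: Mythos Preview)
Your overall strategy matches the paper's proof closely: part (i) is cited from \cite[Prop.\ 7.6, Thm.\ 7.9]{MFK} and \cite{Chai}, and part (ii) is deduced by descending along the change-of-level map $H_{g,d,nk}\to H_{g,d,n}[1/k]$, then varying the auxiliary integer to cover ${\rm Spec}\,\Z[1/n]$. The paper invokes Mumford's \cite[Lemma 7.11]{MFK} directly to identify this map as a $\Gamma_n^{(k)}$-torsor, where $\Gamma_n^{(k)}=\ker\bigl(\GL_{2g}(\Z/nk\Z)\to\GL_{2g}(\Z/n\Z)\bigr)$, hence finite \'etale.

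There is one genuine error in your proposal: you claim the fibre of the change-of-level map over a fixed level-$n$-structure is a torsor under the group scheme $A[m]$ of order $m^{2g}$. This is not correct. A level-$mn$-structure is an isomorphism $(\Z/mn\Z)^{2g}\cong A[mn]$, and the lifts of a given level-$n$-structure form a torsor under the constant group $\Gamma_n^{(m)}$, not under $A[m]$; already for $n=1$ this group is $\GL_{2g}(\Z/m\Z)$, whose order is far from $m^{2g}$. Your heuristic ``multiplication by $m$ is \'etale, so the map is finite \'etale'' does not literally justify the torsor structure either. Fortunately the conclusion you need---that the change-of-level map is finite \'etale surjective over $\Z[1/m]$---is exactly what \cite[Lemma 7.11]{MFK} provides, so the argument is easily repaired by citing that lemma instead. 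A minor slip: $H_{g,d,n}[1/m]$ is faithfully flat over ${\rm Spec}\,\Z[1/mn]$, not $\Z[1/n]$; your covering argument then works as stated.
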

\begin{proof}
The statements in (i) are the contents of \cite[Prop. 7.6, Thm.
7.9]{MFK} and \cite[Thm 1.4 (a)]{Chai}. We prove (ii). Let $k$ be
a positive integer, suppose that $\sigma_1,\ldots,\sigma_{2g}$ is
a level-$nk$-structure on a projective abelian scheme $A/S$. Then
$k\sigma_1,\ldots,k\sigma_{2g}$ is a level-$n$-structure on $A/S$.
This defines a morphism $p_n^{(k)}$ from $\mathcal{H}_{g,d,nk}$ to
$\mathcal{H}_{g,d,n}$, and hence from $H_{g,d,nk}$ to $H_{g,d,n}$.
Moreover let $\Gamma_n$ be the group $\GL_{2g}(\Z/{n\Z})$, then
$\Gamma_n$ has a natural action on $\mathcal{H}_{g,d,n}$ and hence
on $H_{g,d,n}$. Indeed, for any element $T=(a_{i,j})\in \Gamma_n$
and any level-$n$-structure $\sigma_1,\ldots,\sigma_{2g}$ on
$A/S$, the set of sections
$\sum_{j=1}^{2g}a_{1,j}\sigma_j,\ldots,\sum_{j=1}^{2g}a_{2g,j}\sigma_j$
is also a level-$n$-structure. There is a canonical morphism from
$\Gamma_{nk}$ to $\Gamma_n$, we denote its kernel by
$\Gamma_n^{(k)}$. In \cite[Lemma 7.11]{MFK}, Mumford proved that
$p_n^{(k)}: H_{g,d,nk}\to H_{g,d,n}[1/k]$ is a
$\Gamma_n^{(k)}$-torsor and $p_n^{(k)}$ is actually a finite
\'{e}tale morphism.

Now, let $g,d,n$ be any positive integers. According to (i), we
may take an integer $k$ big enough so that $A_{g,d,nk}$ exists and
$H_{g,d,nk}$ is a $\PGL_N$-torsor over $A_{g,d,nk}$. So
$H_{g,d,nk}$ is faithfully flat over ${\rm Spec}\Z[1/{nk}]$.
Together with the faithfully flatness of $p_n^{(k)}$, we know that
$H_{g,d,n}[1/k]$ is faithfully flat over ${\rm Spec}\Z[1/{nk}]$.
Replacing $k$ by another integer big enough which is prime to $k$,
we finally obtain that $H_{g,d,n}$ is faithfully flat over ${\rm
Spec}\Z[1/n]$. For the smoothness, firstly note that
$H_{g,d,nk}\times_\Z\Q$ is smooth over $A_{g,d,nk}\times_\Z\Q$
since $\PGL_N$ is smooth over $\Q$, hence the generic fibre of
$H_{g,d,nk}$ is smooth by (i) i.e. the generic fibre of
$H_{g,d,nk}$ is regular. Again by the faithfully flatness of
$p_n^{(k)}$, we know that the generic fibre of $H_{g,d,n}$ is
regular. Therefore $H_{g,d,n}$ is smooth over $\Q$.
\end{proof}

\subsection{A variant of Mumford's moduli functor
$\widetilde{\mathcal{H}}_{g,d,n}$} In this subsection, we shall
introduce a variant of Mumford's moduli functor
$\widetilde{\mathcal{H}}_{g,d,n}$ which classifies linearly
rigidified projective abelian schemes with level-$n$-structure,
and with a symmetric, rigidified ample line bundle.

\begin{defn}\label{215}
Let $g,d,n$ be three positive integers. The moduli functor
$\widetilde{\mathcal{H}}_{g,d,n}$ is the contravariant functor
from the category of schemes to the category of sets which sends
any scheme $S$ to the set of isomorphism classes of the following
data:\vv

(i). a projective abelian scheme $\pi: A\to S$ of relative
dimension $g$, with unit section $e$;\vv

(ii). a symmetric, rigidified and relatively ample line bundle $L$
on $A$ such that the rank of the vector bundle $\pi_*(L)$ is
$d$;\vv

(iii). a rigidification $e^*L\cong \mathcal{O}_S$;\vv

(iv). a level-$n$-structure of $A$ over $S$;\vv

(v). a linear rigidification $\mathbb{P}(\pi_*(L^6))\cong
\mathbb{P}_S^{6^g\cdot d-1}$.
\end{defn}

\begin{lem}\label{216}
Let $(A/S,L)$ and $(A'/S,L')$ be two projective abelian schemes
over $S$ equipped with symmetric, rigidified ample line bundles.
Suppose that there exists an $S$-isomorphism $\gamma: A\to A'$
which induces an $S$-isomorphism $\gamma^\vee: A'^\vee\to A^\vee$
such that $L\cong \gamma^*L'$. Then $\lambda(L)$ is equal to
${\gamma^\vee}\circ \lambda(L')\circ \gamma$.
\end{lem}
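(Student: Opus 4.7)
The plan is to invoke Yoneda for the dual abelian scheme $A^\vee=\Pic^0(A/S)$ and translate the claimed equality of $S$-homomorphisms into an equality of rigidified line bundles on $A\times_S A$. By the construction recalled just before Definition~\ref{202}, $\lambda(L)$ is the unique $S$-morphism $A\to A^\vee$ such that $(\mathrm{id}_A\times\lambda(L))^*\mathcal{P}_A\cong \widetilde{L}$ as a rigidified line bundle on $A\times_S A$, where $\mathcal{P}_A$ denotes the Poincar\'{e} line bundle on $A\times_S A^\vee$ and $\widetilde{L}=m^*L\otimes p_1^*L^{-1}\otimes p_2^*L^{-1}$. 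So it suffices to produce a canonical isomorphism
\[
\bigl(\mathrm{id}_A\times(\gamma^\vee\circ\lambda(L')\circ\gamma)\bigr)^*\mathcal{P}_A\cong \widetilde{L}
\]
of rigidified line bundles on $A\times_S A$.

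First I would move the pullback past $\gamma^\vee$ and $\gamma$. The identity $(\gamma\times\mathrm{id}_{A'^\vee})^*\mathcal{P}_{A'}\cong(\mathrm{id}_A\times\gamma^\vee)^*\mathcal{P}_A$ used in the proof of Lemma~\ref{207} yields
\[
\bigl(\mathrm{id}_A\times(\gamma^\vee\circ\lambda(L')\circ\gamma)\bigr)^*\mathcal{P}_A\cong(\gamma\times\gamma)^*(\mathrm{id}_{A'}\times\lambda(L'))^*\mathcal{P}_{A'}\cong(\gamma\times\gamma)^*\widetilde{L'}.
\]
It then remains to identify $(\gamma\times\gamma)^*\widetilde{L'}$ with $\widetilde{L}$. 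Since $\gamma$ induces $\gamma^\vee$, it preserves the unit section and is therefore a homomorphism of abelian schemes, so it intertwines the group laws: $m_{A'}\circ(\gamma\times\gamma)=\gamma\circ m_A$ and $p_i^{A'}\circ(\gamma\times\gamma)=\gamma\circ p_i^A$ for $i=1,2$. Combined with the hypothesis $L\cong\gamma^*L'$, a direct expansion of $\widetilde{L'}$ then gives $(\gamma\times\gamma)^*\widetilde{L'}\cong\widetilde{L}$.

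The only subtle point, which I expect to be the main obstacle, is purely bookkeeping: one has to check that the composite isomorphism constructed above respects the rigidifications along $e\times\mathrm{id}_A$, so that the universal property of $\mathcal{P}_A$ applies and produces an equality of morphisms rather than a coincidence up to some $A$-valued automorphism of $A^\vee$. This is automatic because $\gamma$ preserves unit sections and because the isomorphism $L\cong\gamma^*L'$ can be chosen compatible with the rigidifications of $L$ and $L'$. Assembling the chain of isomorphisms and invoking Yoneda for $\Pic^0(A/S)$ then yields the required equality $\lambda(L)=\gamma^\vee\circ\lambda(L')\circ\gamma$.
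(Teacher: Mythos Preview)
Your argument is correct and follows essentially the same approach as the paper's proof: both reduce the equality of morphisms to an isomorphism of rigidified line bundles via the universal property of the Poincar\'{e} bundle, using the identity $(\gamma\times\mathrm{id}_{A'^\vee})^*\mathcal{P}_{A'}\cong(\mathrm{id}_A\times\gamma^\vee)^*\mathcal{P}_A$ from Lemma~\ref{207} together with the fact that $\gamma$ is a group homomorphism. The only cosmetic difference is that the paper compares $\lambda(L')\circ\gamma$ with ${\gamma^\vee}^{-1}\circ\lambda(\gamma^*L')$ as morphisms $A\to A'^\vee$ (hence via line bundles on $A'\times_S A$), whereas you compare $\gamma^\vee\circ\lambda(L')\circ\gamma$ with $\lambda(L)$ as morphisms $A\to A^\vee$ (via line bundles on $A\times_S A$); these are trivially equivalent rearrangements of the same computation.
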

\begin{proof}
We only need to show that $\lambda(L')\circ
\gamma={\gamma^\vee}^{-1}\circ \lambda(\gamma^*L')$ because $L$ is
isomorphic to $\gamma^*L'$. Consider the following diagram at
first:
\begin{displaymath}
\xymatrix{ A \ar[r]^-{\gamma} & A' \ar[rr]^-{\lambda(L')} &&
A'^\vee .}
\end{displaymath}
By the universal property of $A'^\vee$, the composition
$\lambda(L')\circ \gamma$ corresponds to the rigidified line
bundle $({\rm id}_{A'}\times \gamma)^*(m_{A'}^*L'\otimes
p_{1A'}^*L'^{-1}\otimes p_{2A'}^*L'^{-1})$ on $A'\times_S A$.

On the other hand, consider the following diagram:
\begin{displaymath}
\xymatrix{ A \ar[rr]^-{\lambda(\gamma^*L')} && A^\vee
\ar[r]^-{{\gamma^\vee}^{-1}} & A'^\vee .}
\end{displaymath}
Recall that for any $S$-scheme $T$, the morphism
${\gamma^\vee}^{-1}$ sends the elements of the relative Picard
functor ${\rm Pic}^0(A/S)(T)$ to ${\rm Pic}^0(A'/S)(T)$ by doing
pull-back along $(\gamma^{-1}\times {\rm id}_T)$. Then by the
definition of $\lambda(\gamma^*L')$ we know that the composition
${\gamma^\vee}^{-1}\circ \lambda(\gamma^*L')$ corresponds to the
rigidified line bundle
\begin{align*}
&(\gamma^{-1}\times {\rm id}_A)^*(m_{A}^*\gamma^*L'\otimes
p_{1A}^*\gamma^*L'^{-1}\otimes p_{2A}^*\gamma^*L'^{-1})\\
=&(\gamma^{-1}\times {\rm id}_A)^*(\gamma\times
\gamma)^*(m_{A'}^*L'\otimes p_{1A'}^*L'^{-1}\otimes
p_{2A'}^*L'^{-1})\\
=&(\gamma\times \gamma\circ \gamma^{-1}\times {\rm
id}_A)^*(m_{A'}^*L'\otimes p_{1A'}^*L'^{-1}\otimes
p_{2A'}^*L'^{-1})\\
=&({\rm id}_{A'}\times \gamma)^*(m_{A'}^*L'\otimes
p_{1A'}^*L'^{-1}\otimes p_{2A'}^*L'^{-1})
\end{align*}
on $A'\times_S A$. So we are done.
\end{proof}

Let $\pi: A\to S$ be a projective abelian scheme, and let $L$ be a
symmetric, rigidified ample line bundle on $A$. Then $L$ induces a
polarization $\lambda(L): A\to A^\vee$ such that
$L^\Delta(\lambda(L))$ is canonically isomorphic to $L^2$ (cf.
Lemma~\ref{203}). Notice that the square of the rank of $\pi_*(L)$
is equal to the degree of the polarization $\lambda(L)$, then
according to Lemma~\ref{216} and Lemma~\ref{207}, we have a
well-defined natural transformation $\alpha$ from
$\widetilde{\mathcal{H}}_{g,d,n}$ to $\mathcal{H}_{g,d,n}$. The
following property of rigidified line bundle is very important for our later arguments.

\begin{rem}\label{217}
Let $L_1, L_2$ be two rigidified line bundles on an abelian
scheme $\pi: A\to S$ which are isomorphic to each other. Then
there is only one isomorphism between $L_1$ and $L_2$ which is compatible
with the rigidifications.
\end{rem}

\begin{rem}\label{218}
Let $L$ be a rigidified line bundle on $A$ and let $\rho_1,
\rho_2$ be two rigidifications $e^*L\to \mathcal{O}_S$. By
Remark~\ref{217}, there exists a unique automorphism $l$ of $L$
which is compatible with $\rho_1$ and $\rho_2$. If $L$ is moreover
relatively ample and is equipped with some linear rigidification
$\mathbb{P}(\pi_*(L^6))\cong \mathbb{P}_S^{6^g\cdot d-1}$, then
the automorphism $l$ respects the linear rigidification. This
result follows from the construction of the automorphism $l$, the
projection formula and the fact that $\PGL_N$ is isomorphic to
$\GL_{N+1}/\mathbf{G}_m$.
\end{rem}

By Theorem~\ref{207}, $\mathcal{H}_{g,d,n}$ is represented by a
quasi-projective scheme $H_{g,d,n}$ over $\Z$. Consider the
category $\mathcal{C}$ consisting of schemes over $H_{g,d,n}$,
endowed with the fppf topology. Then
$\widetilde{\mathcal{H}}_{g,d,n}$ induces a moduli functor
$\widetilde{\mathcal{H}}'_{g,d,n}$ over $\mathcal{C}$ which sends
an object $f: U\to H_{g,d,n}$ to the set of elements of
$\widetilde{\mathcal{H}}_{g,d,n}(U)$ which have the same image
under $\alpha$ in $\mathcal{H}_{g,d,n}(U)$ corresponding to $f$.

\begin{prop}\label{219}
The moduli functor $\widetilde{\mathcal{H}}'_{g,d,n}$ is a sheaf
over the site $\mathcal{C}$.
\end{prop}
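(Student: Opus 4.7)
The plan is to verify the two sheaf axioms (separation and gluing) for an arbitrary fppf covering $\{U_i\to U\}_{i\in I}$ of an object $f:U\to H_{g,d,n}$ of $\mathcal{C}$. Since the polarized abelian scheme $A:=Z_{g,d,n}\times_{H_{g,d,n}} U$, its level-$n$-structure, and the linear rigidification of $\pi_*(L^\Delta(\lambda)^3)$ are entirely determined by $f$, an element of $\widetilde{\mathcal{H}}'_{g,d,n}(U)$ amounts to an isomorphism class of symmetric, rigidified, relatively ample line bundle $L$ on $A$ with $\mathrm{rank}(\pi_*L)=d$ together with a rigidification $\rho:e^*L\cong \mathcal{O}_U$, subject to the constraint that $\lambda(L)$ coincides with the universal polarization and that the linear rigidification of $\pi_*(L^6)$ agrees, via the canonical identification $L^6\cong L^\Delta(\lambda(L))^3$ (Lemma~\ref{203}, using that $L$ is symmetric so that $[-1]^*L\cong L$), with the one descending from $f$.

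The crucial rigidity observation is Remark~\ref{217}: any isomorphism between two rigidified line bundles that is compatible with their rigidifications is unique. This is what turns the isomorphism-class-valued moduli problem into an honest sheaf rather than a stack, because it trivializes the cocycle conditions that would otherwise require choices.

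For separation, if two elements $(L,\rho)$ and $(L',\rho')$ become isomorphic on each $U_i$, then by Remark~\ref{217} there is a unique rigidification-compatible isomorphism $L|_{A\times_U U_i}\cong L'|_{A\times_U U_i}$, and by the same uniqueness these local isomorphisms automatically agree on overlaps; they therefore glue, via fppf descent for morphisms of quasi-coherent sheaves, to a global isomorphism over $A$. For gluing, given compatible isomorphism classes represented by $(L_i,\rho_i)$, the unique rigidification-compatible isomorphisms $\phi_{ij}:L_i|_{A\times_U U_{ij}}\cong L_j|_{A\times_U U_{ij}}$ automatically satisfy the cocycle condition on triple overlaps, so fppf descent for line bundles produces a line bundle $L$ on $A$ restricting to each $L_i$, and the $\rho_i$ glue to a rigidification $\rho:e^*L\cong\mathcal{O}_U$.

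The remaining conditions --- relative ampleness, symmetry, the rank equality $\mathrm{rank}(\pi_*L)=d$, the identification $\lambda(L)=\lambda_{\mathrm{univ}}$, and the matching of the linear rigidification of $\pi_*(L^6)$ with the one prescribed by $f$ --- are fppf-local on $U$ (for the last two, flat base change for $\pi_*$ applies since $L$ is relatively acyclic), so they descend automatically once they hold on the cover; Remark~\ref{218} ensures that the rigidifications on the $L_i^6$ are compatible with the prescribed linear rigidifications. No deep obstacle arises: the proof reduces to standard fppf descent for line bundles combined with the rigidity supplied by Remark~\ref{217}, and the only point requiring attention is the bookkeeping that matches the linear-rigidification data of $\widetilde{\mathcal{H}}_{g,d,n}$ with the one inherited from $\mathcal{H}_{g,d,n}$ under the canonical isomorphism $L^6\cong L^\Delta(\lambda(L))^3$.
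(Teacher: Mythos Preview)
Your argument is correct and rests on the same core idea as the paper's—both hinge on Remark~\ref{217} to turn isomorphism-class data into honest descent data—but there is one step you gloss over. Your opening claim that an element of $\widetilde{\mathcal{H}}'_{g,d,n}(U)$ ``amounts to'' a rigidified line bundle on the \emph{fixed} scheme $A=Z_{g,d,n}\times_{H_{g,d,n}}U$ presupposes that any isomorphism in $\widetilde{\mathcal{H}}_{g,d,n}$ between two objects with the same underlying $A$ involves only the identity automorphism of $A$. This requires Mumford's rigidity result that a linearly rigidified polarized abelian scheme has no nontrivial automorphisms (the argument preceding \cite[Prop.~7.5]{MFK}). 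The paper invokes this explicitly in its separation step, showing that the comparison automorphism $\eta_i$ of $A\times_U U_i$ must be the identity before it can conclude $L_1|_{U_i}\cong L_2|_{U_i}$; you should do likewise, since without it ``isomorphic on $U_i$'' in the moduli sense does not immediately yield an isomorphism of rigidified line bundles.

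For the gluing step your route is actually simpler than the paper's. The paper descends each pair $(A\times_U U_i,\,L_i)$ as a quasi-projective scheme equipped with an ample line bundle, obtaining an a priori new scheme $P$ over $U$ with a bundle $L_P$, and must then separately argue that the local identifications $\beta_i$ glue to a global isomorphism $\beta:P\to A$. You bypass this detour by observing that $A$ is already defined globally over $U$, so it suffices to descend the line bundles $L_i$ along the fppf cover $\{A\times_U U_i\to A\}$ directly; the cocycle condition for the unique rigidification-compatible isomorphisms $\phi_{ij}$ holds for free by Remark~\ref{217}, and fppf descent for quasi-coherent sheaves (\cite[Thm.~4.23]{Vi}) applies immediately. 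This is more economical and loses nothing.
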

\begin{proof}
Let $U\to H_{g,d,n}$ be an object in $\mathcal{C}$ and let
$\{U_i\to H_{g,d,n}\}_{i\in I}$ be an fppf covering of $U$. We
need to show that the following diagram
\begin{displaymath}
\xymatrix{ \widetilde{\mathcal{H}}'_{g,d,n}(U\to H_{g,d,n}) \ar[r]
& \prod_i \widetilde{\mathcal{H}}'_{g,d,n}(U_i\to H_{g,d,n})
\ar@<+.7ex>[r]^-{p_1^*} \ar@<-.7ex>[r]_-{p_2^*} & \prod_{i,j}
\widetilde{\mathcal{H}}'_{g,d,n}(U_i\times_U U_j\to H_{g,d,n})}
\end{displaymath}
is an equalizer.

Firstly, let $a,b$ be two elements in
$\widetilde{\mathcal{H}}'_{g,d,n}(U\to H_{g,d,n})$ such that
$a\mid_{U_i}=b\mid_{U_i}$ for any $i\in I$. Since
$\alpha(a)=\alpha(b)$, we may assume that $a,b$ are represented by
$(A/U,L_1)$ and $(A/U,L_2)$ with the same level-$n$-structure and
the same linear rigidification. Then $a\mid_{U_i}=b\mid_{U_i}$
means that there exists an $U$-automorphism $\eta_i$ of $A\times_U
U_i$ such that $L_1\mid_{U_i}\cong\eta_i^*L_2\mid_{U_i}$.
Moreover, this $U$-automorphism $\eta_i$ respects all of the
structures appearing in the definition of $\mathcal{H}_{g,d,n}$,
we again use Mumford's argument given before \cite[Prop. 7.5]{MFK}
to conclude that such $U$-automorphism is unique. Hence $\eta_i$
is the identity map so that $L_1\mid_{U_i}\cong L_2\mid_{U_i}$ for
every $i\in I$. We take into account the rigidifications of $L_1$
and $L_2$, by Remark~\ref{217}, the isomorphism $L_1\mid_{U_i}\cong
L_2\mid_{U_i}$ is unique because it is compatible with the
rigidifications. Therefore this family of isomorphisms are
compatible on $U_i\times_U U_j$ so that $L_1\cong L_2$ because the
fibred category of quasi-coherent sheaves over the category of
schemes is a stack with respect to the fppf topology (cf.
\cite[Thm. 4.23]{Vi}). The resulting isomorphism $L_1\cong L_2$ is
compatible with the rigidifications since its restriction to every
$U_i$ is so. We finally have $a=b$.

Secondly, let $\prod_ia_i$ be an element in $\prod_i
\widetilde{\mathcal{H}}'_{g,d,n}(U_i\to H_{g,d,n})$ with $a_i\in
\widetilde{\mathcal{H}}'_{g,d,n}(U_i\to H_{g,d,n})$ such that
$p_1^*(\prod_ia_i)=p_2^*(\prod_ia_i)$. So we have
$p_1^*(\prod_i\alpha(a_i))=p_2^*(\prod_i\alpha(a_i))$ in
$\prod_{i,j} \mathcal{H}_{g,d,n}(U_i\times_U U_j)$. Then by the
fact that every representable functor is a sheaf with respect to
the fppf topology, we have an element $t\in
\mathcal{H}_{g,d,n}(U)$ such that $t\mid_{U_i}=\alpha(a_i)$.
Therefore we may assume that there exists a linearly rigidified
polarized projective abelian scheme $(A/U,\lambda)$ with a
level-$n$-structure and every $a_i$ is equal to $(A\times_U
U_i/{U_i},L_i)$ with the induced level-$n$-structure such that
$\lambda(L_i)=\lambda\mid_{U_i}$. Now, we take the rigidification
of $L_i$ into account and use the descent theory for
quasi-projective morphisms via relatively ample line bundles. Precisely, notice that $p_1^*(\prod_ia_i)=p_2^*(\prod_ia_i)$
and the isomorphism between $L_j$ and $L_i$ on
$A\times_U U_i\times_U U_j$ is required to be compatible with
the rigidifications, we may glue all $(A\times_U
U_i/{U_i},L_i)$ to get a scheme $P$ which is quasi-projective over $U$
and a relatively ample line bundle $L_P$ on $P$ such that there
exists a family of $U$-isomorphisms $\beta_i:
(P,L_P)\mid_{U_i}\cong (A\times_U U_i/{U_i},L_i)$ satisfying
certain condition of compatibilities. The structure morphism
$P\to U$ is moreover flat and proper since it is so after
base-change along a faithfully flat morphism.
Next, we may glue all isomorphisms $\beta_i$ to get a global $U$-morphism $\beta$ from
$P$ to $A$, this follows from the fact that the fibred category
associated to a stable class of morphisms (e.g. flat morphisms)
over the category of schemes is a prestack with respect to the
fppf topology (cf. \cite[Prop. 4.31]{Vi}). The morphism $\beta$ is actually an
isomorphism because it becomes an isomorphism after base-change
along a faithfully flat morphism. We transfer $L_P$ via $\beta$ to
get a relatively ample line bundle $L$ on $A$, then $L\mid_{U_i}$
is isomorphic to $L_i$ for any $i\in I$. Finally, we still have to
show that $L$ is symmetric and rigidified. But, this can be easily
seen from Remark~\ref{217} and the fact that the fibred category of
quasi-coherent sheaves over the category of schemes is a stack
with respect to the fppf topology (cf. \cite[Thm. 4.23]{Vi}),
because every $L_i$ is symmetric and rigidified by definition. So
we are done.
\end{proof}

Now, let $\mathcal{G}$ be the group functor of $\mathcal{C}$ which
sends an object $U\to H_{g,d,n}$ to the group of $2$-torsion
points of the dual of $Z_{g,d,n}\times_{H_{g,d,n}} U$. Then
$\mathcal{G}$ is clearly represented by the subscheme of
$2$-torsion points of the dual of $Z_{g,d,n}$. We denote this
scheme by $G$, it is finite flat over $H_{g,d,n}$ and is \'{e}tale
over $H_{g,d,n}[1/2]$.\vv

To end this subsection, we mention that $G$ has a natural action
on $\widetilde{\mathcal{H}}'_{g,d,n}$. Let $A/S$ be a projective
abelian scheme and let $L$ be a symmetric, rigidified ample line
bundle on $A$. Then for any rigidified $2$-torsion line bundle $E$
(which is automatically symmetric), $E\otimes L$ and $L$ induce
the same polarization. In fact, $L^\Delta(\lambda(E\otimes
L))=L^\Delta(\lambda(L))=L^2$ and hence $2\lambda(E\otimes
L)=2\lambda(L)$ which implies that $\lambda(E\otimes
L)=\lambda(L)$. So we may define the action of $G$ on
$\widetilde{\mathcal{H}}'_{g,d,n}$ by twisting the relatively
ample line bundle by a rigidified $2$-torsion line bundle. By
Remark~\ref{218} this action is well-defined, namely it is
independent of the choice of the explicit rigidification of a
rigidified $2$-torsion line bundle. This $G$-action will play a
crucial role in the study of the representability of
$\widetilde{\mathcal{H}}_{g,d,n}$.

\subsection{Representability of $\widetilde{\mathcal{H}}_{g,d,n}$}
In this subsection, we shall investigate the representability of
the functor $\widetilde{\mathcal{H}}_{g,d,n}$. It is clear that
$\widetilde{\mathcal{H}}_{g,d,n}$ is representable if and only if
$\widetilde{\mathcal{H}}'_{g,d,n}$ is representable. So we may
concentrate on the representability of
$\widetilde{\mathcal{H}}'_{g,d,n}$. Our first result is the
following.

\begin{lem}\label{220}
Let $\lambda$ be the universal polarization of the universal
abelian scheme $Z_{g,d,n}$ over $H_{g,d,n}$. Then there exists an
fppf covering $\{U_i\to H_{g,d,n}\}_{i\in I}$ of $H_{g,d,n}$ such
that for every $i\in I$ there exists a symmetric, rigidified ample
line bundle $L_i$ on $Z_{g,d,n}\times_{H_{g,d,n}}U_i$ as a square
root of $L^\Delta(\lambda)_{U_i}$.
\end{lem}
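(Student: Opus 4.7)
The plan is to reduce the problem to finding, fppf-locally over $H_{g,d,n}$, a symmetric rigidified line bundle $L_i$ satisfying $\lambda(L_i)=\lambda_{U_i}$; once this is done, Lemma~\ref{203} gives
\[
L_i^{\otimes 2}\cong L_i\otimes [-1]^*L_i \cong L^\Delta(\lambda(L_i))\cong L^\Delta(\lambda)_{U_i}
\]
canonically, and relative ampleness of $L_i$ follows from that of $L^\Delta(\lambda)_{U_i}$ because, for a proper morphism, ampleness of a positive tensor power of a line bundle implies ampleness of the line bundle itself.

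The construction of $L_i$ would proceed in two fppf-local stages. First, I would lift $\lambda$ to a rigidified (not yet symmetric) line bundle: the relative Picard scheme $\Pic(Z_{g,d,n}/H_{g,d,n})$ is representable with identity component the dual abelian scheme $Z_{g,d,n}^\vee$, and each connected component is a $Z_{g,d,n}^\vee$-torsor indexed by a relative N\'{e}ron--Severi class. The polarization $\lambda$ singles out a particular component $T$, which is an fppf $Z_{g,d,n}^\vee$-torsor over $H_{g,d,n}$; trivializing this torsor over an fppf covering $\{U'_i\to H_{g,d,n}\}$ produces rigidified line bundles $M_i$ on $Z_{g,d,n}\times_{H_{g,d,n}}U'_i$ with $\lambda(M_i)=\lambda_{U'_i}$.

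Second, I would correct $M_i$ to be symmetric. A direct computation, using that $[-1]^*$ acts as inversion on $Z_{g,d,n}^\vee=\Pic^0$ and that $\lambda(M_i)$ is a group homomorphism, shows $\lambda([-1]^*M_i)=\lambda(M_i)$; hence $\zeta_i:=[-1]^*M_i\otimes M_i^{-1}$ lies in $Z_{g,d,n}^\vee(U'_i)$. Since $[2]\colon Z_{g,d,n}^\vee\to Z_{g,d,n}^\vee$ is a finite faithfully flat isogeny, it is fppf-surjective, so after a further fppf refinement $U_i\to U'_i$ we may write $\zeta_i\cong \alpha_i^{\otimes 2}$ for some $\alpha_i\in Z_{g,d,n}^\vee(U_i)$. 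Setting $L_i:=M_i\otimes \alpha_i$, one checks
\[
[-1]^*L_i=[-1]^*M_i\otimes \alpha_i^{-1}=\zeta_i\otimes M_i\otimes \alpha_i^{-1}=M_i\otimes \alpha_i=L_i,
\]
so $L_i$ is symmetric; as $\alpha_i\in \Pic^0$, we still have $\lambda(L_i)=\lambda_{U_i}$, completing the construction.

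The main obstacle is the first stage: establishing that the polarization $\lambda$ genuinely defines a section of the relative N\'{e}ron--Severi sheaf, so that it singles out a $Z_{g,d,n}^\vee$-torsor component of $\Pic(Z_{g,d,n}/H_{g,d,n})$ in a functorial way. This requires a careful use of the representability of the relative Picard functor for projective abelian schemes, combined with the fact that on each geometric fibre every symmetric homomorphism $A\to A^\vee$ is induced by a line bundle. Once this is granted, the rest of the argument is the descent/twisting just described.
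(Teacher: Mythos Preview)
Your proposal is correct and the symmetrization step (your second stage) is exactly what the paper does. The first stage, however, follows a genuinely different route. The paper does not lift $\lambda$ directly to a line bundle via the N\'eron--Severi torsor; instead it exploits the theta group. Since $\lambda(L^\Delta(\lambda))=2\lambda$, one has $\ker([2])\subset \ker(\lambda(L^\Delta(\lambda)))$, so there is a central extension
\[
0\to \mathbf{G}_m\to K(L^\Delta(\lambda))\to \ker([2])\to 0,
\]
and Raynaud's vanishing $\mathcal{E}xt^1_S(\ker([2]),\mathbf{G}_m)=0$ splits it fppf-locally. A splitting gives an equivariant structure, hence descent of $L^\Delta(\lambda)$ along $[2]$ to some rigidified $M$ with $[2]^*M\cong L^\Delta(\lambda)$. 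Then $L^\Delta(\lambda)\otimes M^{-4}\in A^\vee$, and an fppf square root of this class, tensored with $M^2$, yields the square root $L_W$ of $L^\Delta(\lambda)$.

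Your approach trades this theta-group argument for the structural fact that the fiber of $\lambda(\cdot)\colon \Pic(A/S)\to \mathrm{Hom}_S(A,A^\vee)$ over $\lambda$ is an $A^\vee$-torsor. This is cleaner conceptually, but the ``obstacle'' you flag is real: you must justify that this fiber is not merely a pseudo-torsor but is fppf-locally trivial. The standard way is to invoke that $\Pic(A/S)/\Pic^0(A/S)$ is represented by an \'etale group scheme over $S$ and that each component of $\Pic(A/S)$ is a genuine $A^\vee$-torsor (see e.g.\ Kleiman's exposition in \cite{FGA}); then the fiber over $\lambda$ is itself smooth and surjective over $S$, so provides its own fppf cover. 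Once you cite this, your argument goes through. The paper's route avoids this structural input at the cost of the explicit theta-group computation and the appeal to Raynaud.
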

\begin{proof}
The proof given here is due to Tong (private communication between
Tong and the author). In the following, we shall use the notation
$A/S$ instead of $Z_{g,d,n}/{H_{g,d,n}}$. We first prove that
there exists an fppf covering of $S$ such that locally on the fppf
topology, $L^\Delta(\lambda)$ is isomorphic to $[2]^*M$ for some
rigidified line bundle $M$. It is sufficient to show that locally
on fppf topology, $L^\Delta(\lambda)$ admits an action of ${\rm
ker}([2])$ which is compatible with the action of ${\rm ker}([2])$
on $A$ given by translations.

In fact, denote by $\lambda'$ the polarization defined by
$L^\Delta(\lambda)$, then $\lambda'=2\lambda$ so that we have the
inclusion ${\rm ker}([2])\subset {\rm ker}(\lambda')$. This
implies that for any point $a\in {\rm ker}([2])$,
$t_a^*L^\Delta(\lambda)\simeq L^\Delta(\lambda)$ where $t_a: A\to
A$ is the translation map with respect to $a$. Consider the sheaf
$K(L^\Delta(\lambda))=\{(a,\alpha)\mid a\in {\rm ker}([2]),
\alpha: t_a^*L^\Delta(\lambda)\simeq L^\Delta(\lambda)\}$, it fits
a short exact sequence
\begin{displaymath}
0\to \mathbf{G}_m\to K(L^\Delta(\lambda))\to {\rm ker}([2])\to 0.
\end{displaymath}
Since the fppf sheaf $\mathcal{E}xt_{S}^1({\rm ker}([2]),\mathbf{G}_m)$ is
trivial (cf. \cite[Lemme 6.2.2]{Ray}), we may replace $S$ by an
fppf localization and suppose that the exact sequence given above
is split. Hence there exists a section $\theta: {\rm ker}([2])\to
K(L^\Delta(\lambda))$ of group schemes over $S$. This section
gives an action of ${\rm ker}([2])$ on $L^\Delta(\lambda)$ which
is compatible with the action of ${\rm ker}([2])$ on $A$ given by
translations.

Now, for any $s\in S$, we may assume that there exists an fppf
neighborhood $V$ of $s$ such that over $A_V$, $[2]^*M_V\simeq
L^\Delta(\lambda)_{A_V}$ for some rigidified line bundle $M_V$.
Since $[2]^*M_V$ is algebraically equivalent to $M_V^{\otimes4}$,
we have $N:=L^\Delta(\lambda)_{A_V}\otimes M_V^{-4}\in A_V^\vee$.
That means $N$ induces a section $\eta: V\to {\rm Pic}^0(A_V/V)$.
Consider the following Cartesian diagram
\begin{displaymath}
\xymatrix{ W \ar[r] \ar[d] & {\rm Pic}^0(A_V/V) \ar[d]^{[2]}\\
V \ar[r]^-{\eta} & {\rm Pic}^0(A_V/V),}
\end{displaymath}
here $[2]$ is faithfully flat. Therefore over $A_W$, $N$ has a
rigidified square root and hence $L^\Delta({\lambda})$ has a
square root which is rigidified by construction. We denote this
square root by $L_W$.

Now let $Q$ be the rigidified line bundle $L_W\otimes
[-1]^*L_W^\vee$. This line bundle $Q$ is a $2$-torsion since $L_W^{\otimes
2}=L^\Delta({\lambda})_{A_W}$ which is symmetric. Therefore $Q$
induces a section $\beta: W\to {\rm Pic}^0(A_W/W)$. Consider the
following Cartesian diagram
\begin{displaymath}
\xymatrix{ P \ar[r] \ar[d] & {\rm Pic}^0(A_W/W) \ar[d]^{[2]}\\
W \ar[r]^-{\beta} & {\rm Pic}^0(A_W/W),}
\end{displaymath} here
$[2]$ is again faithfully flat. So over $A_P$, $Q$ has a
rigidified square root $E$. Denote by $L_P$ the tensor product
$L_W\otimes E$, then $L_P$ is rigidified, symmetric and $L_P$
induces the same polarization ${\lambda}_{A_P}$ as $L_W$ because
$E$ is a torsion bundle. This actually implies that $E$ is a
$2$-torsion since $L_P^{\otimes
2}=L^\Delta({\lambda})_{A_P}=L_W^{\otimes 2}$ over $A_P$. So over
$A_P$, $Q$ is the trivial bundle and $L_W$ is symmetric. Finally,
to get the fppf covering $\{U_i\to S\}_{i\in I}$ we just take such
fppf neighborhoods $P$ around all the points of $S$. Notice that
$I$ can be chosen to be of finite number, because $S$ is
quasi-compact and flat morphisms are open if they are of finite
type.
\end{proof}

\begin{cor}\label{221}
$\widetilde{\mathcal{H}}'_{g,d,n}$ is a $G$-torsor sheaf over the
site $\mathcal{C}$.
\end{cor}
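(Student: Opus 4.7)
The plan is to verify the two defining properties of a torsor sheaf on $\mathcal C$: fppf-local non-emptiness of $\widetilde{\mathcal H}'_{g,d,n}$, and simple transitivity of the $G$-action on fibers. Local non-emptiness is essentially the content of Lemma~\ref{220}: the fppf covering $\{U_i\to H_{g,d,n}\}_{i\in I}$ supplied there comes with a symmetric, rigidified, relatively ample line bundle $L_i$ on $Z_{g,d,n}\times_{H_{g,d,n}}U_i$ with $L_i^{\otimes 2}\cong L^\Delta(\lambda)_{U_i}$. Applying $\lambda(-)$ yields $2\lambda(L_i)=2\lambda_{U_i}$; since $\mathrm{Hom}_{U_i}(A_{U_i},A^\vee_{U_i})$ is torsion-free (a fiberwise consequence of the connectedness of the abelian scheme together with the finiteness of $\ker[2]_{A^\vee}$ and the rigidity lemma), this gives $\lambda(L_i)=\lambda_{U_i}$. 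The rigidification $e^*L_i\cong\mathcal O_{U_i}$ is part of $L_i$, and a linear rigidification of $\mathbb P(\pi_*L_i^{\otimes 6})$ is inherited from the fixed one on $\mathbb P(\pi_*L^\Delta(\lambda)^{\otimes 3})$ through the canonical isomorphism $L_i^{\otimes 6}\cong L^\Delta(\lambda)_{U_i}^{\otimes 3}$ (canonical by Remark~\ref{217}, since both sides are rigidified). So $L_i$ promotes to a section of $\widetilde{\mathcal H}'_{g,d,n}$ over $(U_i\to H_{g,d,n})$.

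For transitivity, I would fix $f:U\to H_{g,d,n}$ and two sections $[L_1],[L_2]\in\widetilde{\mathcal H}'_{g,d,n}(U\to H_{g,d,n})$, and set $E:=L_2\otimes L_1^{-1}$, with its canonical rigidification inherited from those of $L_1,L_2$ via Remark~\ref{217}. Because $\alpha([L_1])=\alpha([L_2])$ is the fixed polarization $\lambda$ determined by $f$, one has $\lambda(L_1)=\lambda(L_2)$, hence $\lambda(E)=0$ and $E\in\Pic^0(A/U)=A^\vee(U)$. The symmetry of $L_1$ and $L_2$ forces $[-1]^*E\cong E$, while any $E\in\Pic^0$ satisfies $[-1]^*E\cong E^{-1}$; combining, $E^{\otimes 2}\cong\mathcal O_A$, i.e.\ $E\in G(U)$. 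By construction $E\otimes L_1\cong L_2$ compatibly with the rigidifications, so $E\cdot[L_1]=[L_2]$.

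Freeness is immediate from Remark~\ref{217}: if $E\otimes L_1\cong L_1$ as rigidified bundles with $E\in G(U)$, the unique rigidification-compatible such isomorphism forces $E\cong\mathcal O_A$ as rigidified bundles, so $E$ is the identity in $G(U)$. The compatibility of the twist with the linear rigidifications is already built into the definition of the $G$-action given at the end of Section 2.3 (it uses $E^{\otimes 6}\cong\mathcal O_A$, which comes canonically from $E^{\otimes 2}\cong\mathcal O_A$, together with Remark~\ref{218}), so it need not be re-checked here. Combining fppf-local non-emptiness with simple transitivity, and invoking Proposition~\ref{219} (which guarantees that $\widetilde{\mathcal H}'_{g,d,n}$ is a sheaf on $\mathcal C$), one concludes that it is a $G$-torsor.

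The two nontrivial inputs are thus Lemma~\ref{220} itself and the identification $[-1]^*E\cong E^{-1}$ for $E\in\Pic^0$, a standard feature of the dual abelian scheme; the remaining content is formal bookkeeping with rigidifications, controlled throughout by Remarks~\ref{217} and~\ref{218}. The main obstacle, producing the square root of $L^\Delta(\lambda)$ fppf-locally, has already been handled in Lemma~\ref{220}, so once that is granted the present corollary follows without further essential difficulty.
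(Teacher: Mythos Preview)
Your proof is correct and follows the same approach as the paper's. The paper's argument simply asserts that freeness and transitivity of the $G$-action are ``easily seen'' and then invokes Lemma~\ref{220} for fppf-local non-emptiness; you have supplied precisely the details behind that assertion (the construction of $E=L_2\otimes L_1^{-1}$, the torsion-freeness of $\mathrm{Hom}(A,A^\vee)$, and the identity $[-1]^*E\cong E^{-1}$ on $\Pic^0$), together with the verification that the $L_i$ of Lemma~\ref{220} actually yields a section of $\widetilde{\mathcal H}'_{g,d,n}$.
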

\begin{proof}
It is easily seen that the action of $G$ on
$\widetilde{\mathcal{H}}'_{g,d,n}$ is free and transitive. Then
$\widetilde{\mathcal{H}}'_{g,d,n}$ is a $G$-torsor sheaf if and
only if there exists an fppf covering $\{U_i\to H_{g,d,n}\}_{i\in
I}$ of $H_{g,d,n}$ such that for every $i\in I$ the set
$\widetilde{\mathcal{H}}'_{g,d,n}(U_i\to H_{g,d,n})$ is non-empty,
because $G$ is an abelian group. This fact follows from
Lemma~\ref{220}, so we are done.
\end{proof}

\begin{thm}\label{222}
The moduli functor $\widetilde{\mathcal{H}}'_{g,d,n}$ is
representable.
\end{thm}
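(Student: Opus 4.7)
The plan is to combine the torsor structure established in Corollary~\ref{221} with fppf descent for affine morphisms. By Corollary~\ref{221}, the sheaf $\widetilde{\mathcal{H}}'_{g,d,n}$ is a $G$-torsor over the site $\mathcal{C}$, and by Lemma~\ref{220} there is an fppf covering $\{U_i\to H_{g,d,n}\}_{i\in I}$ on which $\widetilde{\mathcal{H}}'_{g,d,n}(U_i\to H_{g,d,n})$ is non-empty. A choice of element in each $\widetilde{\mathcal{H}}'_{g,d,n}(U_i\to H_{g,d,n})$ gives a trivialization
\begin{displaymath}
\widetilde{\mathcal{H}}'_{g,d,n}\times_{H_{g,d,n}}U_i\cong G\times_{H_{g,d,n}}U_i
\end{displaymath}
of the torsor over each $U_i$. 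Since $G$ is finite flat over $H_{g,d,n}$, the right-hand side is representable by a scheme which is finite flat (hence affine) over $U_i$.

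The next step is to descend. The trivializations differ on the double intersections $U_{ij}:=U_i\times_{H_{g,d,n}}U_j$ by translation by some section of $G$ over $U_{ij}$, producing canonical gluing isomorphisms that automatically satisfy the cocycle condition on triple intersections because $\widetilde{\mathcal{H}}'_{g,d,n}$ is itself a sheaf (Proposition~\ref{219}) and $G$ is a sheaf of abelian groups. This assembles into descent data for the family of affine $U_i$-schemes $G\times_{H_{g,d,n}}U_i$ with respect to the fppf covering $\{U_i\to H_{g,d,n}\}_{i\in I}$. Descent for affine morphisms along fppf coverings (SGA~1, VIII, or equivalently the fact that quasi-coherent algebras form a stack for the fppf topology, cf.~\cite[Thm. 4.23]{Vi}) is effective, so this descent datum yields a scheme $\widetilde{H}_{g,d,n}$ finite flat over $H_{g,d,n}$ together with an isomorphism to $G\times_{H_{g,d,n}}U_i$ after pulling back along each $U_i\to H_{g,d,n}$.

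Finally, I would verify that $\widetilde{H}_{g,d,n}$ represents $\widetilde{\mathcal{H}}'_{g,d,n}$: both functors are fppf sheaves on $\mathcal{C}$ (the representable one automatically, the other by Proposition~\ref{219}), and the construction provides an isomorphism between them after restriction to the covering $\{U_i\to H_{g,d,n}\}$ that is compatible with the two gluing data. By the sheaf property on $\mathcal{C}$, this local isomorphism descends to a global isomorphism $\widetilde{\mathcal{H}}'_{g,d,n}\cong \mathrm{Hom}_{H_{g,d,n}}(\cdot,\widetilde{H}_{g,d,n})$. The main obstacle, and the only real content beyond routine bookkeeping, is the triviality of the torsor on some fppf cover, which is exactly what Lemma~\ref{220} supplies; once that is granted, effectivity of descent for affine (in fact finite) morphisms closes the argument.
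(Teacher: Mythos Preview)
Your proof is correct and follows essentially the same approach as the paper: the paper simply observes that $G$ is finite (hence affine) over $H_{g,d,n}$ and then invokes \cite[Theorem~III.4.3~(a)]{Mil}, which is precisely the statement that an fppf torsor under an affine group scheme is representable. Your argument unpacks that black box by writing out the descent-for-affine-morphisms step explicitly, but the content is identical.
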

\begin{proof}
$G$ is finite and hence affine over $H_{g,d,n}$, so the
representability of $\widetilde{\mathcal{H}}'_{g,d,n}$ follows
from Corollary~\ref{221} and \cite[Theorem III.4.3 (a)]{Mil}.
\end{proof}

From Theorem~\ref{222} we know that the moduli functor
$\widetilde{\mathcal{H}}_{g,d,n}$ is represented by a scheme
$\widetilde{H}_{g,d,n}$ over $H_{g,d,n}$. The following
proposition summarizes some properties of $\widetilde{H}_{g,d,n}$.

\begin{prop}\label{223}
$\widetilde{H}_{g,d,n}$ is flat and quasi-projective over $\Z$,
and it is smooth over $\Q$.
\end{prop}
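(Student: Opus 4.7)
The plan is to exploit the torsor structure from Corollary~\ref{221} and Theorem~\ref{222} to transport the known properties of $H_{g,d,n}$ recorded in Theorem~\ref{214}(ii) up to $\widetilde{H}_{g,d,n}$. By Corollary~\ref{221}, $\widetilde{\mathcal{H}}'_{g,d,n}$ is a $G$-torsor over $H_{g,d,n}$ for the fppf topology; since $\mathcal{G}$ is represented by the closed subscheme $G$ of $2$-torsion points in the dual of $Z_{g,d,n}$, which is finite flat over $H_{g,d,n}$ (and \'{e}tale over $H_{g,d,n}[1/2]$), the structure morphism $p\colon \widetilde{H}_{g,d,n}\to H_{g,d,n}$ is itself finite and faithfully flat. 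This is the key input from which all three assertions will be derived.

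For quasi-projectivity, I would simply note that a finite morphism is projective, so $p$ is projective; combined with the quasi-projectivity of $H_{g,d,n}$ over $\Z$ from Theorem~\ref{208} (which is part of Mumford's result), the composition $\widetilde{H}_{g,d,n}\to \mathrm{Spec}\,\Z$ is quasi-projective. For flatness, since $p$ is flat and $H_{g,d,n}$ is faithfully flat over $\mathrm{Spec}\,\Z[1/n]$ by Theorem~\ref{214}(ii), the composition gives flatness of $\widetilde{H}_{g,d,n}$ over $\mathrm{Spec}\,\Z[1/n]$, which is what is meant by ``flat over $\Z$'' in the statement (consistent with Theorem~\ref{214}(ii)).

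For smoothness over $\Q$, the idea is that $2$ is invertible in $\Q$, so $G_\Q$ is \'{e}tale over $H_{g,d,n}\otimes\Q$, making $p_\Q\colon \widetilde{H}_{g,d,n}\otimes\Q\to H_{g,d,n}\otimes\Q$ an \'{e}tale morphism (\'{e}taleness of a torsor follows from \'{e}taleness of the structure group scheme). By Theorem~\ref{214}(ii), $H_{g,d,n}\otimes\Q$ is smooth over $\Q$, and smoothness is preserved under \'{e}tale base change; composing, $\widetilde{H}_{g,d,n}\otimes\Q$ is smooth over $\Q$.

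There is no real obstacle here: everything is a formal consequence of properties of finite flat torsors once we know the group scheme $G$. The only point that requires some care is checking that the torsor $\widetilde{H}_{g,d,n}\to H_{g,d,n}$ is representable by a finite $H_{g,d,n}$-scheme rather than merely an algebraic space (this is precisely Theorem~\ref{222}, which uses affineness of $G$) and that \'{e}taleness/flatness properties pass from $G$ to the total space of the torsor via the defining Cartesian square $\widetilde{H}_{g,d,n}\times_{H_{g,d,n}}\widetilde{H}_{g,d,n}\simeq G\times_{H_{g,d,n}}\widetilde{H}_{g,d,n}$ together with fppf descent.
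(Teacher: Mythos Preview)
Your proof is correct and follows essentially the same approach as the paper: the paper's proof also uses that $G$ is finite flat over $H_{g,d,n}$ and \'{e}tale over $H_{g,d,n}[1/2]$, invokes \cite[Prop.~III.4.2]{Mil} to transfer these properties to the torsor $\widetilde{H}_{g,d,n}\to H_{g,d,n}$, and then concludes via Theorem~\ref{214}. Your argument simply spells out in more detail what the paper compresses into two sentences.
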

\begin{proof}
$G$ is finite flat over $H_{g,d,n}$ and $G[1/2]$ is \'{e}tale over
$H_{g,d,n}[1/2]$, so $\widetilde{H}_{g,d,n}$ is finite flat over
$H_{g,d,n}$ and $\widetilde{H}_{g,d,n}[1/2]$ is \'{e}tale over
$H_{g,d,n}[1/2]$. This follows from \cite[Prop. III.4.2]{Mil}.
Then the statement can be deduced from Theorem~\ref{214}.
\end{proof}

To end this subsection, we mention that the scheme
$\widetilde{H}_{g,d,n}$ and the universal abelian scheme
$\widetilde{Z}_{g,d,n}$ over $\widetilde{H}_{g,d,n}$ admit natural
$\PGL_N$-actions such that the structure morphism $\pi:
\widetilde{Z}_{g,d,n}\to \widetilde{H}_{g,d,n}$ is
$\PGL_N$-equivariant. Moreover, the universal line bundle $L$ on
$\widetilde{Z}_{g,d,n}$ can be equipped with the canonical
$\PGL_N$-structure. Similarly, by changing the rigidification,
$\widetilde{H}_{g,d,n}$ and $\widetilde{Z}_{g,d,n}$ are
$\mathbf{G}_m$-equivariant schemes. But by Remark~\ref{218}, these
$\mathbf{G}_m$-actions are both trivial. The universal line bundle
$L$ on $\widetilde{Z}_{g,d,n}$ can also be equipped with the
canonical $\mathbf{G}_m$-structure which is not the trivial one.

\section{Construction of the canonical trivialization of $\Delta(L)^{\otimes 12}$}
Let $\pi: A\to S$ be a projective
abelian scheme of relative dimension $g$ with a symmetric,
rigidified ample line bundle $L$ such that the rank of $\pi_*L$ is
equal to $d$. Here $S$ is not necessarily quasi-projective over an
affine scheme. In this subsection, we shall construct an
isomorphism $\Delta(L)^{\otimes 12}\cong \mathcal{O}_S$ which is
canonical in the sense that it is compatible with arbitrary
base-change.\vv

Suppose that $\{U_i\}$ is an open covering of $S$ such that the
restriction of $A/S$ to every $U_i$ admits a linear
rigidification. It is clear that such open covering always exists
and moreover we may assume that all $U_i$ are affine. We choose a
linear rigidification for $A_{U_i}/{U_i}$, then there exists a
unique morphism $f: U_i\to \widetilde{H}_{g,d,1}$ such that $A_{U_i}/{U_i}$ is
isomorphic to $\widetilde{Z}_{g,d,1}\times_f U_i/{U_i}$ with the structure of
rigidified line bundle and the structure of linear rigidification. Write
$L_Z$ for the universal rigidified line bundle on $\widetilde{Z}_{g,d,1}$. By
Proposition~\ref{223} we know that $\widetilde{H}_{g,d,1}$ is quasi-projective
over $\Z$, then we may use the theorem of Maillot and R\"{o}ssler (cf. Theorem~\ref{102})
to conclude that the order of $\Delta(L_Z)$ in ${\rm Pic}(\widetilde{H}_{g,d,1})$ is a divisor of $12$.
We choose an arbitrary trivialization $\eta:
\Delta(L_Z)^{\otimes 12}\cong
\mathcal{O}_{\widetilde{H}_{g,d,1}}$, then it becomes universal. Pulling back $\eta$ to $U_i$ along $f$,
we get an isomorphism $\eta_i$ between
$\Delta(L_i)^{\otimes 12}$ and
$\mathcal{O}_{U_i}$. We want to show that $\eta_i$ is independent
of the choice of the linear rigidification for $A_{U_i}/{U_i}$.
Note that $A_{U_i}/{U_i}$ can be chosen as a representative to
define the canonical $\PGL_N$-structure on
$\Delta(L_Z)$, then the statement that $\eta_i$ is
independent of the choice of the linear rigidification is
equivalent to the statement that the isomorphism $\eta:
\Delta(L_Z)^{\otimes 12}\cong
\mathcal{O}_{\widetilde{H}_{g,d,1}}$ is $\PGL_N$-equivariant. But $\eta$ is
automatically $\PGL_N$-equivariant because of the following lemma.

\begin{lem}\label{301}
Every line bundle $L$ on $\widetilde{H}_{g,d,1}$ admits at most one
$\PGL_N$-structure.
\end{lem}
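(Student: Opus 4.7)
The plan is to reduce the uniqueness to the classical fact that the algebraic group $\PGL_N$ admits no non-trivial characters, equivalently, that its only global invertible regular functions (over any field) are the constants. Given two $\PGL_N$-structures $\phi_1,\phi_2$ on $L$, I would consider the ratio $u:=\phi_2\circ\phi_1^{-1}$, which is an automorphism of the line bundle $p_2^{*}L$ on $\PGL_N\times\widetilde H_{g,d,1}$, hence a global unit $u\in\Gamma(\PGL_N\times\widetilde H_{g,d,1},\mathcal{O}^{*})$. Specializing the cocycle condition for each $\phi_i$ at $(e,e,x)$ forces $\phi_i|_{\{e\}\times\widetilde H_{g,d,1}}=\mathrm{id}$, so $u|_{\{e\}\times\widetilde H_{g,d,1}}=1$. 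The remaining task is to propagate this identity to deduce $u\equiv 1$, which immediately gives $\phi_1=\phi_2$.

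I would establish $u\equiv 1$ fibrewise over the geometric points of $\widetilde H_{g,d,1}$. First note that $\widetilde H_{g,d,1}$ is reduced: by Proposition~\ref{223} it is flat over $\Z$ with smooth (hence reduced) generic fibre over $\Q$, and any $\Z$-flat scheme whose generic fibre is reduced is itself reduced, since its local rings inject into their rationalizations. Since $\PGL_N$ is smooth over $\Z$, the product $\PGL_N\times\widetilde H_{g,d,1}$ is again reduced, so a regular function is zero iff it vanishes at every geometric point. For a geometric point $\bar x$ of $\widetilde H_{g,d,1}$, the restriction of $u$ lies in $\Gamma(\PGL_{N,\kappa(\bar x)},\mathcal{O}^{*})$, and the key input is that this group equals $\kappa(\bar x)^{*}$: writing $\PGL_N=\GL_N/\mathbf{G}_m$, invertible regular functions on $\GL_N$ over a field are scalar multiples of powers of $\det$, and $\det^{n}$ is $\mathbf{G}_m$-invariant under scaling only when $n=0$. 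The resulting constant equals $u|_{(e,\bar x)}=1$, so $u$ is identically $1$ on every geometric fibre, and hence globally by reducedness.

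The main obstacle is the reducedness verification for $\PGL_N\times\widetilde H_{g,d,1}$: it requires combining the flatness of $\widetilde H_{g,d,1}$ over $\Z$ and the smoothness of its generic fibre (both from Proposition~\ref{223}) with the smoothness of $\PGL_N$ over $\Z$, so that the tensor product of a geometrically reduced $\Z$-algebra with a reduced one remains reduced. Once this is secured, no further manipulation of the cocycle condition is needed: the character-free nature of $\PGL_N$, encoded geometrically as $\Gamma(\PGL_{N,k},\mathcal{O}^{*})=k^{*}$, collapses $u$ to the constant $1$ on every fibre automatically.
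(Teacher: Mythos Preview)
Your argument is correct. Both your proof and the paper's hinge on the same two inputs---flatness of $\widetilde H_{g,d,1}$ over $\Z$ with smooth generic fibre (Proposition~\ref{223}), and the absence of non-trivial characters on $\PGL_N$---but they deploy them differently. The paper uses flatness to inject $p_2^*L$ into $p_2^*L_\Q$, reducing the question to the generic fibre, and then invokes \cite[Prop.~1.4]{MFK} as a black box over the geometrically reduced $\Q$-scheme $(\widetilde H_{g,d,1})_\Q$. You instead deduce from the same flatness that $\widetilde H_{g,d,1}$ (and hence $\PGL_N\times\widetilde H_{g,d,1}$, being smooth over it) is reduced, and then argue pointwise over \emph{all} geometric fibres, unpacking the character computation directly via the UFD structure of the coordinate ring of $\GL_N$. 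Your route is more self-contained and avoids the external citation; the paper's route is shorter by outsourcing the fibrewise step. One small sharpening: your justification that the product is reduced is cleanest phrased as ``$\PGL_N\times\widetilde H_{g,d,1}\to\widetilde H_{g,d,1}$ is smooth, and smooth over reduced is reduced,'' rather than the tensor-product formulation you give.
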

\begin{proof}
Let $m: \PGL_N\times \widetilde{H}_{g,d,1}\to \widetilde{H}_{g,d,1}$ be the
$\PGL_N$-action on $\widetilde{H}_{g,d,1}$. A $\PGL_N$-structure on $L$ is an
isomorphism $\gamma: m^*L\cong p_2^*L$ which satisfies certain
property of associativity. We first prove that two
$\PGL_N$-structures $\gamma_1$ and $\gamma_2$ of $L$ are equal if
they are equal on the generic fibre. Actually, by
Proposition~\ref{223} we know that $\widetilde{H}_{g,d,1}$ is flat over $\Z$
hence $\mathcal{O}_{H_{g,d,1}}$ is a flat $\Z$-module. Therefore
the restriction map from $\mathcal{O}_{\PGL_N\times \widetilde{H}_{g,d,1}}$ to
$\mathcal{O}_{\PGL_N\times \widetilde{H}_{g,d,1}}\otimes_\Z \Q$ is injective.
But $p_2^*L$ is a flat $\mathcal{O}_{\PGL_N\times
\widetilde{H}_{g,d,1}}$-module, so the restriction map $p_2^*L\to p_2^*L_\Q$
is injective. Hence if $\gamma_1$ and $\gamma_2$ are equal over
generic fibre, then they must be equal globally. By
Proposition~\ref{223}, the generic fibre of $\widetilde{H}_{g,d,1}$ is smooth
which implies that $({\widetilde{H}_{g,d,1}})_\Q$ is geometrically reduced.
Moreover, $({\PGL_N})_\Q$ is connected and there are no
non-trivial characters $\PGL_N\to \mathbf{G}_m$. Thus we can use
\cite[Prop. 1.4]{MFK} to conclude that $L_\Q$ admits only one
$\PGL_N$-structure. So we are done.
\end{proof}

Now we have known that the isomorphism $\eta_i$ is really
independent of the choice of the linear rigidification, hence
$\eta_i$ and $\eta_j$ are equal on $U_i\times_S U_j$ so that we
may glue all $\{\eta_i\}$ to get a global isomorphism $\alpha:
\Delta(L)^{\otimes 12}\cong \mathcal{O}_S$.
Clearly, the fact that $\eta_i$ is independent of the choice of
the linear rigidification also shows that the global isomorphism
$\alpha$ is independent of the choice of the open affine covering.
Therefore, this isomorphism $\alpha:
\Delta(L)^{\otimes 12}\cong \mathcal{O}_S$ is the
desired canonical trivialization of
$\Delta(L)^{\otimes 12}$.

\section{The class of $\Delta(\overline{L})$ in the arithmetic Picard group $\widehat{{\rm Pic}}(S)$}

\subsection{Arithmetic Adams-Riemann-Roch theorem}
In this subsection, we describe the arithmetic
Adams-Riemann-Roch theorem that we will use to investigate the
class of $\Delta(\overline{L})$ in the arithmetic Picard group $\widehat{{\rm Pic}}(S)$.
One can see \cite{Roe} for more details.

Let $X$ be a quasi-projective scheme over $\Z$ with smooth generic
fibre. Then $X(\C)$, the set of complex points of the variety
$X\times_\Z\C$ admits a structure of complex manifold. Arakelov
theory provides a powerful tool in the study of Diophantine
geometry by doing algebraic geometry of $X$ over $\Z$ and
hermitian complex geometry of $X(\C)$ simultaneously. For
instance, in the setting of Arakelov geometry, we have hermitian
vector bundles on $X$ and the arithmetic Grothendieck group
$\widehat{K_0}(X)$ associated to $X$, which are the main objects
in the expression of the arithmetic Adams-Riemann-Roch theorem.

\begin{defn}\label{401}
Denote by $F_\infty$ the antiholomorphic involution of $X(\C)$
induced by the complex conjugation. A hermitian vector bundle
$\overline{E}$ on $X$ is an algebraic vector bundle $E$ on $X$,
endowed with a hermitian metric on the associated holomorphic
vector bundle $E_\C$ on $X(\C)$ which is invariant under
$F_\infty$.
\end{defn}

Denote by $A^{p,p}(X)$ the set of real smooth forms $\omega$ of
type $(p,p)$ on $X(\C)$ which satisfy
$F_\infty^*\omega=(-1)^p\omega$, and by $Z^{p,p}(X)\subseteq
A^{p,p}(X)$ the kernel of the differential operator
$d=\partial+\overline{\partial}$. We shall write
$\widetilde{A}(X)$ for the set of form classes
\begin{displaymath}
\widetilde{A}(X):=\bigoplus_{p\geq0}\big(A^{p,p}(X)/({\rm
Im}\partial+{\rm Im}\overline{\partial})\big)
\end{displaymath} and
\begin{displaymath}
Z(X):=\bigoplus_{p\geq0}Z^{p,p}(X).
\end{displaymath}
To every hermitian vector bundle $\overline{E}$ on $X$, we may
associate a Chern character form ${\rm ch}(\overline{E}):={\rm
ch}(E_\C,h)$ which is defined by the Chern-Weil theory on
hermitian holomorphic vector bundles on complex manifolds.
Similarly, we have Todd form ${\rm Td}(\overline{E})$. Notice that
the Chern-Weil theory is not additive for short exact sequence of
hermitian vector bundles. Let $\overline{\varepsilon}: 0\to
\overline{E}'\to \overline{E}\to \overline{E}''\to 0$ be an exact
sequence of hermitian vector bundles on $X$, we can associate to
it a Bott-Chern secondary characteristic class $\widetilde{{\rm
ch}}(\overline{\varepsilon})\in \widetilde{A}(X)$ which satisfies
the differential equation
\begin{displaymath}
{\rm dd}^c\widetilde{{\rm ch}}(\overline{\varepsilon})={\rm
ch}(\overline{E}')-{\rm ch}(\overline{E})+{\rm
ch}(\overline{E}'')
\end{displaymath}
where ${\rm dd}^c$ is the differential
operator $\frac{\overline{\partial}\partial}{2\pi i}$.

\begin{defn}\label{402}
The arithmetic Grothendieck group $\widehat{K_0}(X)$ with respect
to $X$ is the abelian group generated by the elements of
$\widetilde{A}(X)$ and by the isometry classes of hermitian vector
bundles on $X$, modulo the following relations:

(i). for every exact sequence $\overline{\varepsilon}$ as above,
$\widetilde{{\rm
ch}}(\overline{\varepsilon})=\overline{E}'-\overline{E}+\overline{E}''$;

(ii). if $\alpha\in \widetilde{A}(X)$ is the sum of two elements
$\alpha'$ and $\alpha''$ in $\widetilde{A}(X)$, then the equality
$\alpha=\alpha'+\alpha''$ still holds in $\widehat{K_0}(X)$.
\end{defn}

We now recall the definitions of $\lambda$-ring and associated
Adams operations.

\begin{defn}\label{403}
A $\lambda$-ring is a unitary ring $R$ with operations $\lambda^k,
k\in \mathbb{N}$ satisfying the following axioms.

(i). $\lambda^0=1$, $\lambda^1(x)=x\quad \forall x\in R$,
$\lambda^k(1)=0\quad \forall k>1$.

(ii).
$\lambda^k(x+y)=\sum_{i=0}^k\lambda^i(x)\cdot\lambda^{k-i}(y)$.

(iii).
$\lambda^k(xy)=P_k(\lambda^1(x),\ldots,\lambda^k(x);\lambda^1(y),\ldots,\lambda^k(y))$
for some universal polynomial $P_k$ with integral coefficients.

(iv).
$\lambda^k(\lambda^l(x))=P_{k,l}(\lambda^1(x),\ldots,\lambda^{kl}(x))$
for some universal polynomial $P_{k,l}$ with integral
coefficients.

Putting $\lambda_t(x):=\sum_k\lambda^k(x)t^k$, we have
$\lambda_t(x+y)=\lambda_t(x)\cdot\lambda_t(y)$ by (ii). For the
definitions of $P_k$ and $P_{k,l}$, we refer to \cite[I. 4.2,
4.3]{SABK}
\end{defn}

Given a $\lambda$-ring $R$, the relationship between the Adams
operations $\psi^k$ and the $\lambda$-operations is the following.
Define a formal power series $\psi_t$ by the formula
\begin{displaymath}
\psi_t(x):=\frac{-t\cdot{\rm d}{\lambda_{-t}}(x)/{{\rm
d}t}}{\lambda_{-t}(x)}.
\end{displaymath}
The Adams operations are then given by
the identity (cf. \cite[V, Appendice]{GBI})
\begin{displaymath}
\psi_t(x)=\sum_{k\geq1}\psi^k(x)t^k.
\end{displaymath}

Now consider the group $\Gamma(X):=Z(X)\oplus \widetilde{A}(X)$,
we equip it with a grading $\Gamma(X)=\oplus_{p\geq0}\Gamma_p(X)$
where
\begin{displaymath}
\Gamma_p(X):=\left\{%
\begin{array}{ll}
    Z^{p,p}(X)\oplus\widetilde{A}^{p-1,p-1}(X), & \hbox{if $p\geq1$;} \\
    Z^{0,0}(X), & \hbox{if $p=0$.} \\
\end{array}%
\right.
\end{displaymath}
We define a bilinear map $*$ from $\Gamma(X)\times\Gamma(X)$ to
$\Gamma(X)$ by the formula
\begin{displaymath}
(\omega,\eta)*(\omega',\eta')=(\omega\wedge\omega',\omega\wedge\eta'+\eta\wedge\omega'+({\rm
dd}^c\eta)\wedge\eta').
\end{displaymath}
This map endows $\Gamma(X)$ with the
structure of a commutative graded $\R$-algebra (cf. \cite[Lemma
7.3.1]{GS2}). Hence there is a unique $\lambda$-ring structure on
$\Gamma(X)$ such that the $k$-th associated Adams operation is
given by the formula $\psi^k(x)=\sum_{i\geq0}k^ix_i$, where $x_i$
stands for the component of degree $i$ of the element $x\in
\Gamma(X)$ (cf. \cite[7.2, p. 361]{GBI}).

\begin{defn}\label{404}
If $\overline{E}+\eta$ and $\overline{E}'+\eta'$ are two
generators of $\widehat{K_0}(X)$, then we may define a product
$\otimes$ by the formula
\begin{displaymath}
(\overline{E}+\eta)\otimes(\overline{E}'+\eta')=\overline{E}\otimes\overline{E}'+[({\rm
ch}(\overline{E}),\eta)*({\rm ch}(\overline{E}'),\eta')]
\end{displaymath} where
$[\cdot]$ refers to the projection on the second component of
$\Gamma(X)$. If $k\geq0$, we set
\begin{displaymath}
\lambda^k(\overline{E}+\eta)=\wedge^k(\overline{E})+[\lambda^k({\rm
ch}(\overline{E}),\eta)]
\end{displaymath}
where $\lambda^k({\rm
ch}(\overline{E}),\eta)$ stands for the image of $({\rm
ch}(\overline{E}),\eta)$ under the $k$-th $\lambda$-operation of
$\Gamma(X)$.
\end{defn}

It was shown by Roessler in \cite{Roe1} that $\widehat{K_0}(X)$
with the product $\otimes$ and the operations $\lambda^k$ in
Definition~\ref{404} is actually a $\lambda$-ring.

Let $Y$ be another quasi-projective scheme over $\Z$ with smooth
generic fibre and suppose that $f: X\to Y$ is a flat projective
morphism which is smooth over $\Q$. In this situation, $f_\C: X(\C)\to Y(\C)$ is
a holomorphic proper submersion between complex manifolds. A K\"{a}hler fibration
structure on $f_\C$ is a real closed $(1,1)$-form $\omega$ on $X(\C)$ which induces
K\"{a}hler metrics on the fibres (cf. \cite[Def. 1.1, Thm. 1.2]{BK}).
If $f_\C$ is endowed with a K\"{a}hler fibration structure, we
may define a reasonable push-forward morphism $f_*: \widehat{K_0}(X)\to \widehat{K_0}(Y)$.
For instance, we can fix a conjugation invariant K\"{a}hler metric on
$X(\C)$ and choose corresponding K\"{a}hler form $\omega$ as the K\"{a}hler fibration structure.

Let $(E,h^E)$ be a hermitian vector bundle on $X$ such that
$E$ is $f$-acyclic i.e. the higher direct image $R^qf_*E$ vanishes
for $q>0$. By semi-continuity theorem (cf. \cite[Theorem III.12.8,
Cor. III.12.9]{Ha}) the sheaf of module $f_*E:=R^0f_*E$ is locally
free and the natural map
\begin{displaymath}
(R^0f_*E)_y\rightarrow H^0(X_y,E\mid_{X_y})
\end{displaymath} is an isomorphism
for every point $y\in Y$. In particular, we have natural isomorphism
\begin{displaymath}
(R^0f_*E_\C)_y\rightarrow H^0(X(\C)_y,E_\C\mid_{X(\C)_y})
\end{displaymath} for every point $y\in Y(\C)$.
On the other hand, we may endow
$H^0(X(\C)_y,E_\C\mid_{X(\C)_y})$ with a $L^2-$hermitian product given by the
formula
\begin{displaymath}
<s,t>_{L^2}:=\frac{1}{(2\pi)^{d_y}}\int_{X(\C)_y}h^E(s,t)\frac{\omega^{d_y}}{d_y!}
\end{displaymath}
where $d_y$ is the complex dimension of the fibre $X(\C)_y$. It can be
shown that these hermitian products depend on $y$ in a $C^\infty$
manner (cf. \cite[p.278]{BGV}) and hence define a hermitian metric
on $(f_*E)_\C$. This metric is called the $L^2$-metric. Let $(Tf_\C,h_f)$
be the relative holomorphic tangent bundle with the metric induced
by $\omega$. In \cite[Theorem 3.9]{BK}, Bismut and K\"{o}hler
constructed a smooth form $T(\omega,h^E)\in
A(Y)=\oplus_{p\geq0}A^{p,p}(Y)$ satisfying the differential
equation
\begin{displaymath}
{\rm dd}^cT(\omega,h^E)={\rm
ch}(f_*{E_\C},h^{L^2})-\int_{X(\C)/Y(\C)}{\rm ch}(E_\C,h^E){\rm Td}(Tf_\C,h_f).
\end{displaymath}
This smooth form is called the higher analytic torsion form associated
to $(E,h^E)$, $f_\C$ and $\omega$. Its definition is too long and technic,
we can not repeat it here. We just would like to mention that the
$0$-degree part i.e. the function part of $T(\omega,h^E)$ is the
famous Ray-Singer analytic torsion of $\overline{E_\C}$ on every
fibre $X(\C)_y$. The Quillen metric $\parallel\cdot\parallel_Q$ on
${\rm det}(f_*E)$ is defined by
\begin{displaymath}
\parallel\cdot\parallel_Q^2=e^{T_0(\omega,h^E)}\cdot\parallel\cdot\parallel_{L^2}^2.
\end{displaymath}

\begin{rem}\label{fibration}
It was shown in \cite[Corollary 8.10]{BFL} that the analytic torsion form $T(\omega,h^E)$ is compatible (up to exact $\partial$- and $\overline{\partial}$-forms) with any base-change of K\"{a}hler fibration. Therefore, the Quillen metric $\parallel\cdot\parallel_Q$ on ${\rm det}(f_*E)$ and the hermitian line bundle $\Delta(\overline{L})$ are compatible with arbitrary base-change.
\end{rem}

\begin{defn}\label{405}
The push-forward morphism $f_*: \widehat{K_0}(X)\to
\widehat{K_0}(Y)$ is defined as follows.

(i). for hermitian holomorphic vector bundle $(E,h)$ on $X$ such
that $E$ is $f$-acyclic,
$f_*(E,h):=(f_*E,h^{L^2})-T(\omega,h^E)$;

(ii). for $\eta\in \widetilde{A}(X)$, $f_*\eta:=\int_{X(\C)/Y(\C)}{\rm
Td}(Tf_\C,h_f)\eta$.
\end{defn}

\begin{rem}\label{406}
$f_*$ is a well-defined group homomorphism and it satisfies the
projection formula.
\end{rem}

The next important object appearing in the expression of the
Adams-Riemann-Roch theorem is the following $R$-genus.

\begin{defn}\label{407}
The $R$-genus is the unique additive characteristic class defined
for a line bundle $L$ by the formula
\begin{displaymath}
R(L)=\sum_{m \text{ }{\rm odd},
\geq1}\big(2\zeta'(-m)+\zeta(-m)(1+\frac{1}{2}+\cdots+\frac{1}{m})\big)\frac{{\rm
c}_1(L)^m}{m!}
\end{displaymath} where $\zeta(s)$ is the Riemann zeta-function.
\end{defn}

To state the arithmetic Adams-Riemann-Roch theorem, we still need
the Bott's cannibalistic classes. For any $\lambda$-ring $R$,
denote by $R_{\rm fin}$ its subset of elements of finite
$\lambda$-dimension. For each $k\geq1$, the Bott's cannibalistic
class $\theta^k$ is uniquely determined by the following
properties

(i). $\theta^k$ maps $R_{\rm fin}$ into $R_{\rm fin}$ and the
equation $\theta^k(a+b)=\theta^k(a)\theta^k(b)$ holds for all
$a,b\in R_{\rm fin}$;

(ii). $\theta^k$ is functorial with respect to $\lambda$-ring
morphisms;

(iii). if $e$ is a line element (its $\lambda$-dimension is $1$),
then $\theta^k(e)=\sum_{i=0}^{k-1}e^i$.

Now, consider the graded commutative group
$\widetilde{A}(X)=\oplus_{p\geq0}\widetilde{A}^{p,p}(X)$, giving
degree $p$ to differential forms of type $(p,p)$. We define
$\phi^k(\omega)=\sum_{i=0}^\infty k^i\omega_i$ where $\omega_i$ is
the component of degree $i$ of $\omega\in \widetilde{A}(X)$. Then
one can compute that $\psi^k(\omega)=k\cdot\phi^k(\omega)$ where
on the left hand side $\omega$ is regarded as an element of the
$\lambda$-ring $\Gamma(X)$.

Let $\overline{E}$ be a hermitian vector bundle on $X$, then the form $k^{-{\rm rk}(E)}{\rm Td}^{-1}(\overline{E})\phi^k({\rm Td}(\overline{E}))$ is by construction a universal polynomial in the Chern forms $c_i(\overline{E})$. The associated symmetric polynomial in $r={\rm rk}(E)$ variables is denoted by $CT^k$, and one can compute that
\begin{displaymath}
CT^k=k^r\prod_{i=1}^r\frac{e^{T_i}-1}{T_ie^{T_i}}\cdot\frac{kT_ie^{kT_i}}{e^{kT_i}-1}
\end{displaymath}
where $T_1,\ldots,T_r$ are the variables. For an exact sequence of hermitian holomorphic vector bundles $\overline{\varepsilon}: 0\to \overline{E'}\to \overline{E}\to \overline{E''}\to 0$ on a complex manifold, the Bott-Chern secondary characteristic class associated to $\overline{\varepsilon}$ and to $CT^k$ will be denoted by $\widetilde{\theta}^k(\overline{\varepsilon})$.

We now turn back to the flat projective morphism $f: X\to Y$, which is smooth over $\Q$. Suppose that $f$ is a local complete intersection morphism. Let $i: X\to P$ be a regular immersion and $p: P\to Y$ be a smooth morphism, such that $f=p\circ i$. Endow $P$ with a K\"{a}hler metric and the normal bundle $N_{P/X}$ with some hermitian metric. Denote by $\overline{\mathcal{N}}$ be the exact sequence $0\to \overline{Tf_\C}\to \overline{TP_\C}\to \overline{N}_{P(\C)/X(\C)}\to 0$.

\begin{defn}\label{bott}
The arithmetic Bott class $\theta^k(\overline{Tf}^\vee)^{-1}$ of $f$ is the element $\theta^k(\overline{N}_{P/X}^\vee)\widetilde{\theta}^k(\overline{\mathcal{N}})+\theta^k(\overline{N}_{P/X}^\vee)\theta^k(i^*\overline{TP}^\vee)^{-1}$
in $\widehat{K_0}(X)[1/k]$.
\end{defn}

\begin{rem}\label{bott-class}
The arithmetic Bott class of $f$ depends neither on $i$ nor on the metrics on $P$ and on $N_{P/X}$ (cf. \cite[Lemma 3.5]{Roe}).
\end{rem}

\begin{thm}\label{408}(arithmetic Adams-Riemann-Roch)
Let $f: X\to Y$ be as above. For each $k\geq1$, let
$\theta_A^k(\overline{Tf}^\vee)^{-1}=\theta^k(\overline{Tf}^\vee)^{-1}\cdot(1+R(Tf_\C)-k\cdot\phi^k(R(Tf_\C)))$.
Then for the map $f_*: \widehat{K_0}(X)[1/k]\to
\widehat{K_0}(Y)[1/k]$, the equality
\begin{displaymath}
\psi^k(f_*(x))=f_*(\theta_A^k(\overline{Tf}^\vee)^{-1}\cdot\psi^k(x))
\end{displaymath}
holds in $\widehat{K_0}(Y)[1/k]$ for all $k\geq1$ and $x\in
\widehat{K_0}(X)[1/k]$.
\end{thm}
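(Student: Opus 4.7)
The plan is to follow the classical two-step template for Adams--Riemann--Roch (regular immersion plus smooth projection), transported into the arithmetic/hermitian setting, while carefully tracking the Bott--Chern secondary classes and the higher analytic torsion forms that arise along the way. Since $f$ is a projective local complete intersection morphism that is smooth over $\Q$, I would first choose a factorization $f = p \circ i$ with $i\colon X \to P = \mathbb{P}^N_Y$ a regular closed immersion and $p\colon P \to Y$ the canonical smooth projection, and equip $P$ with a conjugation-invariant Kähler metric. By Remark~\ref{bott-class}, the arithmetic Bott class $\theta^k(\overline{Tf}^\vee)^{-1}$ of Definition~\ref{bott} is independent of this factorization, and the characteristic class $1+R(Tf_\C)-k\cdot\phi^k(R(Tf_\C))$ is defined fiberwise; hence it is enough to establish the formula separately for $i_*$ and for $p_*$, and then check compatibility under composition via the projection formula of Remark~\ref{406}.

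For the smooth projection $p$, I would apply the splitting principle and the projection formula to reduce to the universal case of relative projective bundles $\mathbb{P}(E)\to Y$ with $E$ replaced by line bundle coefficients $\mathcal{O}(m)$. Here the higher analytic torsion form of Bismut--Köhler is explicitly computable (following Bismut--Vasserot, Gillet--Soulé), and one verifies the formula by matching it against the arithmetic Riemann--Roch theorem after twisting by $\psi^k$. In this comparison the correction factor $1+R-k\phi^k(R)$ is forced by the identity $\psi^k\circ\mathrm{Td} \ne \mathrm{Td}\circ\psi^k$ in arithmetic Riemann--Roch, i.e.\ it measures precisely the defect between the classical and arithmetic Todd classes under the $k$-th Adams operator.

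For the regular closed immersion $i$, the essential analytic input is the Bismut--Lebeau immersion theorem, which relates the Quillen metric on the determinant of the direct image of a locally free resolution of $i_*E$ on $P$ to that of $E$ on $X$, modulo a Bott--Chern secondary class involving the $R$-genus of the normal bundle $N_{P/X}$. Applying $\psi^k$ to this relation and matching term by term against $\theta^k(\overline{N}_{P/X}^\vee)\cdot\widetilde{\theta}^k(\overline{\mathcal N})$ and the smooth part $\theta^k(i^*\overline{TP}^\vee)^{-1}$ of Definition~\ref{bott} yields the immersion case of the theorem. The composition of the two cases then assembles, using the cocycle/associativity of the secondary classes, into the full statement in $\widehat{K_0}(Y)[1/k]$.

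The main obstacle will be the analytic bookkeeping: one must control how the higher analytic torsion and the associated Bott--Chern classes transform under the multiplicative characteristic class $CT^k$, and verify that the secondary contributions produced by $i_*$ and by $p_*$ glue coherently under the factorization $f=p\circ i$. This is exactly the step where the $R$-genus and the $\phi^k$-twist are indispensable — they absorb precisely the non-multiplicative defect of the Ray--Singer torsion under the $k$-th Adams operation — and a careful execution, as carried out in \cite{Roe}, is required.
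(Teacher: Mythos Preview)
The paper does not prove this theorem at all: its entire proof is the single sentence ``This is \cite[Theorem 3.6]{Roe}.'' What you have written is not a comparison target but rather a high-level sketch of the strategy Roessler actually uses in that reference --- factorization $f=p\circ i$ into a regular immersion and a smooth projection, the Bismut--Lebeau immersion formula for $i$, explicit torsion computations on projective bundles for $p$, and the appearance of the $R$-genus correction when matching against arithmetic Riemann--Roch. That outline is broadly accurate as a description of \cite{Roe}, but it is not what the present paper does, which is simply to quote the result.

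If your intent was to reproduce the paper's own argument, the correct answer is a one-line citation. If your intent was to supply an independent proof, be aware that your sketch, while pointing in the right direction, leaves all the hard work undone: the ``analytic bookkeeping'' you flag at the end (compatibility of torsion forms under composition, the precise form of the secondary class $\widetilde{\theta}^k$, and the gluing of the two cases) is the substance of Roessler's paper and cannot be waved through in a paragraph.
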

\begin{proof}
This is \cite[Theorem 3.6]{Roe}.
\end{proof}

\subsection{The $\gamma$-filtration of arithmetic $K_0$-theory}
Let $X$ be a quasi-projective scheme over $\Z$ with smooth generic fibre as in last subsection. In this subsection, we shall recall the $\gamma$-filtration of the $\lambda$-ring $\widehat{K_0}(X)$ and prove some basic facts.

Recall that the $\gamma$-operations on a $\lambda$-ring are defined by the formula
\begin{displaymath}
\gamma_t(x)=\sum_{i\geq0}\gamma^i(x)t^i:=\lambda_{t/(1-t)}(x).
\end{displaymath}
By construction, the $\gamma^i$ also define a pre-$\lambda$-ring structure on $\widehat{K_0}(X)$: that is, for all positive integers $k$ we have $\gamma^0(x)=1$, $\gamma^1(x)=x$ and
\begin{displaymath}
\gamma^k(x+y)=\sum_{i=0}^k\gamma^i(x)\gamma^{k-i}(y).
\end{displaymath}
Moreover, it follows from the definition that if $u$ is the class of a hermitian line bundle on $X$, then $\gamma_t(u-1)=1+(u-1)t$ and $\gamma_t(1-u)=\sum_{i\geq0}(1-u)^it^i$. This implies that $\gamma^i(u-1)=0$ for $i>1$ and $\gamma^i(1-u)=(1-u)^i$ for $i\geq0$.

Now, for any generator $(\overline{E},\eta)$ of $\widehat{K_0}(X)$, define $\varepsilon(\overline{E},\eta)={\rm rk}(E)$. This map extends to an augmentation on $\widehat{K_0}(X)$, namely a $\lambda$-ring homomorphism from $\widehat{K_0}(X)$ to $\Z$. We then construct the $\gamma$-filtration $F^n\widehat{K_0}(X) (n\geq0)$ of $\widehat{K_0}(X)$ as follows. For $n=0$, $F^0\widehat{K_0}(X):=\widehat{K_0}(X)$, for $n=1$, $F^1\widehat{K_0}(X):={\rm ker}(\varepsilon)$ and for $n\geq2$, $F^n\widehat{K_0}(X)$ is defined to be the additive subgroup generated by the elements $\gamma^{r_1}(x_1)\cdots\gamma^{r_s}(x_s)$, where $x_1,\ldots,x_s\in F^1\widehat{K_0}(X)$ and $\sum_{i=1}^sr_i\geq n$. Therefore $F^0\widehat{K_0}(X)\supseteq F^1\widehat{K_0}(X)\supseteq F^2\widehat{K_0}(X)\supseteq\cdots$ and it is easily checked that the $F^n\widehat{K_0}(X)$ are ideals that form a ring filtration. We shall denote by ${\rm Gr}^i\widehat{K_0}(X)$ the quotient group $F^i\widehat{K_0}(X)/{F^{i+1}\widehat{K_0}(X)}$.

\begin{prop}\label{gamma2}
Let $j\geq1$ and let $n\geq0$ be an integer. If $x\in F^n\widehat{K_0}(X)$, then
\begin{displaymath}
\psi^j(x)=j^nx\quad{\rm mod}\quad F^{n+1}\widehat{K_0}(X).
\end{displaymath}
\end{prop}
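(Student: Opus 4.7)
The plan is to induct on $n$, reducing to a universal identity about $\gamma^r(y)$ for $y\in F^1\widehat{K_0}(X)$ that I would then verify by the splitting principle for $\lambda$-rings.

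The case $n=0$ is just the assertion $\varepsilon(\psi^j(x))=\varepsilon(x)$, which holds because $\psi^j$ is a ring endomorphism of $\widehat{K_0}(X)$ preserving ranks of hermitian vector bundles. For $n=1$, I would start from Newton's identity in the form
\[
\psi_t(y)=-t\,\frac{d}{dt}\log\lambda_{-t}(y)
\]
combined with the substitution $\lambda_s(y)=\gamma_{s/(1+s)}(y)$. Since $\gamma^i(y)\in F^i$ for all $i\geq 1$, one has $\gamma_v(y)\equiv 1+yv\pmod{F^2}$, and a short logarithmic-differentiation computation gives $\psi_t(y)\equiv y\cdot t/(1-t)^2\pmod{F^2}$, so that $\psi^k(y)\equiv ky\pmod{F^2}$ for all $k\geq 1$.

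For the inductive step, recall that $F^n$ is additively generated by monomials $\gamma^{r_1}(x_1)\cdots\gamma^{r_s}(x_s)$ with $x_i\in F^1$ and $\sum_i r_i\geq n$; those with $\sum_i r_i>n$ already lie in $F^{n+1}$, so it is enough to treat the case $\sum_i r_i=n$. Because $\psi^j$ is multiplicative, the claim for such a monomial reduces to the following key lemma: for every $y\in F^1$ and $r\geq 1$,
\[
\psi^j(\gamma^r(y))\equiv j^r\gamma^r(y)\pmod{F^{r+1}}.
\]
Indeed, writing $\psi^j(\gamma^{r_i}(x_i))=j^{r_i}\gamma^{r_i}(x_i)+\epsilon_i$ with $\epsilon_i\in F^{r_i+1}$ and expanding the product, the main term equals $j^n x$ while every cross term acquires at least one extra factor from $F^2$, so lies in $F^{n+1}$ thanks to $F^a\cdot F^b\subseteq F^{a+b}$.

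To prove the key lemma I would invoke the splitting principle: after pullback along an iterated flag bundle (faithfully flat, with injective pullback on $\widehat{K_0}$ up to the filtration levels at hand), one may assume $y=\sum_{i=1}^m(\ell_i-1)$ where the $\ell_i$ are classes of hermitian line bundles. Then $\gamma^r(y)=e_r(\ell_1-1,\ldots,\ell_m-1)$ and $\psi^j(\ell_i)=\ell_i^{\otimes j}$, so
\[
\psi^j(\gamma^r(y))=e_r(\ell_1^j-1,\ldots,\ell_m^j-1).
\]
The expansion $\ell_i^j-1=j(\ell_i-1)+w_i$ with $w_i=\sum_{k\geq 2}\binom{j}{k}(\ell_i-1)^k\in F^2$ shows, after expanding the elementary symmetric polynomial, that the leading contribution is $j^r e_r(\ell_1-1,\ldots,\ell_m-1)=j^r\gamma^r(y)$ while every other term contains at least one $w_i$-factor and hence lies in $F^{r+1}$. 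The main obstacle I anticipate is a careful formulation of the splitting principle in the arithmetic setting, where the metrics on the tautological quotients on a flag bundle must be chosen consistently so that the pullback of the $\gamma$-filtration behaves as in the algebraic case; once that is granted, everything else is a universal $\lambda$-ring identity.
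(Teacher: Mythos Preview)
Your argument is essentially sound, but it is worth knowing that the paper does not give a proof at all: it simply observes that the statement holds in \emph{any} augmented $\lambda$-ring and cites the last lemma on p.~96 of \cite{RSS}. So the content of this proposition is purely algebraic, depending only on the $\lambda$-ring axioms and the definition of the $\gamma$-filtration; nothing about $\widehat{K_0}(X)$, hermitian metrics, or the arithmetic splitting principle is needed.

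From that point of view your detour through the geometric splitting principle is unnecessary and is also where your only real gap sits. You acknowledge this yourself (``the main obstacle I anticipate\dots''), but the obstacle can be bypassed entirely. Your key lemma
\[
\psi^j(\gamma^r(y))\equiv j^r\gamma^r(y)\pmod{F^{r+1}}\qquad(y\in F^1)
\]
is a \emph{universal polynomial identity}: both sides, and indeed $\psi^j(\gamma^r(y))$ expressed in the $\gamma^i(y)$, are given by fixed integer polynomials determined by the $\lambda$-ring axioms alone. To check such an identity it suffices to verify it in the free $\lambda$-ring on one generator (equivalently, to perform your computation with formal line elements $\ell_i$ and $y=\sum(\ell_i-1)$, which is the algebraic ``verification principle'' rather than a geometric flag-bundle construction). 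Your expansion $\ell_i^j-1=j(\ell_i-1)+w_i$ with $w_i\in F^2$ is then exactly the right calculation and finishes the proof without any appeal to flag varieties or compatibility of metrics. This also sidesteps a point you did not address: a general $y\in F^1\widehat{K_0}(X)$ may contain a differential-form contribution $\eta$, which is not covered by the geometric splitting principle for hermitian bundles but \emph{is} covered automatically once the identity is recognised as universal.

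In short: your inductive scheme and your computation are correct, but recast the splitting step as a universal $\lambda$-ring identity and the anticipated obstacle disappears; the paper's own approach is simply to quote that this is already known for arbitrary augmented $\lambda$-rings.
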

\begin{proof}
This statement is actually correct for any augmented $\lambda$-ring, see the last lemma in \cite[p. 96]{RSS}.
\end{proof}

\begin{defn}\label{nilpotent}
The $\gamma$-filtration of an augmented $\lambda$-ring $R$ is called locally nilpotent, if for every $x\in F^1R$, there exists a number $N(x)\in \mathbb{N}$, depending on $x$, such that $\gamma^{r_1}(x)\cdots\gamma^{r_d}(x)=0$ whenever $\sum_{i=1}^dr_i>N(x)$. It is called nilpotent, if there exists a number $N\in \mathbb{N}$, such that $F^nR=0$ for all $n>N$.
\end{defn}

It was shown by Roessler in \cite[Prop. 4.5]{Roe} that the $\gamma$-filtration of $\widehat{K_0}(X)$ is locally nilpotent, hence $\widehat{K_0}(X)$ fulfills the conditions in the following proposition.

\begin{prop}\label{gamma3}
Let $R$ be an augmented $\lambda$-ring with locally nilpotent $\gamma$-filtration. Then for any $n\geq0$,
\begin{displaymath}
F^nR_\Q=\bigoplus_{i=n}^\infty V_i
\end{displaymath}
where $V_i$ is the $k^i$-eigenspace of $\psi^k$ on $R_\Q$, $k>1$, and $V_i$ does not depend on $k$.
\end{prop}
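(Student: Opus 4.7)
I would prove the proposition in three stages after fixing $k>1$: (a) show that $R_\Q$ decomposes into eigenspaces of $\psi^k$, (b) identify the $\gamma$-filtration with the descending partial sums of these eigenspaces, and (c) show independence of $k$.

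For (a), it is enough by the splitting $R_\Q=\Q\cdot 1\oplus F^1R_\Q$ (with $1\in V_0$) to produce a decomposition for each $x\in F^1R_\Q$. I would consider the $\lambda$-subring $\Lambda(x)\subseteq R_\Q$ generated by $x$: it is spanned by the monomials $\gamma^{r_1}(x)\cdots\gamma^{r_d}(x)$, and local nilpotence makes it a finite-dimensional $\Q$-vector space. By the Newton relations, $\psi^k$ is a universal polynomial in the $\lambda$- (equivalently $\gamma$-) operations, so $\Lambda(x)$ is $\psi^k$-stable. The induced filtration $F^m\Lambda(x)=F^mR_\Q\cap\Lambda(x)$ is finite (it must stabilize at $0$ since $\Lambda(x)$ is finite-dimensional and the natural $\gamma$-degree filtration, zero beyond $N(x)$ by local nilpotence, refines it). By Proposition~\ref{gamma2}, $\psi^k$ acts as the scalar $k^m$ on each graded piece $F^m\Lambda(x)/F^{m+1}\Lambda(x)$, and iterating this relation shows
\begin{displaymath}
(\psi^k-k^M)(\psi^k-k^{M-1})\cdots(\psi^k-k^1)(x)\in F^{M+1}\Lambda(x)=0
\end{displaymath}
for $M\geq N(x)$. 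The $M$ scalars $k,k^2,\ldots,k^M$ being distinct, $\psi^k$ acts semisimply on $\Lambda(x)$ with spectrum in $\{k^i:i\geq 0\}$, giving $x\in\bigoplus_{i\geq 1}V_i$ and hence $R_\Q=\bigoplus_{i\geq 0}V_i$.

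For (b), the inclusion $F^nR_\Q\subseteq\bigoplus_{i\geq n}V_i$ follows by applying the same iteration to any $x\in F^nR_\Q$. For the reverse, I proceed by induction on $n$. Case $n=0$ is trivial; case $n=1$ uses $\varepsilon\circ\psi^k=\varepsilon$, so $v\in V_i$ ($i\geq 1$) satisfies $\varepsilon(v)=k^i\varepsilon(v)$ in $\Q$, forcing $\varepsilon(v)=0$, i.e.\ $v\in F^1R_\Q$. For $n\geq 2$, assume $\bigoplus_{j\geq n-1}V_j\subseteq F^{n-1}R_\Q$ and pick $v\in V_i$ with $i\geq n$. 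The inductive hypothesis places $v$ in $F^{n-1}R_\Q$, and Proposition~\ref{gamma2} then gives $k^iv=\psi^k(v)\equiv k^{n-1}v\pmod{F^nR_\Q}$; as $k^i-k^{n-1}\in\Q^\times$, one may divide to conclude $v\in F^nR_\Q$.

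For (c), let $k'>1$ be a second integer. Since $\psi^k$ and $\psi^{k'}$ commute, $\psi^{k'}$ preserves every $V_i$. As $V_i\subseteq F^iR_\Q$ by (b), Proposition~\ref{gamma2} applied to $\psi^{k'}$ yields $\psi^{k'}(v)-(k')^iv\in F^{i+1}R_\Q$ for $v\in V_i$; but this difference also belongs to $V_i$, so it lies in $V_i\cap F^{i+1}R_\Q=V_i\cap\bigoplus_{j\geq i+1}V_j=0$. Hence $\psi^{k'}(v)=(k')^iv$, identifying $V_i$ as the $(k')^i$-eigenspace of $\psi^{k'}$ and proving independence. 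The principal technical obstacle is verifying, in (a), that $\Lambda(x)$ is finite-dimensional and $\psi^k$-stable with a vanishing induced $\gamma$-filtration past $N(x)$; this is the step that genuinely uses local nilpotence together with the $\lambda$-ring structure, whereas the remainder of the argument is essentially iterated linear algebra with Proposition~\ref{gamma2}.
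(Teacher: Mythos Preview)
Your proof is correct and follows the standard argument that the paper invokes by citing \cite[p.~97, Theorem~1]{RSS}; the paper gives no independent proof. One small imprecision in part (a): the claim that the induced filtration $F^mR_\Q\cap\Lambda(x)$ vanishes past $N(x)$ because the $\gamma$-degree filtration ``refines'' it has the containment the wrong way (monomials of $\gamma$-degree $\geq m$ lie in $F^mR_\Q$, giving $G^m\subseteq F^mR_\Q\cap\Lambda(x)$, not the reverse), so this step as written does not force the induced filtration to zero. The fix is to run the iteration directly with the $\gamma$-degree filtration $G^m$ on $\Lambda(x)$: the universal formulas expressing $\psi^k$ in terms of the $\gamma^i$ show that $\psi^k$ preserves $G^m$ and acts as $k^m$ on $G^m/G^{m+1}$, so $(\psi^k-k^M)\cdots(\psi^k-k)(x)\in G^{M+1}=0$ for $M\geq N(x)$, which is exactly what you need.
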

\begin{proof}
This is in complete analogy to the proof of \cite[p. 97, Theorem 1]{RSS}.
\end{proof}

\begin{cor}\label{nilpotent2}
If $n>{\rm dim}(X)$, then $F^n\widehat{K_0}(X)_\Q=0$ .
\end{cor}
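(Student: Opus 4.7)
The plan is to combine Proposition~\ref{gamma3} with the Gillet--Soul\'e arithmetic Chern character isomorphism. First, since $\widehat{K_0}(X)_\Q$ is an augmented $\lambda$-ring whose $\gamma$-filtration is locally nilpotent by \cite[Prop.~4.5]{Roe}, Proposition~\ref{gamma3} supplies the eigenspace decomposition
\begin{displaymath}
F^n\widehat{K_0}(X)_\Q=\bigoplus_{i\geq n}V_i,
\end{displaymath}
where $V_i$ is the $k^i$-eigenspace of $\psi^k$ on $\widehat{K_0}(X)_\Q$ for any fixed $k>1$. Consequently, the corollary will follow if one shows that $V_i=0$ whenever $i>\dim X$.

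Second, I would invoke the arithmetic Chern character isomorphism of Gillet--Soul\'e
\begin{displaymath}
\widehat{\mathrm{ch}}:\widehat{K_0}(X)_\Q\xrightarrow{\sim}\bigoplus_{p\geq 0}\widehat{\mathrm{CH}}^p(X)_\Q,
\end{displaymath}
which intertwines $\psi^k$ with the direct sum of the homotheties $k^p\cdot\mathrm{id}$ on $\widehat{\mathrm{CH}}^p(X)_\Q$. This is compatible with the formula $\psi^k(\omega)=k\cdot\phi^k(\omega)$ recalled in the excerpt, since a Bott--Chern class of type $(p-1,p-1)$ is sent into the degree-$p$ piece of the arithmetic Chow ring. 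In particular, each eigenspace $V_p$ is identified with $\widehat{\mathrm{CH}}^p(X)_\Q$.

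Finally, $\widehat{\mathrm{CH}}^p(X)_\Q=0$ for $p>\dim X$: codimension-$p$ cycles on $X$ vanish as soon as $p$ exceeds the Krull dimension $\dim X$, and the Bott--Chern contribution $\widetilde{A}^{p-1,p-1}(X)$ is supported on the complex manifold $X(\C)$ of complex dimension at most $\dim X-1$, so $(p-1,p-1)$-forms vanish once $p-1>\dim X-1$. Together these give $F^n\widehat{K_0}(X)_\Q=0$ for every $n>\dim X$.

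The main obstacle is verifying the applicability of the arithmetic Chern character in the exact generality assumed here (quasi-projective over $\Z$ with smooth generic fibre); the Gillet--Soul\'e theorem is usually stated under the additional assumption that $X$ is regular, so if this is not already built into the standing hypotheses one must either cite a suitable extension or, as a more hands-on alternative, use Proposition~\ref{gamma2} together with the explicit action of $\psi^k$ on the generators of $\widehat{K_0}(X)$ to rule out non-zero eigenvalues beyond $k^{\dim X}$ directly.
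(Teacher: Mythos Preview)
Your argument via the arithmetic Chern character is morally correct and would give a clean proof when it applies, but the regularity concern you flag is genuine: the standing hypothesis in this section is only ``quasi-projective over $\Z$ with smooth generic fibre'', and the schemes to which the corollary is later applied ($\widetilde{H}_{g,d,1}$, $\widetilde{Z}_{g,d,1}$) are shown to be flat over $\Z$ and smooth over $\Q$ but not regular. So the Gillet--Soul\'e isomorphism is not obviously available, and your main line of argument has a real gap in this generality.

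The paper's proof avoids this entirely and is closer to the ``hands-on alternative'' you gesture at. Instead of passing through $\widehat{\mathrm{CH}}^*(X)_\Q$, it uses the forgetful $\lambda$-ring map $\widehat{K_0}(X)\to K_0(X)$: since $F^nK_0(X)=0$ for $n>\dim X$, any $x\in F^n\widehat{K_0}(X)$ with $n>\dim X$ lies in the image of $\widetilde{A}(X)$, say $x=\eta$. One then checks directly from the $\lambda$-ring structure on $\Gamma(X)$ that the degree-$i$ component $\eta^{(i)}$ sits in the eigenspace $V_{i+1}$; since $\eta^{(i)}=0$ for $i>\dim X(\C)=\dim X-1$, one gets $\eta\in\bigoplus_{i=1}^{\dim X}V_i$. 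Intersecting with $F^n\widehat{K_0}(X)_\Q=\bigoplus_{i\geq n}V_i$ from Proposition~\ref{gamma3} forces $\eta=0$ in $\widehat{K_0}(X)_\Q$. This is the same eigenspace bookkeeping you use, but the input is the nilpotence of the algebraic $\gamma$-filtration plus the explicit Adams action on forms, rather than the arithmetic Chern character; that substitution is exactly what makes the argument go through without regularity.
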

\begin{proof}
Firstly notice that the $\gamma$-filtration of the algebraic Grothendieck group $K_0(X)$ is nilpotent, precisely $F^nK_0(X)=0$ whenever $n$ is greater than the dimension of $X$. By construction the forgetful map $\widehat{K_0}(X)\to K_0(X)$ is a $\lambda$-ring morphism, then any element $x\in F^n\widehat{K_0}(X)$ is represented by a smooth form $\eta$ if $n>{\rm dim}X$. We claim that $\eta=0$ in $F^n\widehat{K_0}(X)_\Q$, which implies the statement in this corollary. Indeed, write $\eta=\sum_{i\geq0}\eta^{(i)}$ where $\eta^{(i)}$ is the $i$-th component of $\eta$, by the definition of the $\lambda$-ring structure, we know that $\eta^{(i)}\in V_{i+1}$. Since $\eta^{(i)}=0$ when $i>{\rm dim}X(\C)$, we have $\eta\in\bigoplus_{i=1}^{{\rm dim}X} V_i$. Then our claim follows from Proposition~\ref{gamma3}
\end{proof}

Now, we introduce a truncated arithmetic Chern character
\begin{displaymath}
\widehat{{\rm ch}}: \widehat{K_0}(X)[1/k]\to
\Z[1/k]\oplus\widehat{{\rm Pic}}(X)[1/k].
\end{displaymath}
This Chern character is an abelian group
homomorphism defined as follows: (i). for a hermitian vector
bundle $\overline{E}$ on $X$, $\widehat{{\rm
ch}}(\overline{E}/{k^t})=({\rm rk}(E)/{k^t},{\rm
det}(\overline{E})^{1/{k^t}})$; (ii). for an element $\omega \in
\widetilde{A}(X)$, $\widehat{{\rm
ch}}(\omega/{k^t})=(0,(\mathcal{O}_X,\omega)^{1/{k^t}})$
where $(\mathcal{O}_X,\omega)$ stands for
the trivial bundle with the metric given by $\parallel
1\parallel^2=e^{-\omega_0}$. By using \cite[Prop. 1.2.5]{GS2}, one can immediately check that this definition is compatible with the generating relation of $\widehat{K_0}(X)$. Moreover, let us introduce the paring
\begin{displaymath}
(r_1/{k^{t_1}},m_1^{1/{k^{l_1}}})\bullet(r_2/{k^{t_2}},m_2^{1/{k^{l_2}}}):=(r_1r_2/{k^{t_1+t_2}},m_2^{r_1/{k^{t_1+l_2}}}\otimes
m_1^{r_2/{k^{t_2+l_1}}})
\end{displaymath}
in the group $\Z[1/k]\oplus\widehat{{\rm Pic}}(X)[1/k]$.
The paring $\bullet$ makes this group into a commutative ring. It
can be shown that the arithmetic Chern character is a ring
homomorphism, by the properties of the determinant and by the
definition of the arithmetic Grothendieck group.

In particular, by composing with the projection to the second factor, we get a group homomorphism
\begin{displaymath}
{\rm det}: \widehat{K_0}(X)\to \widehat{{\rm Pic}}(X).
\end{displaymath}
The main result of this subsection is the following.

\begin{thm}\label{gamma}
The morphism ${\rm det}$ induces an isomorphism ${\rm Gr}^1(\widehat{K_0}(X))\cong\widehat{{\rm Pic}}(X)$.
\end{thm}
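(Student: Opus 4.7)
The plan is to construct an explicit candidate inverse $\phi: \widehat{{\rm Pic}}(X) \to {\rm Gr}^1(\widehat{K_0}(X))$ to the morphism that ${\rm det}$ should induce, and to verify that both compositions are identities. Define $\phi([\overline{L}]) := [\overline{L} - 1]$: since $\overline{L} - 1$ has rank zero it lies in $F^1$, and the identity
\begin{displaymath}
(\overline{L} \otimes \overline{M}) - 1 - (\overline{L} - 1) - (\overline{M} - 1) = -(\overline{L} - 1)(\overline{M} - 1) \in F^1 \cdot F^1 \subseteq F^2
\end{displaymath}
shows that $\phi$ is a well-defined group homomorphism. The composition ${\rm det} \circ \phi$ is then the identity on $\widehat{{\rm Pic}}(X)$ since ${\rm det}(\overline{L} - 1) = \overline{L}$, which immediately yields injectivity of $\phi$ and surjectivity of ${\rm det}|_{F^1}$ onto $\widehat{{\rm Pic}}(X)$.

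What remains is to show that the induced map $\overline{{\rm det}}: F^1/F^2 \to \widehat{{\rm Pic}}(X)$ is well-defined and injective. For well-definedness, one checks that ${\rm det}$ annihilates the generators of $F^2$: on a product $xy$ of two rank-normalized hermitian vector bundle classes $x = \overline{E} - {\rm rk}(E)$, $y = \overline{F} - {\rm rk}(F)$, the classical isometry ${\rm det}(\overline{E} \otimes \overline{F}) \cong {\rm det}(\overline{E})^{\otimes {\rm rk}(F)} \otimes {\rm det}(\overline{F})^{\otimes {\rm rk}(E)}$ forces ${\rm det}(xy) = 0$; products involving form classes $\omega \in \widetilde{A}(X)$ are handled via the $*$-product formula, which (through the $(dd^c \omega) \wedge \eta$ term) contributes only pieces of strictly positive form-degree, hence with trivial determinant; and $\gamma^n(x)$ for $n \geq 2$ unfolds polynomially into products of $\lambda^i(x)$, reducing to the product case.

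For injectivity of $\overline{{\rm det}}$, I would show that every $x \in F^1$ satisfies $x \equiv {\rm det}(x) - 1 \pmod{F^2}$, splitting into cases according to the generator type. For $\omega \in \widetilde{A}^{p,p}(X)$ with $p \geq 1$, ${\rm det}(\omega) = 0$ since $\omega_0 = 0$, and $\omega \in F^{p+1} \subseteq F^2$ by Proposition~\ref{gamma3} together with $\psi^k(\omega) = k^{p+1}\omega$. For $\omega \in \widetilde{A}^{0,0}(X)$, the Bott-Chern relation associated to the non-isometric identity map $(\mathcal{O}_X, 1) \to (\mathcal{O}_X, e^{-\omega})$ gives $(\mathcal{O}_X, e^{-\omega}) - 1 \equiv \omega \pmod{F^2}$. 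For $x = \overline{E} - r$, apply the arithmetic splitting principle on the flag bundle $\pi: F(\overline{E}) \to X$: endow the successive line bundle quotients $\overline{L}_1, \ldots, \overline{L}_r$ with the subspace/quotient metrics, so that ${\rm det}(\pi^*\overline{E}) \cong \bigotimes_i \overline{L}_i$ holds isometrically. Consequently, the degree-$0$ component of the total Bott-Chern correction vanishes and its higher-degree components lie in $F^2$ by the previous case; then additivity of $\phi$ and injectivity of $\pi^*$ on ${\rm Gr}^1$ yield $\overline{E} - r \equiv {\rm det}(\overline{E}) - 1 \pmod{F^2}$ on $X$.

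The main obstacle is the arithmetic splitting-principle step, specifically controlling the Bott-Chern secondary classes of the filtration so that the entire correction lands in $F^2$. The vanishing of the degree-$0$ component relies on the compatibility of the determinant metric with subspace/quotient metrics (a consequence of the standard characterization of determinant metrics), while the positive-degree components are absorbed via Proposition~\ref{gamma3}. A secondary subtlety is the integrality of the $\gamma$-filtration in low degrees, since Proposition~\ref{gamma3} is formulated rationally; the integral statement of the theorem should then follow by tracking the integrality of the explicit elements $\overline{L} - 1$ and the relations among them.
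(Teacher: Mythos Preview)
Your overall strategy matches the paper's: define $\phi(\overline{L}) = \overline{L}-1$, verify ${\rm det}\circ\phi = {\rm Id}$, show ${\rm det}$ kills $F^2$, and show $\phi\circ{\rm det} = {\rm Id}$ on ${\rm Gr}^1$. Two steps, however, are not yet sound.

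First, the assertion that ``$\gamma^n(x)$ for $n\ge 2$ unfolds polynomially into products of $\lambda^i(x)$, reducing to the product case'' is incorrect: the identity $\gamma^n(x)=\sum_{j=1}^n\binom{n-1}{j-1}\lambda^j(x)$ is a \emph{linear} combination, not a product of two elements of $F^1$, so it does not feed into your product argument. The paper handles this differently: it uses the pre-$\lambda$ additivity $\gamma^k(x+y)=\sum\gamma^i(x)\gamma^{k-i}(y)$ together with the splitting principle to reduce to $x=\overline{E}-1$ or $x=1-\overline{F}$ with $\overline{E},\overline{F}$ hermitian line bundles, where $\gamma^i(\overline{E}-1)=0$ and $\gamma^i(1-\overline{F})=(1-\overline{F})^i$ are immediate. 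For the form part of a generator one needs Lemma~\ref{vanish} (the function part of $\gamma^k(\eta)$ vanishes for $k\ge 2$), which is exactly the computation that makes ${\rm det}$ trivial on such terms.

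Second, and more seriously, your injectivity argument invokes Proposition~\ref{gamma3} to place a $(p,p)$-form with $p\ge 1$ into $F^{p+1}$. As you note yourself, that proposition is only valid after tensoring with $\Q$, whereas the theorem is an integral statement; ``tracking the integrality of the explicit elements $\overline{L}-1$'' does not recover this, because the element in question is a form, not a combination of line bundles. The paper closes this gap with Lemma~\ref{vanish3}: given any $\eta\in\widetilde{A}(X)$ with $\eta^{(0)}=0$, one explicitly solves the equation $\gamma^2(\omega)=\eta$ (equivalently $\omega+\tfrac12\omega\wedge{\rm dd}^c\omega-\tfrac12\psi^2(\omega)=\eta$) degree by degree, exhibiting $\eta$ as an honest element of $F^2\widehat{K_0}(X)$. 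This constructive step is the missing idea; once you have it, the splitting-principle reduction for $\overline{E}-r$ goes through exactly as you outline, since the Bott--Chern correction of the filtration has vanishing degree-$0$ part (rank is additive) and its higher-degree parts lie in $F^2$ by Lemma~\ref{vanish3} rather than by the rational eigenspace decomposition.
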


To prove this theorem, we need the following lemmas.

\begin{lem}\label{vanish}
Let $x\in \widehat{K_0}(X)$ which is represented by a smooth form $\eta$. Then for any $k\geq2$, the function part of $\gamma^k(\eta)$ vanishes.
\end{lem}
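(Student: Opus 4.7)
My plan is to prove the vanishing by a direct computation of $\gamma^{k}(\eta)$ inside the $\lambda$-ring $\Gamma(X)$, whose $\lambda$-structure governs the $\lambda$-operations on the image of $\widetilde{A}(X)$ in $\widehat{K_0}(X)$. Extracting the coefficient of $t^{k}$ from the definition $\gamma_{t}(\eta)=\lambda_{t/(1-t)}(\eta)$ gives the purely formal identity
\[
\gamma^{k}(\eta)=\sum_{i=1}^{k}\binom{k-1}{k-i}\lambda^{i}(\eta),
\]
so it suffices to control the $(0,0)$-component of each $\lambda^{i}(\eta)$.

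The essential geometric input is that in $\Gamma(X)$ the product of two form-valued elements satisfies $(0,\mu)\ast(0,\nu)=(0,\mathrm{dd}^{c}\mu\wedge\nu)$, and $\mathrm{dd}^{c}$ raises bidegree by $(1,1)$. Consequently any product of two or more form-valued classes has vanishing $(0,0)$-component. Rewriting Newton's identity in the form
\[
i\,\lambda^{i}=(-1)^{i-1}\psi^{i}+\sum_{j=1}^{i-1}(-1)^{i+j+1}\lambda^{j}\,\psi^{i-j},
\]
the products on the right contribute nothing to the $(0,0)$-part, so only the linear term survives. Combined with $\psi^{k}(\omega)=k\,\phi^{k}(\omega)$, which acts on $(0,0)$-forms as multiplication by $k$, this forces
\[
\lambda^{i}(\eta)^{(0,0)}=(-1)^{i-1}\eta^{(0,0)}.
\]

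Plugging this back into the expression for $\gamma^{k}(\eta)$ and substituting $j=k-i$,
\[
\gamma^{k}(\eta)^{(0,0)}=\eta^{(0,0)}\sum_{i=1}^{k}(-1)^{i-1}\binom{k-1}{k-i}=(-1)^{k-1}\eta^{(0,0)}(1-1)^{k-1},
\]
which vanishes as soon as $k\geq 2$. There is no serious technical obstacle, only some bookkeeping: one has to verify the elementary product identity in $\Gamma(X)$, recall the correct form of Newton's recursion, and execute the binomial cancellation. Conceptually the same vanishing can be deduced from Proposition~\ref{gamma3}, since $\eta$ has rank $0$ and hence $\gamma^{k}(\eta)\in F^{k}\widehat{K_0}(X)\subset\bigoplus_{i\geq k}V_{i}$ rationally, while a nonzero $(0,0)$-form lies in $V_{1}$; but I prefer the Newton-identity route because it produces an integral equality and makes the cancellation explicit.
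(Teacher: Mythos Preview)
Your proof is correct and follows essentially the same route as the paper: both compute the function part of $\lambda^{i}(\eta)$ via Newton's recursion (using that products of form-valued elements in $\Gamma(X)$ have vanishing $(0,0)$-component), obtain $\lambda^{i}(\eta)^{(0)}=(-1)^{i-1}\eta^{(0)}$, and then conclude by the binomial identity $\sum_{i=1}^{k}(-1)^{i-1}\binom{k-1}{i-1}=(1-1)^{k-1}=0$. Your write-up is slightly more explicit about why the cross-terms vanish, but the argument is the same.
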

\begin{proof}
It is well known that the $\lambda$-operations $\lambda^i$ and corresponding Adams operations $\psi^k$ are related by the following Newton formula
\begin{displaymath}
\psi^k(x)-\lambda^1(x)\psi^{k-1}(x)+\cdots+(-1)^{k-1}\lambda^{k-1}(x)\psi^1(x)=(-1)^{k+1}k\lambda^k(x).
\end{displaymath}
Then by the construction of the ring structure of $\Gamma(X)$, the function part of $\lambda^k(x)$ is $(-1)^{k+1}\eta^{(0)}$.
Next, we know that the relation between the $\gamma$-operations and $\lambda$-operations is
\begin{displaymath}
\gamma^k(x)=\sum_{j=1}^k\binom{k-1}{j-1}\lambda^j(x).
\end{displaymath}
Then our lemma follows from the combinatorial identity
\begin{displaymath}
\sum_{j=1}^k\binom{k-1}{j-1}(-1)^{j+1}=\sum_{j=0}^{k-1}\binom{k-1}{j}(-1)^{j}=(1-1)^{k-1}=0.
\end{displaymath}
\end{proof}

\begin{rem}\label{vanish2}
Actually, we conjecture that for any $k\geq2$, the $i$-th component of $\gamma^k(\eta)$ vanishes if $i<k-1$.
\end{rem}

\begin{lem}\label{vanish3}
Let $x\in \widehat{K_0}(X)$ which is represented by a smooth form $\eta$ and suppose that the function part of $\eta$ vanishes, then $x\in F^2\widehat{K_0}(X)$.
\end{lem}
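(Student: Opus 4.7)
The plan is to reduce to pure-degree pieces and prove the claim by a strong downward induction on the form degree. Decompose $\eta = \sum_{p\geq 0}\eta^{(p)}$ with $\eta^{(p)} \in \widetilde{A}^{p,p}(X)$; by hypothesis $\eta^{(0)} = 0$, and since $F^2\widehat{K_0}(X)$ is closed under addition it suffices to prove the statement for each pure-degree piece $\eta \in \widetilde{A}^{p,p}(X)$ with $p \geq 1$. The downward induction on $p$ uses the strong inductive hypothesis that the entire real vector space $\widetilde{A}^{p',p'}(X)$ already lies inside $F^2\widehat{K_0}(X)$ for every $p' > p$; the base case $p' > {\rm dim}\,X(\C)$ is vacuous since $\widetilde{A}^{p',p'}(X)$ vanishes there.

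Fix $p\geq 1$ and $\eta \in \widetilde{A}^{p,p}(X)$. As an element of $\Gamma(X)$, $(0,\eta)$ is pure of degree $p+1$, so $\psi^k(\eta) = k^{p+1}\eta$, and iterated products in $\widehat{K_0}(X)$ satisfy $\eta^j = ({\rm dd}^c\eta)^{j-1}\wedge\eta$, lying in $\widetilde{A}^{p',p'}(X)$ for some $p' > p$ whenever $j\geq 2$. Expanding $\gamma^k = \sum_{j=1}^{k}\binom{k-1}{j-1}\lambda^j$ and using the Newton identity to express each $\lambda^j(\eta)$ recursively, one arrives at
\begin{displaymath}
\gamma^k(\eta) \;=\; a_k\,\eta \;+\; \sum_{j\geq 2}c_{k,j}\,\eta^j, \qquad a_k \;=\; \sum_{i=0}^{k-1}\binom{k-1}{i}(-1)^{i}(i+1)^p \;\in\; \Z,
\end{displaymath}
with $c_{k,j}\in\R$. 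Since $\gamma^k(\eta) \in F^k \subseteq F^2$ and each higher term $c_{k,j}\eta^j$ is a smooth form in $\widetilde{A}^{p',p'}(X)\subseteq F^2\widehat{K_0}(X)$ by the inductive hypothesis, one deduces $a_k\,\eta \in F^2\widehat{K_0}(X)$ for every $k\geq 2$.

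The integers $a_k$ are, up to sign, the forward finite differences $\Delta^{k-1}f(1)$ of the polynomial $f(n) = n^p$: applying Newton's forward-difference formula $f(n) = \sum_{k\geq 0}\binom{n-1}{k}\Delta^k f(1)$ at $n=0$ yields the key identity $\sum_{k=2}^{p+1} a_k = -1$ (and $a_k = 0$ for $k > p+1$). Summing the relations $a_k\eta \in F^2\widehat{K_0}(X)$ for $k = 2,\ldots,p+1$ therefore gives $-\eta \in F^2\widehat{K_0}(X)$, closing the induction. The chief obstacle to be overcome is the appearance of rational, indeed real, coefficients in the $\lambda$-operation formulas --- for example, the $\tfrac{1}{2}\eta^2$ appearing in $\lambda^2(\eta)$ --- which naively threaten to push the computation outside the integral ideal $F^2\widehat{K_0}(X)$; the strong form of the inductive hypothesis is designed precisely to absorb these, and Lemma~\ref{vanish} provides the compatible vanishings of function parts that let the induction run on $\gamma^k(\eta)$ without interference.
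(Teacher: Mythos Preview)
Your argument is correct and takes a genuinely different route from the paper's.  The paper proceeds \emph{constructively}: given $\eta$ with $\eta^{(0)}=0$, it manufactures a single $\omega\in\widetilde{A}(X)$ with $\gamma^{2}(\omega)=\eta$ by solving the equation
\[
\omega+\tfrac12\,\omega\wedge{\rm dd}^{c}\omega-\tfrac12\,\psi^{2}(\omega)=\eta
\]
degree by degree (the degree-$p$ equation reads $(1-2^{p})\omega^{(p)}=\eta^{(p)}-(\text{lower-degree data})$, which is solvable for $p\geq1$).  This produces an explicit witness that $\eta\in F^{2}$.  Your approach is \emph{inductive and combinatorial}: you show directly that the whole real vector space $\widetilde{A}^{p,p}(X)$ sits inside $F^{2}$ by downward induction on $p$, exploiting the identity $\sum_{k=2}^{p+1}a_{k}=-1$ for the finite differences of $n\mapsto n^{p}$ to extract $\eta$ as an integer combination of the $\gamma^{k}(\eta)$ modulo higher-degree forms.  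The paper's proof is shorter and gives more: it exhibits $\eta$ as a \emph{single} $\gamma^{2}$, not merely as a $\Z$-combination of $\gamma^{k}$'s.  Your proof, on the other hand, yields as a byproduct the stronger statement that each graded piece $\widetilde{A}^{p,p}(X)$ (for $p\geq1$) lies in $F^{2}$, and its mechanism would adapt to showing $\widetilde{A}^{p,p}(X)\subseteq F^{p+1}$, which is exactly the content of the conjecture recorded in Remark~\ref{vanish2}.

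Two small remarks: the coefficients $c_{k,j}$ are in fact rational (they arise from Newton's identities applied to $\psi^{j}(\eta)=j^{\,p+1}\eta$), though your argument only needs that $c_{k,j}\eta^{j}\in\widetilde{A}^{p',p'}(X)$; and your closing reference to Lemma~\ref{vanish} is not actually used anywhere in the inductive step, so it can be dropped.
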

\begin{proof}
We claim that there exists an element $\omega\in \widetilde{A}(X)$ such that $\gamma^2(\omega)=\eta$. Indeed, we need to solve the equation $\omega+\lambda^2(\omega)=\eta$ which is equivalent to $\omega+\frac{1}{2}\omega\wedge {\rm dd}^c\omega-\frac{1}{2}\psi^2(\omega)=\eta$ by the Newton formula. But taking $\omega^{(0)}$ to be any real valued smooth function (eg. the constant function $0$) the above equation can be certainly solved by solving $\omega^{(i)}$ $(i\geq1)$ one by one. This process will terminate after finitely many steps because the dimension of $X(\C)$ is finite.
\end{proof}

\begin{proof}(of Theorem~\ref{gamma})
We firstly prove that for any $x\in F^2\widehat{K_0}(X)$ we have ${\rm det}(x)=1$, then the morphism ${\rm det}: F^1\widehat{K_0}(X)/{F^2\widehat{K_0}(X)}\to {\rm Pic}(X)$ is well-defined. If $x$ is the product of two generators of ${\rm ker}(\varepsilon)$, i.e. if $x=\gamma^1(\overline{E}_1-\overline{F}_1+\eta_1)\gamma^1(\overline{E}_2-\overline{F}_2+\eta_2)$ where $\overline{E}_i, \overline{F}_i$ are hermitian vector bundles on $X$ such that ${\rm rk}(E_i)={\rm rk}(F_i)$ and $\eta_i\in \widetilde{A}(X)$ ($i=1,2$), then it is readily checked that $\widehat{{\rm ch}}(x)=(0,1)$ and hence ${\rm det}(x)=1$ in $\widehat{{\rm Pic}}(X)$. Notice that $\widehat{{\rm ch}}$ is a ring homomorphism, the $\gamma^k$ define a pre-$\lambda$-ring structure on $\widehat{K_0}(X)$ and the function part of $\gamma^k(\eta)$ vanishes (by Lemma~\ref{vanish}), we may reduce our proof to the case where $x=\gamma^i(\overline{E}-\overline{F})$ with ${\rm rk}(E)={\rm rk}(F)$ and $i\geq2$. By the splitting principle (cf. \cite[Theorem 4.1]{Roe1}), we may assume that ${\rm rk}(E)={\rm rk}(F)=1$, then $x=\gamma^i((\overline{E}-1)+(1-\overline{F}))$ and we may furthermore reduce the proof to the case $x=\gamma^i(1-\overline{F})$ with $i\geq2$ since $\gamma^i(\overline{E}-1)=0$ for $i>1$. In this case, the statement that ${\rm det}(x)=1$ is correct because $\gamma^i(1-\overline{F})=(1-\overline{F})^i$ for $i\geq0$.

Now, let $g$ be a map from $\widehat{{\rm Pic}}(X)$ to ${\rm Gr}^1(\widehat{K_0}(X))$ which sends $\overline{L}$ to $\overline{L}-1$ mod $F^2\widehat{K_0}(X)$. This is a group homomorphism because
\begin{displaymath}
(\overline{L}\overline{L}'-1)-(\overline{L}-1)-(\overline{L}'-1)=(\overline{L}-1)(\overline{L}'-1)
\end{displaymath}
which is an element in $F^2\widehat{K_0}(X)$. Moreover, since ${\rm det}(\overline{L}-1)={\rm det}(\overline{L})=\overline{L}$, we have ${\rm det}\circ g={\rm Id}$. This implies that ${\rm det}: {\rm Gr}^1(\widehat{K_0}(X))\to\widehat{{\rm Pic}}(X)$ is surjective.

Finally, we prove that ${\rm det}$ is injective. Let $x\in F^1\widehat{K_0}(X)$, if $x$ is represented by a smooth form $\eta$, then by \cite[Prop. 1.2.5]{GS2} we have $g\circ {\rm det}(\eta)=\eta^{(0)}$ mod $F^2\widehat{K_0}(X)$. But $\eta^{(0)}=\eta$ mod $F^2\widehat{K_0}(X)$ by Lemma~\ref{vanish3}, so $g\circ {\rm det}(\eta)={\rm Id}$ for elements in $\widetilde{A}(X)$. Next, we assume that $x=\overline{E}-\overline{F}$ such that ${\rm rk}(E)={\rm rk}(F)$. By the splitting principle, we can write $x=\sum n_i\overline{L_i}=\sum n_i(\overline{L_i}-1)$ where $n_i\in \Z, \sum n_i=0$ and $\overline{L_i}$ are hermitian line bundles on $X$. Then ${\rm det}(x)=\prod \overline{L_i}^{n_i}$ and $g\circ {\rm det}(x)=x$ mod $F^2\widehat{K_0}(X)$.
\end{proof}

\subsection{The class of $\Delta(\overline{L})$ in $\widehat{{\rm Pic}}(\widetilde{H}_{g,d,1})$}
In this subsection, we shall
complete the proof of our main theorem, Theorem~\ref{103}.
Let $\pi: \widetilde{Z}_{g,d,1}\to \widetilde{H}_{g,d,1}$ be
the universal abelian scheme of the moduli functor
$\widetilde{\mathcal{H}}_{g,d,1}$ with the universal rigidified
relatively ample line bundle $L$. As an arithmetic extension of Section 3., it is clear that we only need to
show that there exist some $\PGL_N$-invariant hermitian metric on $L$ and some $\PGL_N$-invariant K\"{a}hler fibration structure on $\pi_\C$ such that
$\Delta(\overline{L})$ has a torsion class in the arithmetic Picard group $\widehat{{\rm Pic}}(\widetilde{H}_{g,d,1})$ and the bound of its order can be chosen to be independent of $L$.

Let $e: \widetilde{H}_{g,d,1}\to \widetilde{Z}_{g,d,1}$ be the
unit section and let $\eta: e^*L\cong
\mathcal{O}_{\widetilde{H}_{g,d,1}}$ be the universal
rigidification of $L$. Denote by $\gamma: [-1]^*L\cong L$ the
isomorphism which is compatible with the rigidification. In
\cite[Prop. 2.1, p. 48]{MB}, Moret-Bailly shows that any line bundle on an abelian
variety $A$ over $\C$ admits a unique hermitian metric
such that its curvature form is translation invariant and such that it is compatible with the rigidification.
Moret-Bailly's proof relies on the cubical structure on $L$, which is an isomorphism
\begin{displaymath}
\alpha: \mathcal{D}_3(L):=\bigotimes_{\emptyset\neq I\subset\{1,2,3\}}(m_I^*L)^{\otimes(-1)^{{\rm Card}(I)}}\longrightarrow \mathcal{O}_{A\times A\times A}
\end{displaymath}
satisfying some symmetry and cocycle conditions (cf. \cite[Definition 2.4.5, p. 19]{MB}),
here $m_I: A\times A\times A$ is the morphism $(x_1,x_2,x_3)\mapsto \Sigma_{i\in I}x_i$.
Endowing $\mathcal{O}_{A\times A\times A}$ with the trivial metric and endowing $\mathcal{D}_3(L)$
with the hermitian metric such that $\alpha$ is an isometry. Moret-Bailly actually proves that
the metric on $\mathcal{D}_3(L)$ obtained in this way arises from a unique hermitian metric on $L$.
This argument is also valid for the relative setting. That means, to $(\widetilde{Z}_{g,d,1}/{\widetilde{H}_{g,d,1}}, L)$, there exists a unique metric on $L_\C$ such that the first Chern form ${\rm c}_1(\overline{L})$ is
translation invariant on the fibres and such that $\eta$ is an isometry. We
endow $L_\C$ with this metric. By unicity, this metric is $\PGL_N$-invariant, because the $\PGL_N$-action only changes the linear rigidification, it doesn't affect the structures of projective abelian scheme and of the rigidified relatively ample line bundle. Also by the unicity of this metric, $\gamma: [-1]^*\overline{L}\cong\overline{L}$ is an isometry.
Moreover, by construction, this metric is compatible with the theorem of the cube, so
we have isometry $[k]^*\overline{L}\cong\overline{L}^{k^2}$ for
any $k\geq1$. Next, notice that the real $(1,1)$-form ${\rm c}_1(\overline{L})$
is positive on every fibre because $L$ is a relatively ample line bundle. Then
${\rm c}_1(\overline{L})$ defines a hermitian metric on the relative tangent bundle
$T\pi$, and hence a K\"{a}hler fibration structure on $\pi_\C$ (cf. \cite[Theorem 1.2]{BK}). This K\"{a}hler fibration structure is $\PGL_N$-invariant because the metric on $L$ is so. We endow $\Omega_\pi$ and $\omega_\pi$ with the metrics induced by the metric on $T\pi$. Finally, since ${\rm c}_1(\overline{L})$ is translation invariant on the fibres, we have a canonical isometry $\pi^*e^*\overline{\Omega}_\pi\cong\overline{\Omega}_\pi$.

We shall use the arithmetic
Adams-Riemann-Roch theorem to prove that there exist a positive integer $m$ and an isometry
$\Delta(\overline{L})^m\cong
\overline{\mathcal{O}}_{\widetilde{H}_{g,d,1}}$. We compute in
$\widehat{K_0}(\widetilde{H}_{g,d,1})[1/k]$:
\begin{align*}
\psi^{k^2}(\pi_*\overline{L})&=\pi_*(\theta_A^{k^2}(\overline{\Omega}_\pi)^{-1}\cdot
\psi^{k^2}(\overline{L})) \\
&=\pi_*(\theta_A^{k^2}(\overline{\Omega}_\pi)^{-1}\overline{L}^{k^2}) \\
&=\pi_*(\theta_A^{k^2}(\overline{\Omega}_\pi)^{-1}[k]^*\overline{L}) \\
&=\pi_*([k]^*\overline{L})\theta_A^{k^2}(e^*\overline{\Omega}_\pi)^{-1}.
\end{align*}

In other words, we have the identity
\begin{displaymath}
\theta_A^{k^2}(e^*\overline{\Omega}_\pi)\psi^{k^2}(\pi_*\overline{L})=\pi_*([k]^*\overline{L})
\end{displaymath}
which holds in $\widehat{K_0}(\widetilde{H}_{g,d,1})[1/k]$. We now apply the
arithmetic Chern character $\widehat{{\rm ch}}$ to the above identity.

To simplify the expression, we shall replace the multiplicative
notation $\otimes$ by the additive notation $+$ in the group
$\widehat{{\rm Pic}}$. Moreover, by the splitting principle we may suppose that
$e^*\overline{\Omega}_\pi=\overline{Q}_1+\cdots+\overline{Q}_g$ in
$\widehat{K_0}(\widetilde{H}_{g,d,1})$, where
$\overline{Q}_1,\ldots,\overline{Q}_g$ are hermitian line bundles.
So we have
\begin{align*}
\widehat{{\rm
ch}}(\theta_A^{k^2}(e^*\overline{\Omega}_\pi))&=\widehat{{\rm
ch}}(\theta^{k^2}(e^*\overline{\Omega}_\pi))\\
&=(k^2+\frac{k^2(k^2-1)}{2}{\rm
det}(\overline{Q}_1))\bullet\cdots\bullet(k^2+\frac{k^2(k^2-1)}{2}{\rm
det}(\overline{Q}_g))\\
&=k^{2g}+\frac{k^2(k^2-1)k^{2g-2}}{2}{\rm
det}(e^*\overline{\Omega}_\pi).
\end{align*}
and
\begin{align*}
\widehat{{\rm
ch}}(\theta_A^{k^2}(e^*\overline{\Omega}_\pi))\bullet\widehat{{\rm
ch}}(\psi^{k^2}(\pi_*\overline{L}))&=(k^{2g}+\frac{k^2(k^2-1)k^{2g-2}}{2}{\rm
det}(e^*\overline{\Omega}_\pi))\bullet(d+k^2{\rm
det}_Q(\pi_*\overline{L}))\\
&=k^{2g}d+k^{2g+2}{\rm
det}_Q(\pi_*\overline{L})+\frac{dk^2(k^2-1)k^{2g-2}}{2}{\rm
det}(e^*\overline{\Omega}_\pi).
\end{align*}

On the other hand, we have
\begin{displaymath}
\widehat{{\rm ch}}(\pi_*([k]^*\overline{L})=dk^{2g}+{\rm
det}_Q(\pi_*([k]^*\overline{L})).
\end{displaymath}
This follows from the fact
that the degree of the isogeny $[k]$ on $\widetilde{Z}_{g,d,1}$ is
$k^{2g}$ and the fact that the rank of $\pi_*([k]^*L)$ is
$dk^{2g}$ (cf. \cite[Theorem 2, p. 121]{Mum}). Finally,
multiplying by $k^{-2g}$ and specializing to $\widehat{{\rm
Pic}}(\widetilde{H}_{g,d,1})[1/k]$, we get an identity
\begin{displaymath}
k^2{\rm det}_Q(\pi_*\overline{L})+\frac{d(k^2-1)}{2}{\rm
det}(e^*\overline{\Omega}_\pi)=k^{-2g}{\rm
det}_Q(\pi_*([k]^*\overline{L}))
\end{displaymath}
in $\widehat{{\rm
Pic}}(\widetilde{H}_{g,d,1})[1/k]$.

We now compare ${\rm det}_Q(\pi_*\overline{L})$ with ${\rm
det}_Q(\pi_*([k]^*\overline{L}))$, we need the following lemmas.

\begin{lem}\label{1}
Let $\overline{\varepsilon}$ be the exact sequence $0\to 0\to \overline{T\pi}\to [k]^*\overline{T\pi}\to 0$ of hermitian vector bundles on $\widetilde{Z}_{g,d,1}(\C)$, and let $\eta$ be the smooth form $\widetilde{{\rm Td}}(\overline{\varepsilon}){\rm Td}^{-1}(\overline{T\pi})$. Then for any positive integer $l$, the identity $\psi^l([k]_*\overline{O}_{\widetilde{Z}_{g,d,1}}-[k]_*\eta)=[k]_*\overline{O}_{\widetilde{Z}_{g,d,1}}-[k]_*\eta$ holds in
$\widehat{K_0}(\widetilde{Z}_{g,d,1})[1/l]$.
\end{lem}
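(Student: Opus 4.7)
The plan is to apply the arithmetic Adams--Riemann--Roch theorem (Theorem~\ref{408}) to the morphism $[k]:\widetilde{Z}_{g,d,1}\to \widetilde{Z}_{g,d,1}$, which is finite flat (hence projective) and \'etale (hence smooth) over $\Q$. Over $\C$, $[k]_\C$ is an \'etale cover, so the relative tangent bundle $T_{[k],\C}$ vanishes identically and the $R$-genus $R(T_{[k],\C})$ is zero; consequently the arithmetic Bott class reduces to the classical Bott class $\theta_A^l(\overline{T_{[k]}}^\vee)^{-1}=\theta^l(\overline{T_{[k]}}^\vee)^{-1}$. Applying Theorem~\ref{408} to $x=\overline{O}_{\widetilde{Z}_{g,d,1}}$, for which $\psi^l(\overline{O})=\overline{O}$, one obtains
\begin{displaymath}
\psi^l\bigl([k]_*\overline{O}_{\widetilde{Z}_{g,d,1}}\bigr)=[k]_*\bigl(\theta^l(\overline{T_{[k]}}^\vee)^{-1}\bigr).
\end{displaymath}

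To compute $\theta^l(\overline{T_{[k]}}^\vee)^{-1}$ explicitly I would use the factorization $[k]=p_2\circ \Gamma_{[k]}$, where $\Gamma_{[k]}$ is the graph of $[k]$ inside $\widetilde{Z}_{g,d,1}\times_{\widetilde{H}_{g,d,1}}\widetilde{Z}_{g,d,1}$ (a regular closed immersion, since the universal abelian scheme is separated over its base) and $p_2$ is the second projection (smooth, being a base change of $\pi$). A direct tangent computation identifies the normal bundle of $\Gamma_{[k]}$ with $[k]^*T\pi$, the pullback $\Gamma_{[k]}^*Tp_2$ with $T\pi$, and the short exact sequence $\overline{\mathcal{N}}$ appearing in Definition~\ref{bott} with the sequence $\overline{\varepsilon}$ of the statement (the vanishing of $T_{[k],\C}$ is crucial here).

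The core of the proof is then to establish the identity
\begin{displaymath}
\theta^l(\overline{T_{[k]}}^\vee)^{-1}-1\;=\;\psi^l(\eta)-\eta
\end{displaymath}
in $\widehat{K_0}(\widetilde{Z}_{g,d,1})[1/l]$. Algebraically, both sides are purely smooth-form classes, since $[k]_\Q$ is \'etale (forcing the algebraic class of $\theta^l(T_{[k]}^\vee)^{-1}$ to equal $1$) and $\eta$ is a smooth form. For the secondary part one uses the universal formula $\mathrm{ch}(\theta^l(\overline V))=l^{\mathrm{rk}(V)}\mathrm{Td}^{-1}(\overline V)\phi^l(\mathrm{Td}(\overline V))$ together with its Bott--Chern analogue applied to $\overline{\varepsilon}$, the identity $\psi^l|_{\widetilde A(X)}=l\phi^l$, and the definition $\eta=\widetilde{\mathrm{Td}}(\overline{\varepsilon})\,\mathrm{Td}^{-1}(\overline{T\pi})$.

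Granted this identity, pushing forward by $[k]$ and noting that $[k]_*$ commutes with $\psi^l$ on smooth-form classes (integration over the $0$-dimensional fibres of $[k]_\C$ preserves bidegree, hence commutes with $\phi^l$), one obtains
\begin{displaymath}
\psi^l([k]_*\overline{O})-[k]_*\overline{O}\;=\;[k]_*\bigl(\psi^l(\eta)-\eta\bigr)\;=\;\psi^l([k]_*\eta)-[k]_*\eta,
\end{displaymath}
which rearranges to the claim of the lemma. The main obstacle is the identity displayed above: matching the Bott--Chern secondary data extracted from Definition~\ref{bott} with $\psi^l(\eta)-\eta$ demands careful bookkeeping of the multiplicativity of secondary classes under tensor operations and of the action of $\phi^l$ on the homogeneous components of $\widetilde{\mathrm{Td}}(\overline{\varepsilon})$.
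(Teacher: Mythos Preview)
Your proposal is correct and follows essentially the same route as the paper: apply the arithmetic Adams--Riemann--Roch theorem to the \'etale-over-$\Q$ isogeny $[k]$, reduce $\theta_A^l$ to $\theta^l$, identify the factorization data in Definition~\ref{bott} with $(\overline{T\pi},[k]^*\overline{T\pi},\overline{\varepsilon})$, and then show $\theta^l(\overline{T[k]}^\vee)^{-1}-1=\psi^l(\eta)-\eta$. The paper carries out your ``main obstacle'' computation explicitly, using a $\theta^l(\cdot)^{-1}$-multiplicativity formula (analogous to \cite[Prop.~1(ii)]{GS5}) together with the isometry $\pi^*e^*\overline{\Omega}_\pi\cong\overline{\Omega}_\pi$ and $\pi\circ[k]=\pi$ to collapse $\theta^l([k]^*\overline{T\pi}^\vee)\,\theta^l(\overline{T\pi}^\vee)^{-1}$ to $1$, and then the identities ${\rm ch}(\theta^l(\overline{T\pi}^\vee))=l^g{\rm Td}(\overline{T\pi})\phi^l({\rm Td}^{-1}(\overline{T\pi}))$ and the explicit expression for $\widetilde{\theta}^l(\overline{\varepsilon})$ from \cite[Lem.~6.11, Prop.~7.3]{Roe} to identify $\theta^l(\overline{T\pi}^\vee)\cdot\widetilde{\theta}^l(\overline{\varepsilon})$ with $\eta-\psi^l(\eta)$; for the final commutation $\psi^l([k]_*\eta)=[k]_*\psi^l(\eta)$ the paper notes ${\rm dd}^c\eta=0$ (equivalently, your bidegree-preservation argument).
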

\begin{proof}
We apply the arithmetic Adams-Riemann-Roch theorem to the isogeny $[k]$ which is \'{e}tale over $\C$. The point is to compute $\theta_A^l(\overline{T[k]}^\vee)^{-1}$ which is $\theta^l(\overline{T[k]}^\vee)^{-1}$ since the relative tangent bundle of $[k]_\C$ vanishes. This computation can be done by using a $\theta^l(\cdot)^{-1}$-version of \cite[Prop. 1. (ii), p. 504]{GS5}, an essentially same reasoning shows that
\begin{displaymath}
\theta^l(\overline{T[k]}^\vee)^{-1}=\theta^l(\overline{T\pi}^\vee)^{-1}\cdot[k]^*\theta^l(\overline{T\pi}^\vee)-[k]^*\theta^l(\overline{T\pi}^\vee)\cdot \widetilde{\theta}^l(\overline{\varepsilon}).
\end{displaymath}
Following from the fact that $\pi^*e^*\overline{\Omega}_\pi\cong\overline{\Omega}_\pi$ and that $\pi\circ [k]=\pi$ we obtain
\begin{displaymath}
\theta^l(\overline{T[k]}^\vee)^{-1}=1-\theta^l(\overline{T\pi}^\vee)\cdot \widetilde{\theta}^l(\overline{\varepsilon}).
\end{displaymath}
Next, by \cite[Lem. 6.11, Prop. 7.3]{Roe}, we have
\begin{displaymath}
{\rm ch}(\theta^l(\overline{T\pi}^\vee))=l^g{\rm Td}(\overline{T\pi})\phi^l({\rm Td}^{-1}(\overline{T\pi}))
\end{displaymath}
and
\begin{align*}
\widetilde{\theta}^l(\overline{\varepsilon})&=[l^{-g}{\rm Td}^{-1}(\overline{T\pi})\phi^l({\rm Td}(\overline{T\pi}))l^g\widetilde{{\rm Td}}(\overline{\varepsilon})-l\phi^l(\widetilde{{\rm Td}}(\overline{\varepsilon}))]l^{-g}{\rm Td}^{-1}(\overline{T\pi})\\
&=l^{-g}{\rm Td}^{-2}(\overline{T\pi})\phi^l({\rm Td}(\overline{T\pi}))\widetilde{{\rm Td}}(\overline{\varepsilon})-l^{1-g}{\rm Td}^{-1}(\overline{T\pi})\phi^l(\widetilde{{\rm Td}}(\overline{\varepsilon}))
\end{align*}
So we finally have $\theta^l(\overline{T\pi}^\vee)\cdot \widetilde{\theta}^l(\overline{\varepsilon})={\rm ch}(\theta^l(\overline{T\pi}^\vee))\widetilde{\theta}^l(\overline{\varepsilon})$ which is nothing but $\eta-\psi^l(\eta)$.
This implies that $\psi^l([k]_*\overline{O}_{\widetilde{Z}_{g,d,1}})=[k]_*\overline{O}_{\widetilde{Z}_{g,d,1}}-[k]_*(\eta-\psi^l(\eta))$ which holds in $\widehat{K_0}(\widetilde{Z}_{g,d,1})[1/l]$.
Notice that ${\rm dd}^c\widetilde{{\rm Td}}(\overline{\varepsilon})=0$ and hence ${\rm dd}^c\eta=0$, then $\psi^l([k]_*\eta)=[k]_*(\psi^l(\eta))$. This also can be seen from the fact that the $i$-th component of $[k]_*\eta$ is $[k]_*(\eta^{(i)})$. Removing $[k]_*(\psi^l(\eta))$ to the left-hand side, we are done.
\end{proof}

\begin{lem}\label{2}
Let $\eta$ be the smooth form defined in Lemma~\ref{1}. The element $[k]_*\overline{O}_{\widetilde{Z}_{g,d,1}}$ is equal to $k^{2g}+[k]_*\eta$ in $\widehat{K_0}(\widetilde{Z}_{g,d,1})_\Q$.
\end{lem}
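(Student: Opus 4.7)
The plan is to use Lemma~\ref{1} together with the eigenspace decomposition provided by Proposition~\ref{gamma3} applied to the locally nilpotent $\gamma$-filtration on $\widehat{K_0}(\widetilde{Z}_{g,d,1})$. Set
\begin{displaymath}
y := [k]_*\overline{O}_{\widetilde{Z}_{g,d,1}}-[k]_*\eta - k^{2g} \in \widehat{K_0}(\widetilde{Z}_{g,d,1})_\Q.
\end{displaymath}
The goal is to show that $y=0$. The key observation is that by Lemma~\ref{1} the element $[k]_*\overline{O}_{\widetilde{Z}_{g,d,1}}-[k]_*\eta$ is invariant under every Adams operation $\psi^l$, and obviously $\psi^l(k^{2g})=k^{2g}$, so $\psi^l(y)=y$ for all $l\geq 1$.

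First I would check that $y$ lies in $F^1\widehat{K_0}(\widetilde{Z}_{g,d,1})_\Q$. Since $[k]$ is finite and flat of degree $k^{2g}$, the push-forward $[k]_*\overline{O}_{\widetilde{Z}_{g,d,1}}$ has rank $k^{2g}$; on the other hand $[k]_*\eta$ is represented by a smooth form and hence has rank $0$ under the augmentation $\varepsilon$. Thus $\varepsilon(y)=k^{2g}-0-k^{2g}=0$, so $y\in F^1\widehat{K_0}(\widetilde{Z}_{g,d,1})_\Q$.

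Next, I would invoke Proposition~\ref{gamma3}, which gives (using local nilpotency of the $\gamma$-filtration, proved by Roessler and recalled just before Proposition~\ref{gamma3}) the decomposition
\begin{displaymath}
F^1\widehat{K_0}(\widetilde{Z}_{g,d,1})_\Q = \bigoplus_{i\geq 1} V_i,
\end{displaymath}
where $V_i$ is the $l^i$-eigenspace of $\psi^l$ for any fixed $l>1$, and $V_i$ is independent of $l$. Write $y=\sum_{i\geq 1} y_i$ with $y_i\in V_i$; by Corollary~\ref{nilpotent2} this sum is finite. Applying $\psi^2$ gives $\sum_{i\geq 1} 2^i y_i = \psi^2(y)=y=\sum_{i\geq 1} y_i$, hence $(2^i-1)y_i = 0$ for each $i\geq 1$, which forces $y_i=0$ for every $i\geq 1$. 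Therefore $y=0$, which is the desired equality.

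The only subtle point is making sure that Lemma~\ref{1} really delivers $\psi^l$-invariance of $[k]_*\overline{O}-[k]_*\eta$ for all $l$ (not only after inverting one particular $l$); but this is fine because Lemma~\ref{1} is stated for arbitrary $l\geq 1$, and the equality we ultimately want lives in $\widehat{K_0}(\widetilde{Z}_{g,d,1})_\Q$, where inverting $l$ is harmless. The main conceptual obstacle is simply to recognize that invariance under all Adams operations, combined with vanishing of the augmentation, forces an element of the rational arithmetic $K_0$ to be zero; this is precisely what the eigenspace description in Proposition~\ref{gamma3} provides.
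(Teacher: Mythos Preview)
Your proof is correct and follows essentially the same route as the paper: define the difference element, observe it lies in $F^1$ because the isogeny $[k]$ has degree $k^{2g}$, use Lemma~\ref{1} to get Adams-invariance, and then exploit the $\gamma$-filtration over $\Q$ to force vanishing. The only cosmetic difference is that the paper iterates Proposition~\ref{gamma2} (obtaining $\prod_{i=1}^n(1-l^i)\cdot x\in F^{n+1}$ and then invoking Corollary~\ref{nilpotent2}), whereas you invoke the eigenspace decomposition of Proposition~\ref{gamma3} in one stroke; these are two packagings of the same idea.
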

\begin{proof}
Write $x:=[k]_*\overline{O}_{\widetilde{Z}_{g,d,1}}-k^{2g}-[k]_*\eta$, then $x\in F^1\widehat{K_0}(\widetilde{Z}_{g,d,1})$. Take any integer $l>1$, we have $\psi^l(x)-lx\in F^2$ by Proposition~\ref{gamma2}. Then $(1-l)x\in F^2\widehat{K_0}(\widetilde{Z}_{g,d,1})[1/l]$ by Lemma~\ref{1}. Repeating such approach by using Proposition~\ref{gamma2} and Lemma~\ref{1}, for any positive integer $n>0$, we get a polynomial \begin{displaymath}
P_n(X):=\prod_{i=1}^n(1-X^i)
\end{displaymath}
such that $P_n(l)\neq 0$ and $P(l)x\in F^{n+1}\widehat{K_0}(\widetilde{Z}_{g,d,1})[1/l]$. Taking $n$ to be sufficiently large, we deduce our lemma from Corollary~\ref{nilpotent2}.
\end{proof}

\begin{lem}\label{3}
For any element $x\in \widehat{K_0}(\widetilde{Z}_{g,d,1})$, we have $\pi_*(x)-\pi_*[k]_*(x)=-\pi_*(x\cdot\eta)$ in
$\widehat{K_0}(\widetilde{H}_{g,d,1})$.
\end{lem}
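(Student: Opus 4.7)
The plan is to verify the identity by additivity, checking it on the generators of $\widehat{K}_0(\widetilde{Z}_{g,d,1})$, namely smooth forms $\alpha\in\widetilde{A}(\widetilde{Z}_{g,d,1})$ and hermitian vector bundles $\overline{E}$; both sides are $\Z$-linear in $x$.

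For a smooth form $x=\alpha$, the verification is a direct computation. By Definition~\ref{405}, $\pi_*(\alpha)=\int_{X(\C)/Y(\C)}{\rm Td}(\overline{T\pi})\,\alpha$, and since $[k]_\C$ is \'etale, ${\rm Td}(\overline{T[k]})=1$, so $[k]_*$ is the fiber-sum operator. Applying the projection formula for $[k]$ on forms, together with $[k]^*{\rm Td}(\overline{T\pi})={\rm Td}([k]^*\overline{T\pi})$, I obtain $\pi_*([k]_*\alpha)=\int_{X/Y}{\rm Td}([k]^*\overline{T\pi})\,\alpha$. Hence
\begin{displaymath}
\pi_*(\alpha)-\pi_*([k]_*\alpha)=\int_{X/Y}\big({\rm Td}(\overline{T\pi})-{\rm Td}([k]^*\overline{T\pi})\big)\,\alpha=-\int_{X/Y}dd^c\widetilde{{\rm Td}}(\overline{\varepsilon})\wedge\alpha,
\end{displaymath}
which after integration by parts modulo $\mathrm{Im}\,\partial+\mathrm{Im}\,\overline{\partial}$ becomes $-\int_{X/Y}\widetilde{{\rm Td}}(\overline{\varepsilon})\wedge dd^c\alpha$. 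By Definition~\ref{404} applied to two pure-form generators, $\alpha\cdot\eta=dd^c\alpha\wedge\eta$ in $\widehat{K}_0$, so the above equals $-\pi_*(\alpha\cdot\eta)$ exactly.

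For a hermitian vector bundle $x=\overline{E}$, the identity encodes an anomaly formula. The crucial point is that, since $[k]^*\overline{L}\cong\overline{L}^{k^2}$ as hermitian bundles, we have $[k]^*\omega=k^2\omega$, so the K\"ahler metric induced on $T\pi$ via pullback through $[k]$ differs from the one induced by $\omega$ by the factor $k^2$; this metric change is recorded by the exact sequence $\overline{\varepsilon}$ with secondary Todd class $\widetilde{{\rm Td}}(\overline{\varepsilon})$. By the Bismut-Ma composition formula applied to the \'etale morphism $[k]$ (whose analytic torsion vanishes since its fibers are discrete) followed by $\pi$, the pushforward $\pi_*([k]_*\overline{E})$ coincides with $(\pi\circ[k])_*(\overline{E})$ computed with the pulled-back K\"ahler form $[k]^*\omega=k^2\omega$. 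The Bismut-Gillet-Soul\'e anomaly formula for the change of K\"ahler form from $\omega$ to $k^2\omega$ then yields
\begin{displaymath}
\pi_*(\overline{E})-(\pi\circ[k])_*^{k^2\omega}(\overline{E})=-\pi_*\big(\overline{E}\cdot\widetilde{{\rm Td}}(\overline{\varepsilon})\cdot{\rm Td}^{-1}(\overline{T\pi})\big)=-\pi_*(\overline{E}\cdot\eta),
\end{displaymath}
as desired.

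The hard part will be invoking the composition and anomaly formulas cleanly and identifying the resulting Bott-Chern secondary characteristic classes with the specific form $\eta$ defined in Lemma~\ref{1} (matching signs, orientations, and factors). A more hands-on alternative would exploit the canonical isomorphism $\pi_*[k]_*E\cong\pi_*E$ (by Leray, since $[k]$ is \'etale), observe that the $L^2$ metric on $\pi_*[k]_*E$ is $k^{2g}$ times that on $\pi_*E$ (via the change-of-variables $[k]^*\omega^g=k^{2g}\omega^g$ on the abelian fibers of $\pi$), and combine this metric comparison with the Bismut-K\"ohler anomaly formula relating $T(\omega,h^{[k]_*E})$ to $T(\omega,h^E)$ in order to conclude by a direct calculation; either route ultimately requires careful bookkeeping of the secondary classes.
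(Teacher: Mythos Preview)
Your approach is essentially the same as the paper's: reduce to generators by linearity, handle smooth forms directly, and invoke Ma's composition formula for analytic torsion forms in the hermitian vector bundle case. The paper is simply more terse, noting that the form case is ``clearly true'' (indeed, since $dd^c\widetilde{\rm Td}(\overline{\varepsilon})=0$ here, both sides actually vanish for $x=\alpha$, which would have shortened your computation) and citing \cite[(0.5), p.~543]{Ma} directly for $x=\overline{E}$ rather than decomposing it into a composition step plus an anomaly step as you do; your two-step unpacking is a valid reformulation of what Ma's formula delivers in this situation.
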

\begin{proof}
If $x$ is represented by a smooth form, then the statement is clearly true. So we may suppose that $x$ is represented by a $\pi$-acyclic hermitian vector bundle $\overline{E}$. In this case, notice that $[k]$ is a finite morphism, so $\overline{E}$ is $[k]$-acyclic. Then the desired identity for $x=\overline{E}$ follows from \cite[(0.5), p. 543]{Ma}.
\end{proof}

\begin{prop}\label{compare}
The identity ${\rm det}_Q(\pi_*([k]^*\overline{L}))=k^{2g}{\rm
det}_Q(\pi_*\overline{L})$ holds in $\widehat{{\rm
Pic}}(\widetilde{H}_{g,d,1})_\Q$.
\end{prop}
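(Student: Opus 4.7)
The plan is to prove the stronger identity
\[
\pi_*([k]^*\overline L) \;=\; k^{2g}\pi_*\overline L \quad\text{in } \widehat{K_0}(\widetilde{H}_{g,d,1})_\Q,
\]
from which the proposition follows by applying the group homomorphism ${\rm det}_Q$.

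The intermediate identity is assembled from three tools. First, the projection formula in $\widehat{K_0}$ gives $[k]_*[k]^*\overline L = \overline L\cdot[k]_*\overline{\mathcal O}_{\widetilde{Z}_{g,d,1}}$; second, Lemma~\ref{2} expands $[k]_*\overline{\mathcal O}_{\widetilde{Z}_{g,d,1}} = k^{2g}+[k]_*\eta$ in $\widehat{K_0}(\widetilde{Z}_{g,d,1})_\Q$; third, the projection formula again converts $\overline L\cdot[k]_*\eta$ into $[k]_*([k]^*\overline L\cdot\eta)$. Pushing forward by $\pi$ yields
\[
\pi_*[k]_*[k]^*\overline L \;=\; k^{2g}\pi_*\overline L \;+\; \pi_*[k]_*([k]^*\overline L\cdot\eta).
\]
On the other hand, Lemma~\ref{3} applied first to $x=[k]^*\overline L$ and then to $x=[k]^*\overline L\cdot\eta$ produces
\[
\pi_*[k]_*[k]^*\overline L = \pi_*([k]^*\overline L) + \pi_*([k]^*\overline L\cdot\eta),
\]
\[
\pi_*[k]_*([k]^*\overline L\cdot\eta) = \pi_*([k]^*\overline L\cdot\eta) + \pi_*([k]^*\overline L\cdot\eta^2).
\]
Substituting the latter into the former and cancelling the two copies of $\pi_*([k]^*\overline L\cdot\eta)$, I arrive at
\[
\pi_*([k]^*\overline L) \;=\; k^{2g}\pi_*\overline L \;+\; \pi_*([k]^*\overline L\cdot\eta^2)
\]
in $\widehat{K_0}(\widetilde{H}_{g,d,1})_\Q$.

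It then remains to show the residual term $\pi_*([k]^*\overline L\cdot\eta^2)$ vanishes. By the $\widehat{K_0}$-product rule $\omega\cdot\omega = ({\rm dd}^c\omega)\wedge\omega$ for smooth forms (read off from the definition of $*$ on $\Gamma(X)$), one has $\eta^2 = ({\rm dd}^c\eta)\wedge\eta$, so it suffices to verify ${\rm dd}^c\eta = 0$. Since $\eta = \widetilde{{\rm Td}}(\overline\varepsilon)\cdot{\rm Td}^{-1}(\overline{T\pi})$ and ${\rm Td}^{-1}(\overline{T\pi})$ is a closed Chern-Weil form, the Leibniz rule combined with the Bott-Chern equation ${\rm dd}^c\widetilde{{\rm Td}}(\overline\varepsilon) = {\rm Td}([k]^*\overline{T\pi})-{\rm Td}(\overline{T\pi})$ reduces the claim to the equality ${\rm Td}([k]^*\overline{T\pi})={\rm Td}(\overline{T\pi})$ of smooth forms. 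This follows from the fact that $[k]$ multiplies the translation-invariant first Chern form ${\rm c}_1(\overline L)$ by the constant $k^2$ on every fibre of $\pi$, so the induced pullback metric on $T\pi$ differs from the original by the scalar $k^2$; since Chern-Weil forms are invariant under scalar rescalings of the hermitian metric, the two Todd forms agree. Consequently $\eta^2 = 0$ in $\widehat{K_0}(\widetilde{Z}_{g,d,1})$, whence $\pi_*([k]^*\overline L\cdot\eta^2) = 0$ and the proposition follows upon applying ${\rm det}_Q$. The one delicate point requiring care is the verification that the pullback of the translation-invariant metric on $T\pi$ along $[k]$ is precisely a scalar multiple of the original metric; this rests on the translation invariance of ${\rm c}_1(\overline L)$ and the relation $[k]^*{\rm c}_1(\overline L) = k^2\,{\rm c}_1(\overline L)$.
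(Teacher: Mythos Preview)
Your proof is correct and follows essentially the same approach as the paper: both combine Lemma~\ref{2}, Lemma~\ref{3} (applied to $x=[k]^*\overline{L}$ and to $x=[k]^*\overline{L}\cdot\eta$), the projection formula for $[k]_*$, and the vanishing $\eta^2=0$ coming from ${\rm dd}^c\eta=0$. Your write-up is slightly more explicit in isolating the residual $\eta^2$ term and in justifying ${\rm dd}^c\eta=0$ via the constant rescaling of the metric on $T\pi$, but the logical skeleton is the same as the paper's four-line computation.
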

\begin{proof}
Using Lemma~\ref{2}, Lemma~\ref{3} and the fact that ${\rm dd}^c\eta=0$, we compute
\begin{align*}
\pi_*([k]^*\overline{L})&=\pi_*[k]_*([k]^*\overline{L}\otimes
\overline{O}_{\widetilde{Z}_{g,d,1}})-\pi_*([k]^*\overline{L}\cdot\eta)\\
&=\pi_*(\overline{L}\otimes
[k]_*\overline{O}_{\widetilde{Z}_{g,d,1}})-\pi_*[k]_*([k]^*\overline{L}\cdot\eta)\\
&=\pi_*(k^{2g}\cdot\overline{L})+\pi_*(\overline{L}\cdot[k]_*\eta)-\pi_*(\overline{L}\cdot[k]_*\eta)\\
&=k^{2g}\cdot\pi_*\overline{L}
\end{align*}
which holds in $\widehat{K_0}(\widetilde{H}_{g,d,1})_\Q$. Applying the morphism $\widehat{{\rm det}}$ to both two sides, we get the desired identity.
\end{proof}

Thanks to Proposition~\ref{compare}, we finally conclude that
\begin{displaymath}
(k^2-1)\cdot{\rm det}_Q(\pi_*\overline{L})+\frac{d(k^2-1)}{2}\cdot{\rm
det}(e^*\overline{\Omega}_\pi))=0
\end{displaymath} in $\widehat{{\rm
Pic}}(\widetilde{H}_{g,d,1})_\Q$. In other words,
$\Delta(\overline{L})=0$ in $\widehat{{\rm Pic}}(\widetilde{H}_{g,d,1})_\Q$, which means that
$\Delta(\overline{L})$ has a torsion class in $\widehat{{\rm Pic}}(\widetilde{H}_{g,d,1})$.

We claim that the bound of the order of $\Delta(\overline{L})$ in $\widehat{{\rm Pic}}(\widetilde{H}_{g,d,1})$ can be chosen to be independent of $L$. Indeed, Lemma~\ref{2} has nothing to do with $L$, then we may choose positive integer $n_k$ (only depends on $k$) such that
\begin{displaymath}
n_k\cdot(k^2-1)\cdot{\rm det}_Q(\pi_*\overline{L})+\frac{d(k^2-1)}{2}\cdot{\rm
det}(e^*\overline{\Omega}_\pi))=0
\end{displaymath} in $\widehat{{\rm
Pic}}(\widetilde{H}_{g,d,1})[1/k]$. This means
$\Delta(\overline{L})^{n_k(k^2-1)}$ is a $k^\infty$-torsion in
$\widehat{{\rm Pic}}(\widetilde{H}_{g,d,1})$. Let $k=2$, we see
that $\Delta(\overline{L})^{3n_2}$ is a $2^\infty$-torsion in
$\widehat{{\rm Pic}}(\widetilde{H}_{g,d,1})$. Let $k=3$, we see
that $\Delta(\overline{L})^{8n_3}$ is a $3^\infty$-torsion in
$\widehat{{\rm Pic}}(\widetilde{H}_{g,d,1})$. Hence
$\Delta(\overline{L})^{24n_2n_3}$ is actually trivial in
$\widehat{{\rm Pic}}(\widetilde{H}_{g,d,1})$.

\begin{rem}\label{order}
According to the same reasoning as above, an explicit bound of the order of $\Delta(\overline{L})$ in $\widehat{{\rm Pic}}(\widetilde{H}_{g,d,1})$ can be determined if one can show that $F^n\widehat{K_0}(\widetilde{Z}_{g,d,1})$ vanishes for an effective sufficiently large number $n$.
\end{rem}

\hspace{5cm} \hrulefill\hspace{5.5cm}

Max-Planck Institute for Mathematics, Office 222, Vivatsgasse 7,
53111 Bonn, Germany

E-mail: ningxiaotang@gmail.com


\begin{thebibliography}{200}
\bibitem[BGV]{BGV}
N. Berline, E. Getzler and M. Vergne, \emph{Heat kernels and the
Dirac operator}, Grundlehren der Math. Wiss. \textbf{298},
Springer-Verlag, 1992.

\bibitem[BFL]{BFL}
J. I. Burgos Gil, G. Freixas i Montplet, R. Li\c{t}canu \emph{Generalized holomorphic analytic torsion},
available on arXiv:1011.3702v2 [math.DG].

\bibitem[BK]{BK}
J.-M. Bismut and K. Koehler, \emph{Higher analytic torsion and
anomaly formulas}, J. Algebraic Geom. \textbf{1}(1992), 647-684.

\bibitem[Chai]{Chai}
C.-L. Chai, \emph{Siegel Moduli Schemes and Their
Compactifications over $\C$}, Arithmetic geometry (Storrs, Conn.,
1984), 231-251, Springer, New York, 1986.

\bibitem[Chai1]{Chai1}
C.-L. Chai, \emph{Compactification of Siegel moduli schemes},
London Mathematical Society Lecture Note Series, vol. 107,
Cambridge University Press, Cambridge, 1985.

\bibitem[FC]{FC}
G. Faltings and C.-L. Chai, \emph{Degeneration of Abelian
Varieties}, with an appendix by David Mumford, Ergebnisse der
Mathematik und ihrer Grenzgebiete (3), vol. 22, Springer, Berlin,
1990.

\bibitem[FGA]{FGA}
Fundamental algebraic geometry, Math. Surveys Monogr., 123, Amer.
Math. Soc., Providence, RI, 2005.

\bibitem[FK]{FK}
E. Freitag and R. Kiehl, \emph{\'{E}tale Cohomology and the Weil
Conjecture}, Ergebnisse der Mathematik und ihrer Grenzgebiete (3),
vol. 13, Springer, Berlin, 1988.

\bibitem[Har]{Ha}
R. Hartshorne, \emph{Algebraic Geometry}, Graduate Texts in
Mathematics \textbf{52}, Springer-Verlag, New York, 1977.

\bibitem[K\"{o}]{Ko}
B. K\"{o}ck, \emph{The Grothendieck-Riemann-Roch theorem for group
scheme actions}, Ann. Sci. Ecole Norm. Sup. \textbf{31}(1998),
4\`{e}me s\'{e}rie, 415-458.

\bibitem[SGA6]{GBI}
A. Grothendieck, P. Berthelot and L. Illusie, SGA6,
\emph{Th\'{e}orie des intersections et th\'{e}or\`{e}me de
Riemann-Roch}, Lecture Notes in Maththematics \textbf{225},
Springer-Verlag, Berlin-Heidelberg-New York, 1971.

\bibitem[FL]{FL}
W. Fulton and S. Lang, \emph{Riemann-Roch Algebra}, Grundlehren Math. Wiss. \textbf{277},
Springer-Verlag, New York, 1985.

\bibitem[GS]{GS2}
H. Gillet and C. Soul\'{e}, \emph{Characteristic classes for
algebraic vector bundles with hermitian metrics I, II}, Ann. of
Math. \textbf{131}(1990), 163-203 and 205-238.

\bibitem[GS1]{GS5}
H. Gillet and C. Soul\'{e}, \emph{An arithmetic Riemann-Roch
theorem}, Inventiones Math. \textbf{110}(1992), 473-543.

\bibitem[Ma]{Ma}
X. Ma, \emph{Formes de torsion analytique et familles de
submersions I}, Bull. Soc. Math. France, \textbf{127}(1999),
541-562.

\bibitem[MB]{MB}
L. Moret-Bailly, \emph{Pinceaux de vari\'{e}e\'{e}s
ab\'{e}liennes}, Ast\'{e}risque \textbf{129}(1985).

\bibitem[MB1]{MB1}
L. Moret-Bailly, \emph{Sur l'\'{e}quation fonctionnelle de la
fonction th\^{e}ta de Riemann}, Compositio Math.
\textbf{75}(1990), 203-217.

\bibitem[MFK]{MFK}
D. Mumford, J. Fogarty and F. Kirwan, \emph{Geometric Invariant
Theory}, third edition, Ergebnisse der Mathematik und ihrer
Grenzgebiete (2), vol. 34, Springer, Berlin, 1994.

\bibitem[Mum]{Mum}
D. Mumford, \emph{Abelian varieties}, Tata Institute of
Fundamental Research Studies in Mathematics, vol. 5, Oxford
University Press, London, 1970.

\bibitem[Mil]{Mil}
J. S. Milne, \emph{\'{E}tale Cohomology}, Princeton Mathematical
Series, vol. 33, Princeton University Press, New Jersey, 1980.

\bibitem[MR]{MR}
V. Maillot and D. R\"{o}ssler, \emph{On the determinant bundles of
abelian schemes}, Compositio Math. \textbf{144}(2008), 495-502.

\bibitem[MR1]{MR1}
V. Maillot and D. R\"{o}ssler, \emph{On a canonical class of Green
currents for the unit sections of abelian schemes}, available on
http://www.math.univ-toulouse.fr/~rossler/mypage/pdf-files/rrm.pdf.

\bibitem[Pol]{Pol}
A. Polishchuk, \emph{Determinant bundles for abelian schemes},
Compositio Math. \textbf{121}(2000), 221-245.

\bibitem[Ray]{Ray}
M. Raynaud, \emph{Sp\'{e}cilisation du foncteur de Picard}, Inst.
Hautes \'{E}tudes Sci. Publ. Math. \textbf{38}(1970), 27-76.

\bibitem[Roe]{Roe}
D. Roessler, \emph{An Adams-Riemann-Roch theorem in Arakelov
geometry}, Duke Math. J. \textbf{96}(1999), no. 1, 61-126.

\bibitem[Roe1]{Roe1}
D. Roessler, \emph{Lambda structure on arithmetic Grothendieck
groups}, Israel J. Math. \textbf{122}(2001), 279-304.

\bibitem[RSS]{RSS}
M. Rapoport, N. Schappacher, P. Schneider, Eds., \emph{Beilinson's conjectures on special values of $L$-functions}, Perspect. Math. \textbf{4}, Academic Press, Boston, 1988.

\bibitem[SABK]{SABK}
C. Soul\'{e}, \emph{Lectures on Arakelov Geometry}, Cambridge
Stud. Adv. Math. \textbf{33}, Cambridge Univ. Press, Cambridge,
1991.

\bibitem[Vi]{Vi}
A. Vistoli, \emph{Grothendieck topologies, fibred categories and
descent theory}, Fundamental algebraic geometry, 1-104, Math.
Surveys Monogr., 123, Amer. Math. Soc., Providence, RI, 2005.

\end{thebibliography}
\end{document}